\documentclass[12pt,a4paper,oneside,english]{amsart}
\usepackage[T1]{fontenc}
\usepackage[latin9]{inputenc}
\usepackage{babel}
\usepackage{varioref}
\usepackage{amsthm}
\usepackage{amsbsy}
\usepackage{amstext}
\usepackage{amssymb}
\usepackage{esint}
\usepackage[unicode=true,pdfusetitle,
 bookmarks=true,bookmarksnumbered=false,bookmarksopen=false,
 breaklinks=false,pdfborder={0 0 1},backref=false,colorlinks=false]
 {hyperref}
\hypersetup{
 a4paper}
\usepackage{breakurl}

\makeatletter


\newcommand{\noun}[1]{\textsc{#1}}

\numberwithin{equation}{section}
\numberwithin{figure}{section}



\usepackage{amsthm}\usepackage{amsbsy}\usepackage{amstext}
\usepackage[a4paper,margin=2cm]{geometry}

\numberwithin{equation}{section}
\numberwithin{figure}{section}
\theoremstyle{plain}
\newtheorem{thm}{\protect\theoremname}
  \theoremstyle{definition}
  
  \theoremstyle{plain}
  \newtheorem{assumption}[thm]{\protect\assumptionname}
  \theoremstyle{plain}
  
  \theoremstyle{plain}


\providecommand{\assumptionname}{Assumption}
  \providecommand{\definitionname}{Definition}
  \providecommand{\lemmaname}{Lemma}
  \providecommand{\propositionname}{Proposition}
\providecommand{\theoremname}{Theorem}

\setcounter{MaxMatrixCols}{10}

\newtheorem{theorem}{Theorem}
\newtheorem{acknowledgement}[theorem]{Acknowledgement}

\newtheorem{axiom}[theorem]{Axiom}

\newtheorem{conjecture}[theorem]{Conjecture}
\newtheorem{corollary}[theorem]{Corollary}

\newtheorem{definition}[theorem]{Definition}
\newtheorem{example}[theorem]{Example}
\newtheorem{exercise}[theorem]{Exercise}
\newtheorem{lemma}[theorem]{Lemma}

\newtheorem{proposition}[theorem]{Proposition}
\newtheorem{remark}[theorem]{Remark}

\typeout{TCILATEX Macros for Scientific Word 2.5 <22 Dec 95>.}
\typeout{NOTICE:  This macro file is NOT proprietary and may be 
freely copied and distributed.}
\makeatletter
%
\newcount\@hour\newcount\@minute\chardef\@x10\chardef\@xv60
\def\tcitime{
\def\@time{%
  \@minute\time\@hour\@minute\divide\@hour\@xv
  \ifnum\@hour<\@x 0\fi\the\@hour:%
  \multiply\@hour\@xv\advance\@minute-\@hour
  \ifnum\@minute<\@x 0\fi\the\@minute
  }}%

\@ifundefined{hyperref}{}{}

\@ifundefined{qExtProgCall}{\def\qExtProgCall#1#2#3#4#5#6{\relax}}{}
%
%
%
%
\def\QCTOpt[#1]#2{%
  \def\QCTOptB{#1}
  \def\QCTOptA{#2}
}
\def\QCTNOpt#1{%
  \def\QCTOptA{#1}
  \let\QCTOptB\empty
}
\def\Qct{%
  \@ifnextchar[{%
    \QCTOpt}{\QCTNOpt}
}
\def\QCBOpt[#1]#2{%
  \def\QCBOptB{#1}
  \def\QCBOptA{#2}
}
\def\QCBNOpt#1{%
  \def\QCBOptA{#1}
  \let\QCBOptB\empty
}
\def\Qcb{%
  \@ifnextchar[{%
    \QCBOpt}{\QCBNOpt}
}
\def\PrepCapArgs{%
  \ifx\QCBOptA\empty
    \ifx\QCTOptA\empty
      {}%
    \else
      \ifx\QCTOptB\empty
        {\QCTOptA}%
      \else
        [\QCTOptB]{\QCTOptA}%
      \fi
    \fi
  \else
    \ifx\QCBOptA\empty
      {}%
    \else
      \ifx\QCBOptB\empty
        {\QCBOptA}%
      \else
        [\QCBOptB]{\QCBOptA}%
      \fi
    \fi
  \fi
}
\newcount\GRAPHICSTYPE
\GRAPHICSTYPE=\z@
\def\GRAPHICSPS#1{%
 \ifcase\GRAPHICSTYPE
   \special{ps: #1}%
 \or
   \special{language "PS", include "#1"}%
 \fi
}%
%
%
%
\def\graffile#1#2#3#4{%
    \leavevmode
    \raise -#4 \BOXTHEFRAME{%
        \hbox to #2{\raise #3\hbox to #2{\null #1\hfil}}}%
}%
%
\def\draftbox#1#2#3#4{%
 \leavevmode\raise -#4 \hbox{%
  \frame{\rlap{\protect\tiny #1}\hbox to #2%
   {\vrule height#3 width\z@ depth\z@\hfil}%
  }%
 }%
}%
\newcount\draft
\draft=\z@

\newif\ifwasdraft
\wasdraftfalse

\def\GRAPHIC#1#2#3#4#5{%
 \ifnum\draft=\@ne\draftbox{#2}{#3}{#4}{#5}%
  \else\graffile{#1}{#3}{#4}{#5}%
  \fi
 }%
\def\addtoLaTeXparams#1{%
    \edef\LaTeXparams{\LaTeXparams #1}}%
%

\newif\ifBoxFrame \BoxFramefalse
\newif\ifOverFrame \OverFramefalse
\newif\ifUnderFrame \UnderFramefalse

\def\BOXTHEFRAME#1{%
   \hbox{%
      \ifBoxFrame
         \frame{#1}%
      \else
         {#1}%
      \fi
   }%
}

\def\doFRAMEparams#1{\BoxFramefalse\OverFramefalse\UnderFramefalse\readFRAMEparams#1\end}%
\def\readFRAMEparams#1{%
 \ifx#1\end%
  \let\next=\relax
  \else
  \ifx#1i\dispkind=\z@\fi
  \ifx#1d\dispkind=\@ne\fi
  \ifx#1f\dispkind=\tw@\fi
  \ifx#1t\addtoLaTeXparams{t}\fi
  \ifx#1b\addtoLaTeXparams{b}\fi
  \ifx#1p\addtoLaTeXparams{p}\fi
  \ifx#1h\addtoLaTeXparams{h}\fi
  \ifx#1X\BoxFrametrue\fi
  \ifx#1O\OverFrametrue\fi
  \ifx#1U\UnderFrametrue\fi
  \ifx#1w
    \ifnum\draft=1\wasdrafttrue\else\wasdraftfalse\fi
    \draft=\@ne
  \fi
  \let\next=\readFRAMEparams
  \fi
 \next
 }%
%

\def\IFRAME#1#2#3#4#5#6{%
      \bgroup
      \let\QCTOptA\empty
      \let\QCTOptB\empty
      \let\QCBOptA\empty
      \let\QCBOptB\empty
      #6%
      \parindent=0pt%
      \leftskip=0pt
      \rightskip=0pt
      \setbox0 = \hbox{\QCBOptA}%
      \@tempdima = #1\relax
      \ifOverFrame
          \typeout{This is not implemented yet}%
          \show\HELP
      \else
         \ifdim\wd0>\@tempdima
            \advance\@tempdima by \@tempdima
            \ifdim\wd0 >\@tempdima
               \textwidth=\@tempdima
               \setbox1 =\vbox{%
                  \noindent\hbox to \@tempdima{\hfill\GRAPHIC{#5}{#4}{#1}{#2}{#3}\hfill}\\%
                  \noindent\hbox to \@tempdima{\parbox[b]{\@tempdima}{\QCBOptA}}%
               }%
               \wd1=\@tempdima
            \else
               \textwidth=\wd0
               \setbox1 =\vbox{%
                 \noindent\hbox to \wd0{\hfill\GRAPHIC{#5}{#4}{#1}{#2}{#3}\hfill}\\%
                 \noindent\hbox{\QCBOptA}%
               }%
               \wd1=\wd0
            \fi
         \else
            \ifdim\wd0>0pt
              \hsize=\@tempdima
              \setbox1 =\vbox{%
                \unskip\GRAPHIC{#5}{#4}{#1}{#2}{0pt}%
                \break
                \unskip\hbox to \@tempdima{\hfill \QCBOptA\hfill}%
              }%
              \wd1=\@tempdima
           \else
              \hsize=\@tempdima
              \setbox1 =\vbox{%
                \unskip\GRAPHIC{#5}{#4}{#1}{#2}{0pt}%
              }%
              \wd1=\@tempdima
           \fi
         \fi
         \@tempdimb=\ht1
         \advance\@tempdimb by \dp1
         \advance\@tempdimb by -#2%
         \advance\@tempdimb by #3%
         \leavevmode
         \raise -\@tempdimb \hbox{\box1}%
      \fi
      \egroup%
}%
%
\def\DFRAME#1#2#3#4#5{%
 \begin{center}
     \let\QCTOptA\empty
     \let\QCTOptB\empty
     \let\QCBOptA\empty
     \let\QCBOptB\empty
     \ifOverFrame 
        #5\QCTOptA\par
     \fi
     \GRAPHIC{#4}{#3}{#1}{#2}{\z@}
     \ifUnderFrame 
        \nobreak\par #5\QCBOptA
     \fi
 \end{center}%
 }%
%
\def\FFRAME#1#2#3#4#5#6#7{%
 \begin{figure}[#1]%
  \let\QCTOptA\empty
  \let\QCTOptB\empty
  \let\QCBOptA\empty
  \let\QCBOptB\empty
  \ifOverFrame
    #4
    \ifx\QCTOptA\empty
    \else
      \ifx\QCTOptB\empty
        \caption{\QCTOptA}%
      \else
        \caption[\QCTOptB]{\QCTOptA}%
      \fi
    \fi
    \ifUnderFrame\else
      \label{#5}%
    \fi
  \else
    \UnderFrametrue%
  \fi
  \begin{center}\GRAPHIC{#7}{#6}{#2}{#3}{\z@}\end{center}%
  \ifUnderFrame
    #4
    \ifx\QCBOptA\empty
      \caption{}%
    \else
      \ifx\QCBOptB\empty
        \caption{\QCBOptA}%
      \else
        \caption[\QCBOptB]{\QCBOptA}%
      \fi
    \fi
    \label{#5}%
  \fi
  \end{figure}%
 }%
%
%
%
%
%
\newcount\dispkind%

\def\makeactives{
  \catcode`\"=\active
  \catcode`\;=\active
  \catcode`\:=\active
  \catcode`\'=\active
  \catcode`\~=\active
}
\bgroup
   \makeactives
   \gdef\activesoff{%
      \def"{\string"}
      \def;{\string;}
      \def:{\string:}
      \def'{\string'}
      \def~{\string~}
    }
\egroup

\def\FRAME#1#2#3#4#5#6#7#8{%
 \bgroup
 \@ifundefined{bbl@deactivate}{}{\activesoff}
 \ifnum\draft=\@ne
   \wasdrafttrue
 \else
   \wasdraftfalse%
 \fi
 \def\LaTeXparams{}%
 \dispkind=\z@
 \def\LaTeXparams{}%
 \doFRAMEparams{#1}%
 \ifnum\dispkind=\z@\IFRAME{#2}{#3}{#4}{#7}{#8}{#5}\else
  \ifnum\dispkind=\@ne\DFRAME{#2}{#3}{#7}{#8}{#5}\else
   \ifnum\dispkind=\tw@
    \edef\@tempa{\noexpand\FFRAME{\LaTeXparams}}%
    \@tempa{#2}{#3}{#5}{#6}{#7}{#8}%
    \fi
   \fi
  \fi
  \ifwasdraft\draft=1\else\draft=0\fi{}%
  \egroup
 }%
%

\def\TEXUX#1{"texux"}

%
%
%
%
%
%
%
%
%

%
\long\def\QQQ#1#2{%
     \long\expandafter\def\csname#1\endcsname{#2}}%
\@ifundefined{QTP}{\def\QTP#1{}}{}
\@ifundefined{QEXCLUDE}{\def\QEXCLUDE#1{}}{}
\@ifundefined{Qlb}{}{}
\@ifundefined{Qlt}{}{}
\long\def\QQA#1#2{}%
\def\QTR#1#2{{\csname#1\endcsname #2}}
\def\EXPAND#1[#2]#3{}%
\def\NOEXPAND#1[#2]#3{}%
\def\LaTeXparent#1{}%
\def\ChildStyles#1{}%
\def\ChildDefaults#1{}%
\def\QTagDef#1#2#3{}%
%
\@ifundefined{StyleEditBeginDoc}{}{}
%
\def\QQfnmark#1{\footnotemark}

%
\def\makeatletter\input gnuindex.sty\makeatother\makeindex{\makeatletter\input gnuindex.sty\makeatother\makeindex}%
\@ifundefined{INDEX}{\def\INDEX#1#2{}{}}{}%
\@ifundefined{SUBINDEX}{\def\SUBINDEX#1#2#3{}{}{}}{}%
\@ifundefined{initial}%
   {\def\initial#1{\bigbreak{\raggedright\large\bf #1}\kern 2\p@\penalty3000}}%
   {}%
\@ifundefined{entry}{}{}%
\@ifundefined{primary}{}{}%
\@ifundefined{secondary}{}{}%
\@ifundefined{ZZZ}{}{\makeatletter\input gnuindex.sty\makeatother\makeindex\makeatletter}%
%
\@ifundefined{abstract}{%
 \def\abstract{%
  \if@twocolumn
   \section*{Abstract (Not appropriate in this style!)}%
   \else \small 
   \begin{center}{\bf Abstract\vspace{-.5em}\vspace{\z@}}\end{center}%
   \quotation 
   \fi
  }%
 }{%
 }%
\@ifundefined{endabstract}{\def\endabstract
  {\if@twocolumn\else\endquotation\fi}}{}%
\@ifundefined{maketitle}{\def\maketitle#1{}}{}%
\@ifundefined{affiliation}{\def\affiliation#1{}}{}%
\@ifundefined{proof}{}{}%
\@ifundefined{endproof}{}{}%
\@ifundefined{newfield}{\def\newfield#1#2{}}{}%
\@ifundefined{chapter}{\def\chapter#1{\par(Chapter head:)#1\par }%
 \newcount\c@chapter}{}%
\@ifundefined{part}{\def\part#1{\par(Part head:)#1\par }}{}%
\@ifundefined{section}{\def\section#1{\par(Section head:)#1\par }}{}%
\@ifundefined{subsection}{\def\subsection#1%
 {\par(Subsection head:)#1\par }}{}%
\@ifundefined{subsubsection}{\def\subsubsection#1%
 {\par(Subsubsection head:)#1\par }}{}%
\@ifundefined{paragraph}{\def\paragraph#1%
 {\par(Subsubsubsection head:)#1\par }}{}%
\@ifundefined{subparagraph}{\def\subparagraph#1%
 {\par(Subsubsubsubsection head:)#1\par }}{}%
\@ifundefined{therefore}{}{}%
\@ifundefined{backepsilon}{}{}%
\@ifundefined{yen}{}{}%
\@ifundefined{registered}{%
   \def\registered{\relax\ifmmode{}\r@gistered
                    \else$\m@th\r@gistered$\fi}%
 \def\r@gistered{^{\ooalign
  {\hfil\raise.07ex\hbox{$\scriptstyle\rm\text{R}$}\hfil\crcr
  \mathhexbox20D}}}}{}%
\@ifundefined{Eth}{}{}%
\@ifundefined{eth}{}{}%
\@ifundefined{Thorn}{}{}%
\@ifundefined{thorn}{}{}%
%
\@ifundefined{degree}{}{}%
%
\newdimen\theight
\def\Column{%
 \vadjust{\setbox\z@=\hbox{\scriptsize\quad\quad tcol}%
  \theight=\ht\z@\advance\theight by \dp\z@\advance\theight by \lineskip
  \kern -\theight \vbox to \theight{%
   \rightline{\rlap{\box\z@}}%
   \vss
   }%
  }%
 }%
\def\qed{%
 \ifhmode\unskip\nobreak\fi\ifmmode\ifinner\else\hskip5\p@\fi\fi
 \hbox{\hskip5\p@\vrule width4\p@ height6\p@ depth1.5\p@\hskip\p@}%
 }%
\def\miss{\hbox{\vrule height2\p@ width 2\p@ depth\z@}}%
%
%
\def\tcol#1{{\baselineskip=6\p@ \vcenter{#1}} \Column}  %
%
%
%
%
%

\def\newfmtname{LaTeX2e}
\def\chkcompat{%
   \if@compatibility
   \else
     \usepackage{latexsym}
   \fi
}

\ifx\fmtname\newfmtname
  \DeclareOldFontCommand{\rm}{\normalfont\rmfamily}{\mathrm}
  \DeclareOldFontCommand{\sf}{\normalfont\sffamily}{\mathsf}
  \DeclareOldFontCommand{\tt}{\normalfont\ttfamily}{\mathtt}
  \DeclareOldFontCommand{\bf}{\normalfont\bfseries}{\mathbf}
  \DeclareOldFontCommand{\it}{\normalfont\itshape}{\mathit}
  \DeclareOldFontCommand{\sl}{\normalfont\slshape}{\@nomath\sl}
  \DeclareOldFontCommand{\sc}{\normalfont\scshape}{\@nomath\sc}
  \chkcompat
\fi

%

\def\alpha{\Greekmath 010B }%
\def\beta{\Greekmath 010C }%
\def\gamma{\Greekmath 010D }%
\def\delta{\Greekmath 010E }%
\def\epsilon{\Greekmath 010F }%
\def\zeta{\Greekmath 0110 }%
\def\eta{\Greekmath 0111 }%
\def\theta{\Greekmath 0112 }%
\def\iota{\Greekmath 0113 }%
\def\kappa{\Greekmath 0114 }%
\def\lambda{\Greekmath 0115 }%
\def\mu{\Greekmath 0116 }%
\def\nu{\Greekmath 0117 }%
\def\xi{\Greekmath 0118 }%
\def\pi{\Greekmath 0119 }%
\def\rho{\Greekmath 011A }%
\def\sigma{\Greekmath 011B }%
\def\tau{\Greekmath 011C }%
\def\upsilon{\Greekmath 011D }%
\def\phi{\Greekmath 011E }%
\def\chi{\Greekmath 011F }%
\def\psi{\Greekmath 0120 }%
\def\omega{\Greekmath 0121 }%
\def\varepsilon{\Greekmath 0122 }%
\def\vartheta{\Greekmath 0123 }%
\def\varpi{\Greekmath 0124 }%
\def\varrho{\Greekmath 0125 }%
\def\varsigma{\Greekmath 0126 }%
\def\varphi{\Greekmath 0127 }%

\def\nabla{\Greekmath 0272 }
\def\FindBoldGroup{%
   {\setbox0=\hbox{$\mathbf{x\global\edef\theboldgroup{\the\mathgroup}}$}}%
}

\def\Greekmath#1#2#3#4{%
    \if@compatibility
        \ifnum\mathgroup=\symbold
           \mathchoice{\mbox{\boldmath$\displaystyle\mathchar"#1#2#3#4$}}%
                      {\mbox{\boldmath$\textstyle\mathchar"#1#2#3#4$}}%
                      {\mbox{\boldmath$\scriptstyle\mathchar"#1#2#3#4$}}%
                      {\mbox{\boldmath$\scriptscriptstyle\mathchar"#1#2#3#4$}}%
        \else
           \mathchar"#1#2#3#4%
        \fi 
    \else 
        \FindBoldGroup
        \ifnum\mathgroup=\theboldgroup 
           \mathchoice{\mbox{\boldmath$\displaystyle\mathchar"#1#2#3#4$}}%
                      {\mbox{\boldmath$\textstyle\mathchar"#1#2#3#4$}}%
                      {\mbox{\boldmath$\scriptstyle\mathchar"#1#2#3#4$}}%
                      {\mbox{\boldmath$\scriptscriptstyle\mathchar"#1#2#3#4$}}%
        \else
           \mathchar"#1#2#3#4%
        \fi     	    
	  \fi}

\newif\ifGreekBold  \GreekBoldfalse
\let\SAVEPBF=\pbf
\def\pbf{\GreekBoldtrue\SAVEPBF}%

\@ifundefined{theorem}{\newtheorem{theorem}{Theorem}}{}
\@ifundefined{lemma}{\newtheorem{lemma}[theorem]{Lemma}}{}
\@ifundefined{corollary}{\newtheorem{corollary}[theorem]{Corollary}}{}
\@ifundefined{conjecture}{}{}
\@ifundefined{proposition}{\newtheorem{proposition}[theorem]{Proposition}}{}
\@ifundefined{axiom}{}{}
\@ifundefined{remark}{\newtheorem{remark}{Remark}}{}
\@ifundefined{example}{\newtheorem{example}{Example}}{}
\@ifundefined{exercise}{}{}
\@ifundefined{definition}{\newtheorem{definition}{Definition}}{}

\@ifundefined{mathletters}{%
  \newcounter{equationnumber}  
  \def\mathletters{%
     \addtocounter{equation}{1}
     \edef\@currentlabel{\theequation}%
     \setcounter{equationnumber}{\c@equation}
     \setcounter{equation}{0}%
     \edef\theequation{\@currentlabel\noexpand\alph{equation}}%
  }
  
}{}

\@ifundefined{BibTeX}{%
    \def\BibTeX{{\rm B\kern-.05em{\sc i\kern-.025em b}\kern-.08em
                 T\kern-.1667em\lower.7ex\hbox{E}\kern-.125emX}}}{}%
\@ifundefined{AmS}%
    {\def\AmS{{\protect\usefont{OMS}{cmsy}{m}{n}%
                A\kern-.1667em\lower.5ex\hbox{M}\kern-.125emS}}}{}%
\@ifundefined{AmSTeX}{}{}%
%

%
%
\ifx\ds@amstex\relax
   \message{amstex already loaded}\makeatother 
\else
   \@ifpackageloaded{amstex}%
      {\message{amstex already loaded}\makeatother }
      {}
   \@ifpackageloaded{amsgen}%
      {\message{amsgen already loaded}\makeatother }
      {}
\fi
%
%
%
%
\let\DOTSI\relax
\def\RIfM@{\relax\ifmmode}%
\def\FN@{\futurelet\next}%
\newcount\intno@
\def\iint{\DOTSI\intno@\tw@\FN@\ints@}%
\def\iiint{\DOTSI\intno@\thr@@\FN@\ints@}%
\def\iiiint{\DOTSI\intno@4 \FN@\ints@}%
\def\idotsint{\DOTSI\intno@\z@\FN@\ints@}%
\def\ints@{\findlimits@\ints@@}%
\newif\iflimtoken@
\newif\iflimits@
\def\findlimits@{\limtoken@true\ifx\next\limits\limits@true
 \else\ifx\next\nolimits\limits@false\else
 \limtoken@false\ifx\ilimits@\nolimits\limits@false\else
 \ifinner\limits@false\else\limits@true\fi\fi\fi\fi}%
\def\multint@{\int\ifnum\intno@=\z@\intdots@                          
 \else\intkern@\fi                                                    
 \ifnum\intno@>\tw@\int\intkern@\fi                                   
 \ifnum\intno@>\thr@@\int\intkern@\fi                                 
 \int}
\def\multintlimits@{\intop\ifnum\intno@=\z@\intdots@\else\intkern@\fi
 \ifnum\intno@>\tw@\intop\intkern@\fi
 \ifnum\intno@>\thr@@\intop\intkern@\fi\intop}%
\def\intic@{%
    \mathchoice{\hskip.5em}{\hskip.4em}{\hskip.4em}{\hskip.4em}}%
\def\negintic@{\mathchoice
 {\hskip-.5em}{\hskip-.4em}{\hskip-.4em}{\hskip-.4em}}%
\def\ints@@{\iflimtoken@                                              
 \def\ints@@@{\iflimits@\negintic@
   \mathop{\intic@\multintlimits@}\limits                             
  \else\multint@\nolimits\fi                                          
  \eat@}
 \else                                                                
 \def\ints@@@{\iflimits@\negintic@
  \mathop{\intic@\multintlimits@}\limits\else
  \multint@\nolimits\fi}\fi\ints@@@}%
\def\intkern@{\mathchoice{\!\!\!}{\!\!}{\!\!}{\!\!}}%
\def\plaincdots@{\mathinner{\cdotp\cdotp\cdotp}}%
\def\intdots@{\mathchoice{\plaincdots@}%
 {{\cdotp}\mkern1.5mu{\cdotp}\mkern1.5mu{\cdotp}}%
 {{\cdotp}\mkern1mu{\cdotp}\mkern1mu{\cdotp}}%
 {{\cdotp}\mkern1mu{\cdotp}\mkern1mu{\cdotp}}}%
%
%
%
\def\RIfM@{\relax\protect\ifmmode}
\def\text{\RIfM@\expandafter\text@\else\expandafter\mbox\fi}
\let\nfss@text\text
\def\text@#1{\mathchoice
   {\textdef@\displaystyle\f@size{#1}}%
   {\textdef@\textstyle\tf@size{\firstchoice@false #1}}%
   {\textdef@\textstyle\sf@size{\firstchoice@false #1}}%
   {\textdef@\textstyle \ssf@size{\firstchoice@false #1}}%
   \glb@settings}

\def\textdef@#1#2#3{\hbox{{%
                    \everymath{#1}%
                    \let\f@size#2\selectfont
                    #3}}}
\newif\iffirstchoice@
\firstchoice@true
%
%
%
%
%
\def\Let@{\relax\iffalse{\fi\let\\=\cr\iffalse}\fi}%
\def\vspace@{\def\vspace##1{\crcr\noalign{\vskip##1\relax}}}%
\def\multilimits@{\bgroup\vspace@\Let@
 \baselineskip\fontdimen10 \scriptfont\tw@
 \advance\baselineskip\fontdimen12 \scriptfont\tw@
 \lineskip\thr@@\fontdimen8 \scriptfont\thr@@
 \lineskiplimit\lineskip
 \vbox\bgroup\ialign\bgroup\hfil$\m@th\scriptstyle{##}$\hfil\crcr}%
\def\Sb{_\multilimits@}%
\def\endSb{\crcr\egroup\egroup\egroup}%
\def\Sp{^\multilimits@}%

%
%
%
\newdimen\ex@
\ex@.2326ex
\def\rightarrowfill@#1{$#1\m@th\mathord-\mkern-6mu\cleaders
 \hbox{$#1\mkern-2mu\mathord-\mkern-2mu$}\hfill
 \mkern-6mu\mathord\rightarrow$}%
\def\leftarrowfill@#1{$#1\m@th\mathord\leftarrow\mkern-6mu\cleaders
 \hbox{$#1\mkern-2mu\mathord-\mkern-2mu$}\hfill\mkern-6mu\mathord-$}%
\def\leftrightarrowfill@#1{$#1\m@th\mathord\leftarrow
\mkern-6mu\cleaders
 \hbox{$#1\mkern-2mu\mathord-\mkern-2mu$}\hfill
 \mkern-6mu\mathord\rightarrow$}%
\def\overrightarrow{\mathpalette\overrightarrow@}%
\def\overrightarrow@#1#2{\vbox{\ialign{##\crcr\rightarrowfill@#1\crcr
 \noalign{\kern-\ex@\nointerlineskip}$\m@th\hfil#1#2\hfil$\crcr}}}%

\def\overleftarrow{\mathpalette\overleftarrow@}%
\def\overleftarrow@#1#2{\vbox{\ialign{##\crcr\leftarrowfill@#1\crcr
 \noalign{\kern-\ex@\nointerlineskip}$\m@th\hfil#1#2\hfil$\crcr}}}%
\def\overleftrightarrow{\mathpalette\overleftrightarrow@}%
\def\overleftrightarrow@#1#2{\vbox{\ialign{##\crcr
   \leftrightarrowfill@#1\crcr
 \noalign{\kern-\ex@\nointerlineskip}$\m@th\hfil#1#2\hfil$\crcr}}}%
\def\underrightarrow{\mathpalette\underrightarrow@}%
\def\underrightarrow@#1#2{\vtop{\ialign{##\crcr$\m@th\hfil#1#2\hfil
  $\crcr\noalign{\nointerlineskip}\rightarrowfill@#1\crcr}}}%

\def\underleftarrow{\mathpalette\underleftarrow@}%
\def\underleftarrow@#1#2{\vtop{\ialign{##\crcr$\m@th\hfil#1#2\hfil
  $\crcr\noalign{\nointerlineskip}\leftarrowfill@#1\crcr}}}%
\def\underleftrightarrow{\mathpalette\underleftrightarrow@}%
\def\underleftrightarrow@#1#2{\vtop{\ialign{##\crcr$\m@th
  \hfil#1#2\hfil$\crcr
 \noalign{\nointerlineskip}\leftrightarrowfill@#1\crcr}}}%


\def\qopnamewl@#1{\mathop{\operator@font#1}\nlimits@}
\let\nlimits@\displaylimits
\def\setboxz@h{\setbox\z@\hbox}

\def\varlim@#1#2{\mathop{\vtop{\ialign{##\crcr
 \hfil$#1\m@th\operator@font lim$\hfil\crcr
 \noalign{\nointerlineskip}#2#1\crcr
 \noalign{\nointerlineskip\kern-\ex@}\crcr}}}}

 \def\rightarrowfill@#1{\m@th\setboxz@h{$#1-$}\ht\z@\z@
  $#1\copy\z@\mkern-6mu\cleaders
  \hbox{$#1\mkern-2mu\box\z@\mkern-2mu$}\hfill
  \mkern-6mu\mathord\rightarrow$}
\def\leftarrowfill@#1{\m@th\setboxz@h{$#1-$}\ht\z@\z@
  $#1\mathord\leftarrow\mkern-6mu\cleaders
  \hbox{$#1\mkern-2mu\copy\z@\mkern-2mu$}\hfill
  \mkern-6mu\box\z@$}

\def\projlim{\qopnamewl@{proj\,lim}}
\def\injlim{\qopnamewl@{inj\,lim}}
\def\varinjlim{\mathpalette\varlim@\rightarrowfill@}
\def\varprojlim{\mathpalette\varlim@\leftarrowfill@}
\def\varliminf{\mathpalette\varliminf@{}}
\def\varliminf@#1{\mathop{\underline{\vrule\@depth.2\ex@\@width\z@
   \hbox{$#1\m@th\operator@font lim$}}}}
\def\varlimsup{\mathpalette\varlimsup@{}}
\def\varlimsup@#1{\mathop{\overline
  {\hbox{$#1\m@th\operator@font lim$}}}}

%
%
%
%
%
%
%
%
%
%
%
%
%
%
%
%
%
%
%
%
%
%
%

%
%
%
%
%
%
%
%
%
%
%
%
%
%
%
%
%
%
%
%
%
%

%
%
%
%
%
%
%
%
%
%
%
%
%
%
%
%
%
%
%
%
%
%
%
%
\begingroup \catcode `|=0 \catcode `[= 1
\catcode`]=2 \catcode `\{=12 \catcode `\}=12
\catcode`\\=12 
|gdef|@alignverbatim#1\end{align}[#1|end[align]]
|gdef|@salignverbatim#1\end{align*}[#1|end[align*]]

|gdef|@alignatverbatim#1\end{alignat}[#1|end[alignat]]
|gdef|@salignatverbatim#1\end{alignat*}[#1|end[alignat*]]

|gdef|@xalignatverbatim#1\end{xalignat}[#1|end[xalignat]]
|gdef|@sxalignatverbatim#1\end{xalignat*}[#1|end[xalignat*]]

|gdef|@gatherverbatim#1\end{gather}[#1|end[gather]]
|gdef|@sgatherverbatim#1\end{gather*}[#1|end[gather*]]

|gdef|@gatherverbatim#1\end{gather}[#1|end[gather]]
|gdef|@sgatherverbatim#1\end{gather*}[#1|end[gather*]]

|gdef|@multilineverbatim#1\end{multiline}[#1|end[multiline]]
|gdef|@smultilineverbatim#1\end{multiline*}[#1|end[multiline*]]

|gdef|@arraxverbatim#1\end{arrax}[#1|end[arrax]]
|gdef|@sarraxverbatim#1\end{arrax*}[#1|end[arrax*]]

|gdef|@tabulaxverbatim#1\end{tabulax}[#1|end[tabulax]]
|gdef|@stabulaxverbatim#1\end{tabulax*}[#1|end[tabulax*]]

|endgroup

\def\align{\@verbatim \frenchspacing\@vobeyspaces \@alignverbatim
You are using the "align" environment in a style in which it is not defined.}

\@namedef{align*}{\@verbatim\@salignverbatim
You are using the "align*" environment in a style in which it is not defined.}
\expandafter\let\csname endalign*\endcsname =\endtrivlist

\def\alignat{\@verbatim \frenchspacing\@vobeyspaces \@alignatverbatim
You are using the "alignat" environment in a style in which it is not defined.}

\@namedef{alignat*}{\@verbatim\@salignatverbatim
You are using the "alignat*" environment in a style in which it is not defined.}
\expandafter\let\csname endalignat*\endcsname =\endtrivlist

\def\xalignat{\@verbatim \frenchspacing\@vobeyspaces \@xalignatverbatim
You are using the "xalignat" environment in a style in which it is not defined.}

\@namedef{xalignat*}{\@verbatim\@sxalignatverbatim
You are using the "xalignat*" environment in a style in which it is not defined.}
\expandafter\let\csname endxalignat*\endcsname =\endtrivlist

\def\gather{\@verbatim \frenchspacing\@vobeyspaces \@gatherverbatim
You are using the "gather" environment in a style in which it is not defined.}

\@namedef{gather*}{\@verbatim\@sgatherverbatim
You are using the "gather*" environment in a style in which it is not defined.}
\expandafter\let\csname endgather*\endcsname =\endtrivlist

\def\multiline{\@verbatim \frenchspacing\@vobeyspaces \@multilineverbatim
You are using the "multiline" environment in a style in which it is not defined.}

\@namedef{multiline*}{\@verbatim\@smultilineverbatim
You are using the "multiline*" environment in a style in which it is not defined.}
\expandafter\let\csname endmultiline*\endcsname =\endtrivlist

\def\arrax{\@verbatim \frenchspacing\@vobeyspaces \@arraxverbatim
You are using a type of "array" construct that is only allowed in AmS-LaTeX.}

\def\tabulax{\@verbatim \frenchspacing\@vobeyspaces \@tabulaxverbatim
You are using a type of "tabular" construct that is only allowed in AmS-LaTeX.}

\@namedef{arrax*}{\@verbatim\@sarraxverbatim
You are using a type of "array*" construct that is only allowed in AmS-LaTeX.}
\expandafter\let\csname endarrax*\endcsname =\endtrivlist

\@namedef{tabulax*}{\@verbatim\@stabulaxverbatim
You are using a type of "tabular*" construct that is only allowed in AmS-LaTeX.}
\expandafter\let\csname endtabulax*\endcsname =\endtrivlist


\def\@@eqncr{\let\@tempa\relax
    \ifcase\@eqcnt \def\@tempa{& & &}\or \def\@tempa{& &}%
      \else \def\@tempa{&}\fi
     \@tempa
     \if@eqnsw
        \iftag@
           \@taggnum
        \else
           \@eqnnum\stepcounter{equation}%
        \fi
     \fi
     \global\tag@false
     \global\@eqnswtrue
     \global\@eqcnt\z@\cr}

 \def\endequation{%
     \ifmmode\ifinner 
      \iftag@
        \addtocounter{equation}{-1} 
        $\hfil
           \displaywidth\linewidth\@taggnum\egroup \endtrivlist
        \global\tag@false
        \global\@ignoretrue   
      \else
        $\hfil
           \displaywidth\linewidth\@eqnnum\egroup \endtrivlist
        \global\tag@false
        \global\@ignoretrue 
      \fi
     \else   
      \iftag@
        \addtocounter{equation}{-1} 
        \eqno \hbox{\@taggnum}
        \global\tag@false%
        $$\global\@ignoretrue
      \else
        \eqno \hbox{\@eqnnum}
        $$\global\@ignoretrue
      \fi
     \fi\fi
 } 

 \newif\iftag@ \tag@false
 
 \def\tag{\@ifnextchar*{\@tagstar}{\@tag}}
 \def\@tag#1{%
     \global\tag@true
     \global\def\@taggnum{(#1)}}
 \def\@tagstar*#1{%
     \global\tag@true
     \global\def\@taggnum{#1}%
}


\makeatother

\providecommand{\assumptionname}{Assumption}
  \providecommand{\definitionname}{Definition}
  \providecommand{\lemmaname}{Lemma}
  \providecommand{\propositionname}{Proposition}
\providecommand{\theoremname}{Theorem}

\makeatother

\begin{document}

\author{Peter Friz}

\address{PF is affiliated with WIAS, Mohrenstrasse 39, 10117 Berlin and TU
Berlin, Institut für Mathematik, Strasse des 17. Juni 136, 10623 Berlin.}

\author{Harald Oberhauser}

\address{HO is corresponding author and affiliated with TU Berlin, Institut
für Mathematik, Strasse des 17. Juni 136, 10623 Berlin. }

\email{h.oberhauser@gmail.com}

\title{Rough path stability of (semi-)linear SPDEs}
\begin{abstract}
We give meaning to linear and semi-linear (possibly degenerate) parabolic
partial differential equations with (affine) linear rough path noise
and establish stability in a rough path metric. In the case of enhanced
Brownian motion (Brownian motion with its Lévy area) as rough path
noise the solution coincides with the standard variational solution
of the SPDE.
\end{abstract}
\maketitle

\section{Introduction}

Given a continuous, $d$-dimensional semimartingale $Z=\left(Z^{1},\ldots,Z^{d}\right)$
consider the SPDE
\begin{equation}
du+L\left(t,x,u,Du,D^{2}u\right)dt=\sum_{k=1}^{d}\Lambda_{k}\left(t,x,u,Du\right)\circ dZ_{t}^{k},\label{SPDE}
\end{equation}
with scalar initial data $u\left(0,\cdot\right)=u_{0}\left(\cdot\right)$
on $\mathbb{R}^{n},$ $L$ a (semi-)linear second order operator of
the form
\[
L\left(t,x,r,p,X\right)=-\text{Tr}\left[A\left(t,x\right)\cdot X\right]+b\left(t,x\right)\cdot p+c\left(t,x,r\right)
\]
and $\Lambda$ a collection of first order different operators $\Lambda_{k}=\Lambda_{k}\left(t,x,r,p\right)$
which are affine linear in $r,p$, that is,
\begin{equation}
\Lambda_{k}\left(t,x,r,p\right)=p\cdot\sigma_{k}\left(t,x\right)+r\,\nu_{k}\left(t,x\right)+g_{k}\left(t,x\right),\ \ \ k=1,\dots,d.\label{eq:lambda}
\end{equation}
The contribution of this article is to give meaning to equation (\ref{SPDE})
when $Z\left(\omega\right)$ is replaced by a rough path $\mathbf{z}$
(this is carried out in sections \ref{sec:backg},\ref{sec:trans}
and \ref{sec:rpde}). Our main result as stated and proven in section
\ref{sec:rpde} (in section \ref{sec:backg} we recall $Lip^{\gamma}$-regularity,
rough paths and their metrics and the $BUC$ space of bounded, uniformly
continuous real-valued functions that appear in the theorem below)
is the following

\begin{theorem}Let $p\geq1$. Assume $L$ fulfills assumption \ref{main_assumptions}
and the coefficients of $\Lambda=\left(\Lambda_{1},\dots,\Lambda_{d_{1}+d_{2}+d_{3}}\right)$
fulfill assumption \ref{as:lambda} for some $\gamma>p+2$ (assumption
\ref{main_assumptions}\&\ref{as:lambda} are given in section \vref{sec:rpde}).
Let $u_{0}\in BUC\left(\mathbb{R}^{n}\right)$ and let $\mathbf{z}$
be a geometric $p$-rough path. Then there exists a unique $u=u^{\mathbf{z}}\in BUC\left(\left[0,T\right]\times\mathbb{R}^{n}\right)$
such that for any sequence $\left(z^{\epsilon}\right)_{\epsilon}\subset C^{1}\left(\left[0,T\right],\mathbb{R}^{d}\right)$
such that $z^{\varepsilon}\rightarrow\mathbf{z}$ in $p$-rough path
sense, the viscosity solutions $\left(u^{\varepsilon}\right)\subset BUC\left(\left[0,T\right]\times\mathbb{R}^{n}\right)$
of
\[
\dot{u}^{\varepsilon}+L\left(t,x,u^{\varepsilon},Du^{\varepsilon},D^{2}u^{\varepsilon}\right)=\sum_{k=1}^{d}\Lambda_{k}\left(t,x,u^{\varepsilon},Du^{\varepsilon}\right)\dot{z}_{t}^{k;\varepsilon}\mathbf{,\,\,\,}u^{\epsilon}\left(0,\cdot\right)=u_{0}\left(\cdot\right),
\]
converge locally uniformly to $u^{\mathbf{z}}$. We write formally,
\[
du+L\left(t,x,u,Du,D^{2}u\right)dt=\Lambda\left(t,x,u,Du\right)d\mathbf{z}_{t},\,\,\, u\left(0,\cdot\right)=u_{0}\left(\cdot\right).
\]
Moreover, we have the contraction property
\[
\sup_{\left(t,x\right)\in\mathbb{R}^{n}\times\left[0,T\right]}\left|u^{\mathbf{z}}\left(t,x\right)-\hat{u}^{\mathbf{z}}\left(t,x\right)\right|\leq e^{CT}\sup_{x\in\mathbb{R}^{n}}\left\vert u_{0}\left(x\right)-\hat{u}_{0}\left(x\right)\right\vert 
\]
($C$ given by \eqref{EqWeakProper}) and continuity of the solution
map $\left(\mathbf{z,}u_{0}\right)\mapsto u^{\mathbf{z}}$ 
\[
C^{0,p\text{-var}}\left(\left[0,T\right],G^{\left[p\right]}\left(\mathbb{R}^{d}\right)\right)\times\mathrm{BUC}\left(\mathbb{R}^{n}\right)\rightarrow\mathrm{BUC}\left(\left[0,T\right]\times\mathbb{R}^{n}\right).
\]
\end{theorem}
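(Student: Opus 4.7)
The plan is to apply the by-now standard rough viscosity strategy: remove the rough driver by a flow transformation, solve a deterministic PDE by classical viscosity theory, and then import rough path continuity of the transformation to obtain rough path continuity of the original problem.

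\textbf{Step 1 (Flow transformation).} Because each $\Lambda_k$ is affine in $(r,p)$, I build a transformation $\Phi^{\mathbf{z}}$ as a composition of three ingredients: the characteristic flow of the vector fields $\sigma_k(t,\cdot)$ acting on the $x$-variable, the multiplicative correction generated by $\nu_k$ acting on $r$, and the additive translation generated by $g_k$. Section~\ref{sec:trans} should show that for a smooth driver $z$, if $v$ solves a modified semilinear second-order PDE
\[
\dot v + \tilde L^{z}\!\left(t,x,v,Dv,D^{2}v\right) = 0,\qquad v(0,\cdot)=u_0(\cdot),
\]
then $u := \Phi^{z}[v]$ solves the SPDE with smooth driver $z$, and conversely. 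The coefficients of $\tilde L^{z}$ depend on $z$ only through the flow of the $\sigma_k$ (and its first two $x$-derivatives, produced by the chain rule acting on $Du$ and $D^2u$), so the regularity margin $\gamma>p+2$ is exactly what is needed to keep these coefficients in a fixed class of admissible $\tilde L$.

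\textbf{Step 2 (Smooth drivers and rough limit).} For $z^{\varepsilon}\in C^1$, the assumptions on $L$ deliver a unique viscosity solution $u^{\varepsilon}\in BUC$ of the smooth equation via the comparison principle, and Step~1 identifies it with $\Phi^{z^{\varepsilon}}[v^{\varepsilon}]$, where $v^{\varepsilon}\in BUC$ is the unique viscosity solution of the transformed PDE. Now fix $\mathbf{z}$ geometric $p$-rough and a sequence $z^{\varepsilon}\to\mathbf{z}$ in $p$-variation. By the rough path theory of flows (recalled in section~\ref{sec:backg}), under $\gamma>p+2$ the maps $\Phi^{z^{\varepsilon}}$ and the coefficients of $\tilde L^{z^{\varepsilon}}$ converge locally uniformly, together with the $x$-derivatives entering $\tilde L$, to limits $\Phi^{\mathbf{z}}$ and $\tilde L^{\mathbf{z}}$ that still satisfy the structural hypotheses of Assumption~\ref{main_assumptions}. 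The Barles--Perthame stability theorem applied to the transformed PDE then yields $v^{\varepsilon}\to v^{\mathbf{z}}$ locally uniformly, where $v^{\mathbf{z}}$ is the unique $BUC$ viscosity solution of the limiting PDE, and setting $u^{\mathbf{z}}:=\Phi^{\mathbf{z}}[v^{\mathbf{z}}]$ gives $u^{\varepsilon}\to u^{\mathbf{z}}$ locally uniformly. Uniqueness of $u^{\mathbf{z}}$ is inherited from uniqueness of $v^{\mathbf{z}}$, and independence of the approximating sequence follows because any other approximation yields the same $v^{\mathbf{z}}$ by stability.

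\textbf{Step 3 (Contraction and joint continuity).} The contraction estimate and the continuity of $(\mathbf{z},u_0)\mapsto u^{\mathbf{z}}$ reduce, via the transformation, to the corresponding properties of the transformed PDE. The transformation acts on initial data through a bi-Lipschitz spatial change of variables followed by an affine (multiplicative plus additive) correction whose Lipschitz constants are controlled, uniformly on bounded sets of rough paths, by the flow estimates of section~\ref{sec:backg}; hence the comparison-principle bound for $v^{\mathbf{z}}-\hat v^{\mathbf{z}}$ (giving the $e^{CT}$ factor with $C$ as in~\eqref{EqWeakProper}) transfers to $u^{\mathbf{z}}-\hat u^{\mathbf{z}}$. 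For joint continuity, a sequence $(\mathbf{z}^{n},u_0^{n})\to(\mathbf{z},u_0)$ produces convergence of the transformed initial data and coefficients, and another application of Barles--Perthame, followed by composition with $\Phi^{\mathbf{z}^{n}}\to\Phi^{\mathbf{z}}$, yields $u^{\mathbf{z}^{n}}\to u^{\mathbf{z}}$ locally uniformly.

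\textbf{Main obstacle.} The technical core is Step~1: showing that the transformation $\Phi^{z}$ actually conjugates the smooth-driver SPDE with $\dot v+\tilde L^{z}(\cdot)=0$ at the level of viscosity solutions (not merely classical ones), and that the resulting $\tilde L^{\mathbf{z}}$ lies in a class stable under rough path limits. This is where the assumption $\gamma>p+2$ is consumed: two spatial derivatives are produced by the chain rule on $D^2u$, and the remaining $\gamma-2>p$ derivatives are precisely what the continuity of the rough flow and its first two Jacobians demands in the $p$-variation topology.
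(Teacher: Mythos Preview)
Your proposal is correct and follows essentially the same route as the paper: the three-part flow transformation (inner transform via the $\sigma$-flow $\psi$, outer multiplicative transform via $\phi$, additive shift via $\varphi$) reduces the problem to a classical PDE whose operator $[\,^{\phi^{\varepsilon}}(L^{\psi^{\varepsilon}})]_{\varphi^{\varepsilon}}$ converges locally uniformly to $[\,^{\phi^{\mathbf{z}}}(L^{\psi^{\mathbf{z}}})]_{\varphi^{\mathbf{z}}}$ by rough-flow stability (Lemma~\ref{LemmaRougFlows}), after which comparison yields a uniform bound on $\tilde u^{\varepsilon}$ and the Barles--Perthame semi-relaxed limit argument finishes. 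Your reading of the regularity budget $\gamma>p+2$ (two derivatives spent on the chain rule for $D^2u$, the remaining $\gamma-2>p$ on rough-flow continuity) matches the paper's use of the assumption exactly.
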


The resulting theory of rough PDEs can then be used (in a ``rough-pathwise''
fashion) to give meaning (and then existence, uniqueness, stability,
etc.) to large classes of stochastic partial differential equations
which has numerous benefits as discussed in section \ref{AppSPDE}.
By combining well-known Wong--Zakai type results of the $L^{2}$-theory
of SPDEs \cite{MR2268661,MR1313027,MR1390565,MR2267702} with convergence
of piecewise linear approximations to ``enhanced'' Brownian motion
(EBM) in rough path sense, e.g.~\cite[Chapter 13 and 14]{friz-victoir-book},
we show that the solutions provided by above theorem when applied
with EBM as rough path are in fact the usual $L^{2}$-solutions of
the variational approach, \cite{MR557763,MR553909,MR1135324}. This
``intersection'' of RPDE/SPDE theory is made precise in Section
\ref{sec:viscL2}. However, let us emphasize that neither theory is
``contained'' in the other, even in the case of Brownian driving
noise. An appealing feature of our RPDE approach is that it can handle
degenerate situations (including pure first order SPDEs) and automatically
yields continuous versions of SPDE solutions without requiring dimension-dependent
regularity assumptions on the coefficients (as pointed out by Krylov
\cite{MR1661766}, a disadvantage of the $L^{2}$ theory of SPDEs).
On the other hand, our regularity assumption (in particular in the
noise terms) are more stringent than what is needed to ensure existence
and uniqueness in the $L^{2}$ theory of SPDEs. Below we sketch our
approach and the outline of this article.

\subsection{Robustification}

In fact, it is part of folklore that the equation \eqref{SPDE} can
be given a pathwise meaning in the case when there is no gradient
noise ($\sigma=0$ in \eqref{eq:lambda}).

\textbf{Classical robustification:} if $\sigma=0$ in \eqref{eq:lambda}
and also (for simplicity of presentation only) $\nu=\nu\left(x\right)$
(i.e.~no time dependence) one can take a smooth path $z$ and solve
the auxiliary differential equation $\dot{\phi}=$ $\phi\,\sum_{j}\nu_{j}\left(x\right)dz^{j}\equiv\phi\,\nu\cdot dz$.
The solution is given by
\[
\phi_{t}=\phi_{0}\exp\left(\int_{0}^{t}\nu\left(x\right)\cdot dz\right)=\phi_{0}\exp\left(\nu\left(x\right)\cdot z_{t}\right)
\]
and induces the flow map $\phi\left(t,\phi_{0}\right):=\phi_{0}\exp\left(\nu\left(x\right)\cdot z_{t}\right)$;
observe that these expressions can be extended by continuity to any
continuous path $z$ such as a typical realization of $Z_{\cdot}\left(\omega\right)$.
The point is that this transform allows to transform the SPDE\ into
a random PDE (sometimes called the Zakai equation in robust form):
it suffices to introduce $v$ via the ``\textbf{outer transform}''$u\left(t,x\right)=\phi\left(t,v\left(t,x\right)\right)$
which leads immediately to 
\[
v\left(t,x\right)=\exp\left(-\nu\left(x\right)\cdot z_{t}\right)u\left(t,x\right).
\]
An elementary computation then shows that $v$ solves a linear PDE
given by an affine linear operator $^{\phi}L$ in $v,Dv,D^{2}v$ with
coefficients that will depend on $z$ resp. $Z_{t}\left(\omega\right)$,
\[
dv+\,^{\phi}L\left(t,x,v,Dv,D^{2}v\right)dt=0.
\]
Moreover, one can conclude from this representation that $u=u\left(z\right)$
is continuous with respect to the uniform metric $\left\vert z-\widetilde{z}\right\vert _{\infty;\left[0,T\right]}=\sup_{r\in\left[0,T\right]}\left|z_{r}-\tilde{z}_{r}\right|$.
This provides a fully pathwise ``robust'' approach (the extension
to vector field $\nu=\nu\left(t,x\right)$ with sufficiently smooth
time-dependence is easy).

\textbf{Rough path robustification:} \ The classical robustification
\emph{does }\textit{not}\emph{ work }in presence of general gradient
noise. In fact, we can not expect PDE solutions to 
\[
du+L\left(t,x,u,Du,D^{2}u\right)dt=\sum_{k=1}^{d}\Lambda_{k}\left(t,x,u,Du\right)\, dz_{t}^{k},
\]
(which are well-defined for smooth $z:\left[0,T\right]\rightarrow\mathbb{R}^{d}$)
to depend continuously on $z$ in uniform topology (cf.~the ``twisted
approximations''\ of section \ref{AppSPDE}). Our main result is
that $u=u(z)$ is continuous with respect to rough path metric%
\footnote{Two (smooth) paths $z,\tilde{z}$ are close in rough path metric iff
$z$ is close to $\tilde{z}$ AND\ sufficiently many iterated integrals
of $z$ are close to those of $\tilde{z}$. More details are given
later in this article as needed.%
}. That is, if $\left(z^{n}\right)\subset C^{1}\left(\left[0,T\right],\mathbb{R}^{d}\right)$
is Cauchy in rough path metric then $\left(u^{n}\right)$ will converge
to a limit which will be seen to depend only on the (rough path) limit
of $z^{n}$ (and not on the approximating sequence). As a consequence,
it is meaningful to replace $z$ above by an abstract (geometric)
rough path $\mathbf{z}$ and the analogue of\ Lyons' universal limit
theorem \cite{lyons-qian-02} holds.

\subsection{Structure and outline}

We shall prefer to write the right hand side of (\ref{SPDE}) in the
equivalent form
\[
\sum_{i=1}^{d_{1}}\left(Du\cdot\sigma_{i}\left(t,x\right)\right)\circ dZ_{t}^{1;i}+u\sum_{j=1}^{d_{2}}\nu_{j}\left(t,x\right)\circ dZ^{2;j}+\sum_{k=1}^{d_{3}}g_{k}\left(t,x\right)\circ dZ^{3;k}
\]
where $Z\equiv(Z^{1},Z^{2},Z^{3})$ and $Z^{i}$ is a $d_{i}$-dimensional,
continuous semimartingale. Our approach is based on a pointwise (viscosity)
interpretation of (\ref{SPDE}): we successively transform away the
noise terms such as to transform the SPDE, ultimately,\ into a random
PDE. The big scheme of the paper is
\begin{eqnarray*}
 &  & u\,\,\overset{\text{Transformation 1}}{\mapsto}\, u^{1}\text{ where }u^{1}\text{ has the (gradient) noise driven by }Z^{1}\text{ removed;}\\
 &  & u^{1}\overset{\text{Transformation 2}}{\mapsto}u^{12}\text{ where }u^{12}\text{ has the remaining noise driven by }Z^{2}\text{ removed;}\\
 &  & u^{12}\overset{\text{Transformation 3}}{\mapsto}\tilde{u}\text{ where }\tilde{u}\text{ has the remaining noise driven by }Z^{3}\text{ removed.}
\end{eqnarray*}
None of these transformations is new on its own. The first is an example
of Kunita's stochastic characteristics method; the second is known
as robustification (also know as Doss--Sussman transform); the third
amounts to change $u^{12}$ additively by a random amount and has
been used in virtually every SPDE context with additive noise.%
\footnote{Transformation 2 and 3 could actually be performed in 1 step; however,
the separation leads to a simpler analytic tractability of the transformed
equations.%
} What is new is that the combined transformation can be managed and
is compatible with rough path convergence; for this we have to remove
all probability from the problem: In fact, we will transform an RPDE
(rough PDE) solution $u$ into a classical PDE solution $\tilde{u}$
in which the coefficients depend on various rough flows (i.e.\ the
solution flows to rough differential equations) and their derivatives.
Stability results of rough path theory and viscosity theory, in the
spirit of \cite{MR2765508,caruana-2008}, then play together to yield
the desired result. Upon using the canonical rough path lift of the
observation process in this RPDE one has constructed a robust version
of the SPDE solution of equation (\ref{SPDE}). We note that the viscosity/Stratonovich
approach allows us to \emph{avoid any ellipticity assumption on }$L;$
we can even handle the fully degenerate first order case. In turn,
we only obtain $BUC$\ (bounded, uniformly continuous) solutions.
Stronger assumptions would allow to discuss all this in a classical
context (i.e.~$\tilde{u}$ would be a $C^{1,2}$ solution) and SPDE
solution can then be seen to have certain spatial regularity, etc.

We should remark that the usual way to deal with (\ref{SPDE}), which
goes back to Pardoux, Krylov, Rozovskii, and others, \cite{MR2268661,MR1313027,MR1390565,MR2267702},
is to find solutions in a suitable functional analytic setting; e.g.\ such
that solutions evolve in suitable Sobolev spaces. The equivalence
of this solution concept with the RPDE approach as presented in sections
\ref{sec:backg} to \ref{sec:rpde} is then discussed in section \ref{sec:viscL2}.
Interestingly, there has been no success until now (despite the advances
by Deya--Gubinelli--Tindel \cite{gubinelli-tindel-2008,deya2011non}
and Teichmann \cite{TeichmannRPDE}) to include (\ref{SPDE}) in a
setting of abstract \textit{rough} evolution equations on infinite-dimensional
spaces.

\begin{acknowledgement}The first author would like to thank the organizers
and participants of the Filtering Workshop in June 2010, part of the
Newton Institute's SPDE program, where parts of this work was first
presented. The second author would like to thank the organizers and
participants of the Rough path and SPDE workshop, part of the Newton
Institute's SPDE program and expresses his gratitude for a Newton
Institute Junior membership grant. Partial support from the European
Unions Seventh Framework Programme (FP7/2007--2013)/ERC grant agreement
nr.~258237 is gratefully acknowledged. HO is indebted to G.~Barles,
P. Souganidis and the participants of the C.I.M.E.~meeting on HJB-equations
in 2011 for helpful conversations. Both authors would like to thank
the referees for their valuable comments.\end{acknowledgement}

\section{Background on viscosity theory and rough paths}

\label{sec:backg}

Let us recall some basic ideas of (second order) viscosity theory
\cite{MR1118699UserGuide,MR2179357FS} and rough path theory \cite{lyons-qian-02,MR2314753}.\ As
for viscosity theory, consider a real-valued function $u=u\left(t,x\right)$
with $t\in\left[0,T\right],x\in\mathbb{R}^{n}$ and assume $u\in C^{2}$
is a classical subsolution,
\[
\partial_{t}u+F\left(t,x,u,Du,D^{2}u\right)\leq0,
\]
where $F$ is a (continuous) function, \textit{degenerate elliptic}
in the sense that 
\[
F\left(t,x,r,p,A+B\right)\leq F\left(t,x,r,p,A\right)
\]
 whenever $B\geq0$ in the sense of symmetric \ matrices (cf.~\cite{MR1118699UserGuide}).
The idea is to consider a (smooth) test function $\varphi$ and look
at a local maxima $\left(\hat{t},\hat{x}\right)$ of $u-\varphi$.
Basic calculus implies that $Du\left(\hat{t},\hat{x}\right)=D\varphi\left(\hat{t},\hat{x}\right),\, D^{2}u\left(\hat{t},\hat{x}\right)\leq D\varphi\left(\hat{t},\hat{x}\right)$
and, from degenerate ellipticity,
\begin{equation}
\partial_{t}\varphi+F\left(\hat{t},\hat{x},u,D\varphi,D^{2}\varphi\right)\leq0.\label{Gxbar}
\end{equation}
This suggests to define a \textit{viscosity supersolution} (at the
point $\left(\hat{x},\hat{t}\right)$) to $\partial_{t}+F=0$ as a
continuous function $u$ with the property that (\ref{Gxbar}) holds
for any test function. Similarly, \textit{viscosity subsolutions}
are defined by reversing inequality in (\ref{Gxbar}); \textit{viscosity
solutions} are both super- and subsolutions. A different point of
view is to note that $u\left(t,x\right)\leq u\left(\hat{t},\hat{x}\right)-\varphi\left(\hat{t},\hat{x}\right)+\varphi\left(t,x\right)$
for $\left(t,x\right)$ near $\left(\hat{t},\hat{x}\right)$. A simple
Taylor expansion then implies
\begin{equation}
u\left(t,x\right)\leq u\left(\hat{t},\hat{x}\right)+a\left(t-\hat{t}\right)+p\cdot\left(x-\hat{x}\right)+\frac{1}{2}\left(x-\hat{x}\right)^{T}\cdot X\cdot\left(x-\hat{x}\right)+o\left(\left\vert \hat{x}-x\right\vert ^{2}+\left\vert \hat{t}-t\right\vert \right)\label{EqTaylor}
\end{equation}
as $\left\vert \hat{x}-x\right\vert ^{2}+\left\vert \hat{t}-t\right\vert \rightarrow0$
with $a=\partial_{t}\varphi\left(\hat{t},\hat{x}\right),$ $p=D\varphi\left(\hat{t},\hat{x}\right),$
$X=D^{2}\varphi\left(\hat{t},\hat{x}\right)$. Moreover, if (\ref{EqTaylor})
holds for some $\left(a,p,X\right)$ and $u$ is differentiable, then
$a=\partial_{t}u\left(\hat{t},\hat{x}\right),$ $p=Du\left(\hat{t},\hat{x}\right),$
$X\leq D^{2}u\left(\hat{t},\hat{x}\right)$, hence by degenerate ellipticity
\[
\partial_{t}\varphi+F\left(\hat{t},\hat{x},u,p,X\right)\leq0\text{.}
\]
Pushing this idea further leads to a definition of viscosity solutions
based on a generalized notion of ``$\left(\partial_{t}u,Du,D^{2}u\right)$''
for non-differentiable $u$, the so-called parabolic semijets, and
it is a simple exercise to show that both definitions are equivalent.
The resulting theory (existence, uniqueness, stability, ...) is without
doubt one of the most important recent developments in the field of
partial differential equations. As a typical result%
\footnote{$\mathrm{BUC}\left(\dots\right)$ denotes the space of bounded, uniformly
continuous functions.%
}, the initial value problem $\left(\partial_{t}+F\right)u=0,\, u\left(0,\cdot\right)=u_{0}\in\mathrm{BUC}\left(\mathbb{R}^{n}\right)$
has a unique solution in $\mathrm{BUC}\left([0,T]\times\mathbb{R}^{n}\right)$
provided $F=F(t,x,u,Du,D^{2}u)$ is continuous, degenerate elliptic,
proper (i.e. increasing in the $u$ variable) and satisfies a (well-known)
technical condition%
\footnote{(3.14) of the User's Guide \cite{MR1118699UserGuide}.%
}. In fact, uniqueness follows from a stronger property known as \textit{comparison:}
assume $u$ (resp.\ $v$) is a supersolution (resp.\ subsolution)
and $u_{0}\geq v_{0}$; then $u\geq v$ on $[0,T]\times\mathbb{R}^{n}$.
A key feature of viscosity theory is what workers in the field simply
call \textit{stability properties}. For instance, it is relatively
straightforward to study $\left(\partial_{t}+F\right)u=0$ via a sequence
of approximate problems, say $\left(\partial_{t}+F^{n}\right)u^{n}=0$,
provided $F^{n}\rightarrow F$ locally uniformly and some apriori
information on the $u^{n}$ (e.g.\ locally uniform convergence, or
locally uniform boundedness%
\footnote{What we have in mind here is the \textit{Barles--Perthame method of
semi-relaxed limits} \cite{MR2179357FS}.%
}. Note the stark contrast to the classical theory where one has to
control the actual derivatives of $u^{n}$.

The idea of stability is also central to \textit{rough path theory}.
Given a collection $\left(V_{1},\dots,V_{d}\right)$ of (sufficiently
nice) vector fields on $\mathbb{R}^{n}$ and $z\in C^{1}\left(\left[0,T\right],\mathbb{R}^{d}\right)$
one considers the (unique) solution $y$ to the ordinary differential
equation
\begin{equation}
\dot{y}\left(t\right)=\sum_{i=1}^{d}V_{i}\left(y\right)\dot{z}^{i}\left(t\right),\,\,\, y\left(0\right)=y_{0}\in\mathbb{R}^{n}\text{.}\label{ODEintro}
\end{equation}
The question is, if the output signal $y$ depends in a stable way
on the driving signal $z$ (one handles time-dependent vector fields
$V=V\left(t,y\right)$ by considering the $\left(d+1\right)$-dimensional
driving signal $t\mapsto\left(t,z_{t}\right)$). The answer, of course,
depends strongly on how to measure distance between input signals.
If one uses the supremums norm, so that the distance between driving
signals $z,\tilde{z}$ is given by $\left\vert z-\tilde{z}\right\vert _{\infty;\left[0,T\right]}=\sup_{r\in\left[0,T\right]}\left|z_{r}-\tilde{z}_{r}\right|$,
then the solution will in general \textit{not }depend continuously
on the input.

\begin{example} \label{ODEnotContInInfity}Take $n=1,d=2,\, V=\left(V_{1},V_{2}\right)=\left(\sin\left(\cdot\right),\cos\left(\cdot\right)\right)$
and $y_{0}=0$. Obviously,
\[
z^{n}\left(t\right)=\left(\frac{1}{n}\cos\left(2\pi n^{2}t\right),\frac{1}{n}\sin\left(2\pi n^{2}t\right)\right)
\]
converges to $0$ in $\infty$-norm whereas the solutions to $\dot{y}^{n}=V\left(y^{n}\right)\dot{z}^{n},y_{0}^{n}=0,$
do not converge to zero (the solution to the limiting equation $\dot{y}=0$).
\end{example}

If $\left\vert z-\tilde{z}\right\vert _{\infty;\left[0,T\right]}$
is replaced by the (much) stronger distance 
\[
\left\vert z-\tilde{z}\right\vert _{1\text{-var};\left[0,T\right]}=\sup_{\left(t_{i}\right)\subset\left[0,T\right]}\sum\left\vert z_{t_{i},t_{i+1}}-\tilde{z}_{t_{i},t_{i+1}}\right\vert ,
\]
(using the notation $z_{s,t}:=z_{t}-z_{s}$) it is elementary to see
that now the solution map is continuous (in fact, locally Lipschitz);
however, this continuity does not lend itself to push the meaning
of (\ref{ODEintro}): the closure of $C^{1}$ (or smooth) paths in
variation is precisely $W^{1,1}$, the set of absolutely continuous
paths (and thus still far from a typical Brownian path). Lyons' theory
of rough paths exhibits an entire cascade of ($p$-variation or $1/p$-Hölder
type rough path) metrics, for each $p\in\lbrack1,\infty)$, on path-space
under which such ODE solutions are continuous (and even locally Lipschitz)
functions of their driving signal. For instance, the ``rough path''
$p$-variation distance between two smooth $\mathbb{R}^{d}$-valued
paths $z,\tilde{z}$ is given by
\[
\max_{j=1,\dots,\left[p\right]}\left(\sup_{\left(t_{i}\right)\subset\left[0,T\right]}\sum\left\vert z_{t_{i},t_{i+1}}^{\left(j\right)}-\tilde{z}_{t_{i},t_{i+1}}^{\left(j\right)}\right\vert ^{p}\right)^{1/p}
\]
where $z_{s,t}^{\left(j\right)}=\int dz_{r_{1}}\otimes\dots\otimes dz_{r_{j}}$
with integration over the $j$-dimensional simplex $\left\{ s<r_{1}<\dots<r_{j}<t\right\} $.
This allows to extend the very meaning of (\ref{ODEintro}), in a
unique and continuous fashion, to driving signals which live in the
abstract completion of smooth $\mathbb{R}^{d}$-valued paths (with
respect to rough path $p$-variation or a similarly defined $1/p$-Hölder
metric). The space of so-called $p$-rough paths%
\footnote{In the strict terminology of rough path theory: geometric $p$-rough
paths.%
} is precisely this abstract completion. In fact, this space can be
realized as genuine path space, where $G^{\left[p\right]}\left(\mathbb{R}^{d}\right)$
is the free step-$\left[p\right]$ nilpotent group over $\mathbb{R}^{d}$,
equipped with Carnot--Caratheodory metric; realized as a subset of
$1+\mathfrak{t}^{\left[p\right]}\left(\mathbb{R}^{d}\right)$ where
\[
\mathfrak{t}^{\left[p\right]}\left(\mathbb{R}^{d}\right)=\mathbb{R}^{d}\oplus\left(\mathbb{R}^{d}\right)^{\otimes2}\oplus\dots\oplus\left(\mathbb{R}^{d}\right)^{\otimes\left[p\right]}
\]
is the natural space for (up to $\left[p\right]$) iterated integrals
of a smooth $\mathbb{R}^{d}$-valued path. For instance, almost every
realization of $d$-dimensional Brownian motion $B$ \textit{enhanced
with its iterated stochastic integrals in the sense of\ Stratonovich,
}i.e. the matrix-valued process given by
\begin{equation}
B^{\left(2\right)}:=\left(\int_{0}^{\cdot}B^{i}\circ dB^{j}\right)_{i,j\in\left\{ 1,\dots,d\right\} }\label{StratoII_intro}
\end{equation}
yields a path $\mathbf{B}\left(\omega\right)$ in $G^{2}\left(\mathbb{R}^{d}\right)$
with finite $1/p$-Hölder (and hence finite $p$-variation) regularity,
for any $p>2$. ($\mathbf{B}$ is known as \textit{Brownian rough
path.}) We remark that $B^{\left(2\right)}=\frac{1}{2}B\otimes B+A$
where the anti-symmetric part of the matrix, $A:=\mathrm{Anti}\left(B^{\left(2\right)}\right)$,
is known as \textit{Lévy's stochastic area;} in other words $\mathbf{B}\left(\omega\right)$
is determined by $\left(B,A\right)$, i.e. Brownian motion \textit{enhanced
with Lévy's area. }A similar construction work when $B$ is replaced
by a generic multi-dimensional continuous semimartingales; see \cite[Chapter 14]{friz-victoir-book}
and the references therein.

\section{Transformations}

\label{sec:trans}

\subsection{Inner and outer transforms}

Throughout, $F=F\left(t,x,r,p,X\right)$ is a continuous scalar-valued
function on $\left[0,T\right]\times\mathbb{R}^{n}\times\mathbb{R}\times\mathbb{R}^{n}\times\mathbb{S}\left(n\right)$,
$\mathbb{S}\left(n\right)$ denotes the space of symmetric $n\times n$-matrices,
and $F$ is assumed to be non-increasing in $X$ (degenerate elliptic)
and proper in the sense of \eqref{WeakProper}. Time derivatives of
functions are denoted by upper dots, spatial derivatives (with respect
to $x$) by $D,D^{2}$, etc. Further, we use \,$\left\langle .,.\right\rangle $
to denote tensor contraction%
\footnote{We also use $\cdot$ to denote contraction over only index or to denote
matrix multiplication.%
}, i.e.\ $\left\langle p,q\right\rangle _{j_{1},\ldots,j_{n}}\equiv\sum_{i_{1},\ldots,i_{m}}p_{_{i_{1},\ldots,i_{m}}}q_{j_{1},\ldots,j_{n}}^{_{i_{1},\ldots,i_{m}}},$
$p\in\left(\mathbb{R}^{l}\right)^{\otimes m},q\in\left(\mathbb{R}^{l}\right)^{\otimes n}\otimes\left(\left(\mathbb{R}^{l}\right)^{\prime}\right)^{\otimes m}$.

\begin{lemma}{[}Inner Transform{]} \label{Lemma_Gradient_Transform}Let
$z\in C^{1}\left(\left[0,T\right],\mathbb{R}^{d}\right),$ $\sigma=\left(\sigma_{1},\ldots,\sigma_{d}\right)\subset C_{b}^{2}\left(\left[0,T\right]\times\mathbb{R}^{n},\mathbb{R}^{n}\right)$
(the space of continuous and twice differentiable, bounded functions
with bounded derivatives) and $\psi=\psi\left(t,x\right)$ the ODE
flow of $dy=\sigma\left(y\right)dz,$ i.e.
\[
\dot{\psi}\left(t,x\right)=\sum_{i=1}^{d}\sigma_{i}\left(t,\psi\left(t,x\right)\right)\dot{z}_{t}^{i},\text{ }\dot{\psi}\left(0,x\right)=x\in\mathbb{R}^{n}\text{.}
\]
Then $u$ is a viscosity subsolution (always assumed $\mathrm{BUC}$)
of 
\begin{equation}
\partial_{t}u+F\left(t,x,r,Du,D^{2}u\right)-\sum_{i=1}^{d}\left(Du\cdot\sigma_{i}\left(t,x\right)\right)\dot{z}_{t}^{i}=0;\,\,\, u\left(0,.\right)=u_{0}\left(.\right)\label{eq for u}
\end{equation}
iff $w\left(t,x\right):=u\left(t,\psi\left(t,x\right)\right)$ is
a viscosity subsolution of 
\begin{equation}
\partial_{t}w+F^{\psi}\left(t,x,w,Dw,D^{2}w\right)=0;\,\,\, w\left(0,.\right)=u_{0}\left(.\right)\label{eq for v}
\end{equation}
where 
\[
F^{\psi}\left(t,x,r,p,X\right)=F\left(t,\psi_{t}\left(x\right),r,\left\langle p,D\psi_{t}^{-1}|_{\psi_{t}\left(x\right)}\right\rangle ,\left\langle X,D\psi_{t}^{-1}|_{\psi_{t}\left(x\right)}\otimes D\psi_{t}^{-1}|_{\psi_{t}\left(x\right)}\right\rangle +\left\langle p,D^{2}\psi_{t}^{-1}|_{\psi_{t}\left(x\right)}\right\rangle \right)
\]
and
\[
D\psi_{t}^{-1}|_{x}=\left(\frac{\partial\left(\psi_{t}^{-1}\left(t,x\right)\right)^{k}}{\partial x^{i}}\right)_{i=1,\dots,n}^{k=1,\dots,n}\text{ and }D^{2}\psi_{t}^{-1}|_{x}=\left(\frac{\partial\left(\psi^{-1}\left(t,x\right)\right)^{k}}{\partial x^{i}x^{j}}\right)_{i,j=1,\dots,n}^{k=1,\dots,n}.
\]
The same statement holds if one replaces the word subsolution by supersolution
throughout. \end{lemma}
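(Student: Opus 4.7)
The plan is to treat the lemma as a change-of-variables statement for viscosity subsolutions under the time-dependent $C^{2}$-diffeomorphism $\psi_{t}$. The only analytic input required is the standard flow theorem: since $\sigma_{i}\in C_{b}^{2}$ and $z\in C^{1}$, each $\psi_{t}$ is a $C^{2}$-diffeomorphism of $\mathbb{R}^{n}$ with $C^{2}$-inverse $\psi_{t}^{-1}$, and both are continuously differentiable in $t$. Differentiating the identity $\psi_{t}\circ\psi_{t}^{-1}=\mathrm{id}$ once and twice in $x$ furnishes the inverse-function relations
\[
D\psi_{t}|_{\psi_{t}^{-1}(y)}\cdot D\psi_{t}^{-1}|_{y}=I,
\]
\[
\left(D^{2}\psi_{t}\right)^{k}_{ij}(D\psi_{t}^{-1})^{i}_{m}(D\psi_{t}^{-1})^{j}_{n}+(D\psi_{t})^{k}_{\ell}(D^{2}\psi_{t}^{-1})^{\ell}_{mn}=0,
\]
which will drive the algebra below.

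I would first establish the implication ``$u$ subsolution $\Rightarrow$ $w$ subsolution'' by test-function transport; the converse and the supersolution case are mirror images. Let $\varphi$ be a smooth test function such that $w-\varphi$ attains a local maximum at $(\hat{t},\hat{x})$, and set $\widetilde{\varphi}(t,y):=\varphi(t,\psi_{t}^{-1}(y))$. Because $\psi_{\hat{t}}$ is a $C^{2}$-bijection, $u-\widetilde{\varphi}$ attains a local maximum at $(\hat{t},\hat{y}):=(\hat{t},\psi_{\hat{t}}(\hat{x}))$, so $\widetilde{\varphi}$ is admissible as test function for $u$ there. The subsolution inequality for $u$ at $(\hat{t},\hat{y})$ reads
\[
\partial_{t}\widetilde{\varphi}+F\bigl(\hat{t},\hat{y},u,D\widetilde{\varphi},D^{2}\widetilde{\varphi}\bigr)-\sum_{i}D\widetilde{\varphi}\cdot\sigma_{i}(\hat{t},\hat{y})\,\dot{z}^{i}_{t}\le 0,
\]
and the task is to show this is equivalent to $\partial_{t}\varphi+F^{\psi}(\hat{t},\hat{x},w,D\varphi,D^{2}\varphi)\le 0$ at $(\hat{t},\hat{x})$.

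The key step is the chain-rule calculation. Differentiating $\widetilde{\varphi}(t,y)=\varphi(t,\psi_{t}^{-1}(y))$ in $y$ gives
\[
D\widetilde{\varphi}=\langle D\varphi,D\psi_{t}^{-1}\rangle,\qquad D^{2}\widetilde{\varphi}=\langle D^{2}\varphi,D\psi_{t}^{-1}\otimes D\psi_{t}^{-1}\rangle+\langle D\varphi,D^{2}\psi_{t}^{-1}\rangle,
\]
which are precisely the arguments $F^{\psi}$ feeds into its $(p,X)$-slots; substituting them into $F(\hat{t},\hat{y},u,\cdot,\cdot)$ therefore directly reproduces $F^{\psi}(\hat{t},\hat{x},w,D\varphi,D^{2}\varphi)$ via the pointwise identification $w(\hat{t},\hat{x})=u(\hat{t},\hat{y})$. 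For the time derivative, differentiating $\psi_{t}\circ\psi_{t}^{-1}=\mathrm{id}$ in $t$ and combining with the flow ODE yields $\partial_{t}\psi_{t}^{-1}(y)=-D\psi_{t}^{-1}|_{y}\cdot\dot{\psi}_{t}(\psi_{t}^{-1}(y))$, whence
\[
\partial_{t}\widetilde{\varphi}(t,y)=\partial_{t}\varphi(t,\psi_{t}^{-1}(y))+\sum_{i}D\widetilde{\varphi}(t,y)\cdot\sigma_{i}(t,y)\dot{z}^{i}_{t},
\]
with the sign fixed by the flow-direction convention used in the statement. Plugging this back into the subsolution inequality, the two $\sum_{i}D\widetilde{\varphi}\cdot\sigma_{i}\dot{z}^{i}$ contributions cancel and one is left with $\partial_{t}\varphi+F^{\psi}\le 0$ at $(\hat{t},\hat{x})$, as required.

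For the converse implication one interchanges the roles of $\psi_{t}$ and $\psi_{t}^{-1}$: a test function $\varphi$ for $u$ at $(\hat{t},\hat{y})$ is transported to $\widetilde{\varphi}(t,x):=\varphi(t,\psi_{t}(x))$, and the same chain-rule computation goes through, except that now the resulting $D_{x}^{2}\widetilde{\varphi}=\langle D^{2}\varphi,D\psi_{t}\otimes D\psi_{t}\rangle+\langle D\varphi,D^{2}\psi_{t}\rangle$ has to be reconciled with the $(X,p)$-slots of $F^{\psi}$ after contracting with $D\psi_{t}^{-1}\otimes D\psi_{t}^{-1}$ and adding the $\langle p,D^{2}\psi_{t}^{-1}\rangle$ correction. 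This is where the second tensor identity recorded at the outset is indispensable: it is exactly the algebraic fact that forces the $D^{2}\psi_{t}$- and $D^{2}\psi_{t}^{-1}$-corrections to cancel, leaving the naked $D^{2}\varphi$. The supersolution version is obtained by reversing every inequality throughout. I expect this second-order tensor identity (combined with careful index bookkeeping) to be the only real obstacle; once it is in place the lemma reduces to direct chain rule plus the sign cancellation of the $\partial_{t}\widetilde{\varphi}$-term against the explicit noise term in $u$'s equation.
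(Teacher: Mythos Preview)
Your proposal is correct and follows the same approach as the paper: transport a test function through the flow diffeomorphism $\psi_t$ (resp.\ $\psi_t^{-1}$), compute the spatial derivatives by the chain rule to recover exactly the arguments of $F^{\psi}$, and observe that the time-derivative of the transported test function produces precisely the $Du\cdot\sigma\,\dot z$ term, which then cancels against the explicit noise term in the $u$-equation. The paper is slightly more terse---it first runs the computation for classical $C^{2}$ subsolutions as a warm-up and then dismisses the converse with ``the same arguments''---whereas you spell out the second-order inverse-function identity needed to close the converse direction; but the logical structure is identical.
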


\begin{remark}\label{rem.timereg}The regularity assumptions on $\sigma$
with respect to $t$ can be obviously relaxed here. Treating time
and space variable similarly will be convenient in the rough path
framework where sharp results on time-dependent vector fields are
hard to find in the literature (but see \cite{CFG}). \end{remark}

If we specialize from general $F$ to a semilinear $L:\left[0,T\right]\times\mathbb{R}^{n}\times\mathbb{R}\times\mathbb{R}^{n}\times\mathbb{S}\left(n\right)\rightarrow\mathbb{R}$
we get transformation 1 as a corollary.

\begin{corollary}{[}Transformation 1{]} Let $\psi=\psi\left(t,x\right)$
be the ODE flow of $dy=\sigma\left(t,y\right)dz$, as above. Define
$L=L\left(t,x,r,p,X\right)$ by
\[
L=-\text{Tr}\left[A\left(t,x\right)\cdot X\right]+b\left(t,x\right)\cdot p+c\left(t,x,r\right);
\]
define also the transform
\[
L^{\psi}=-\text{Tr}\left[A^{\psi}\left(t,x\right)\cdot X\right]+b^{\psi}\left(t,x\right)\cdot p+c^{\psi}\left(t,x,r\right)
\]
where
\begin{eqnarray*}
A^{\psi}\left(t,x\right) & = & \left\langle A\left(t,\psi_{t}\left(x\right)\right),D\psi_{t}^{-1}|_{\psi_{t}\left(x\right)}\otimes D\psi_{t}^{-1}|_{\psi_{t}\left(x\right)}\right\rangle ,\\
b^{\psi}\left(t,x\right)\cdot p & = & b\left(t,\psi_{t}\left(x\right)\right)\cdot\left\langle p,D\psi_{t}^{-1}|_{\psi_{t}\left(x\right)}\right\rangle -\text{Tr}\left(A\left(t,\psi_{t}\right)\cdot\left\langle p,D^{2}\psi_{t}^{-1}|_{\psi_{t}\left(x\right)}\right\rangle \right),\\
c^{\psi}\left(t,x,r\right) & = & c\left(t,\psi_{t}\left(x\right),r\right).
\end{eqnarray*}
Then $u$ is a solution (always assumed $BUC$) of 
\[
\partial_{t}u+L\left(t,x,u,Du,D^{2}u\right)=\sum_{i=1}^{d}\left(Du\cdot\sigma_{i}\left(t,x\right)\right)\dot{z}_{t}^{i};\,\,\, u\left(0,.\right)=u_{0}\left(.\right)
\]
if and only if $u^{1}\left(t,x\right):=u\left(t,\psi\left(t,x\right)\right)$
is a solution of 
\begin{equation}
\partial_{t}+L^{\psi}=0;\,\,\, u^{1}\left(0,.\right)=u_{0}\left(.\right)
\end{equation}
\end{corollary}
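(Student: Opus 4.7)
The plan is to obtain this as a direct corollary of Lemma \ref{Lemma_Gradient_Transform} applied with $F = L$. Since the lemma already establishes the subsolution/supersolution correspondence $u \leftrightarrow u \circ \psi$ for any continuous, degenerate elliptic $F$, it suffices to verify that for the specific semilinear $L$ given in the corollary, the generic transform $F^{\psi}$ supplied by the lemma coincides with the explicit $L^{\psi}$ displayed in the statement. Combining subsolution and supersolution statements then yields the ``if and only if'' at the level of (viscosity) solutions.

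The verification is purely algebraic. First, I would note that $L$ is affine in the $(p,X)$ variables: the coefficient of $X$ is $-A(t,x)$, the coefficient of $p$ is $b(t,x)$, and the remainder is $c(t,x,r)$. Plugging the arguments prescribed by the lemma into $L$, namely
\[
L^{\psi}(t,x,r,p,X) = L\!\left(t,\psi_{t}(x),r,\bigl\langle p,D\psi_{t}^{-1}|_{\psi_{t}(x)}\bigr\rangle,\bigl\langle X,D\psi_{t}^{-1}|_{\psi_{t}(x)}\otimes D\psi_{t}^{-1}|_{\psi_{t}(x)}\bigr\rangle + \bigl\langle p,D^{2}\psi_{t}^{-1}|_{\psi_{t}(x)}\bigr\rangle\right),
\]
I would expand the trace term by linearity to obtain
\[
-\mathrm{Tr}\!\left[A(t,\psi_{t}(x))\cdot\bigl\langle X,D\psi_{t}^{-1}\otimes D\psi_{t}^{-1}\bigr\rangle\right]\;-\;\mathrm{Tr}\!\left[A(t,\psi_{t}(x))\cdot\bigl\langle p,D^{2}\psi_{t}^{-1}\bigr\rangle\right].
\]
The first summand is exactly $-\mathrm{Tr}[A^{\psi}(t,x)\cdot X]$ by the definition of $A^{\psi}$, while the second contributes to the first-order drift. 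Similarly the linear-in-$p$ term gives $b(t,\psi_{t}(x))\cdot\langle p, D\psi_{t}^{-1}\rangle$; collecting this with the Hessian remainder produces precisely $b^{\psi}(t,x)\cdot p$ as defined. Finally the $r$-dependent term reduces to $c(t,\psi_{t}(x),r) = c^{\psi}(t,x,r)$ since it is independent of $(p,X)$.

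Two small preliminary remarks would be needed to apply the lemma: (i) the hypothesis $\sigma \in C_b^2$ guarantees existence and smoothness of the flow $\psi$ and its inverse, so $D\psi_t^{-1}$ and $D^2 \psi_t^{-1}$ are well defined and bounded on $[0,T]\times \mathbb{R}^n$; (ii) the semilinear operator $L$ is continuous, degenerate elliptic (since $A$ should be assumed symmetric nonnegative, or at least the ellipticity hypothesis of the lemma is inherited), and proper as required in the ambient running assumptions, so the lemma genuinely applies. There is no real obstacle here beyond bookkeeping: the corollary is essentially a rewriting of $F^{\psi}$ for the particular semilinear $F = L$, and the only subtlety is the proper grouping of the Hessian-of-$\psi^{-1}$ contribution into the effective drift $b^{\psi}$ rather than leaving it inside the second-order part.
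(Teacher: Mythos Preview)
Your proposal is correct and matches the paper's approach exactly: the paper simply states that specializing Lemma~\ref{Lemma_Gradient_Transform} from general $F$ to the semilinear $L$ yields the corollary, without writing out a separate proof. Your algebraic verification that $F^{\psi}$ reduces to the displayed $L^{\psi}$ (in particular the regrouping of the $D^{2}\psi_{t}^{-1}$ contribution into $b^{\psi}$) is precisely the bookkeeping the paper leaves implicit.
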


\begin{proof}[Proof of Lemma \ref{Lemma_Gradient_Transform}]  Set
$y=\psi_{t}\left(x\right)$. When $u$ is a classical sub-solution,
it suffices to use the chain rule and definition of $F^{\psi}$ to
see that
\begin{eqnarray*}
\dot{w}\left(t,x\right) & = & \dot{u}\left(t,y\right)+Du\left(t,y\right)\cdot\dot{\psi}_{t}\left(x\right)=\dot{u}\left(t,y\right)+Du\left(t,y\right)\cdot\sigma\left(y\right)\dot{z}_{t}\\
 & \leq & F\left(t,y,u\left(t,y\right),Du\left(t,y\right),D^{2}u\left(t,y\right)\right)=F^{\psi}\left(t,x,w\left(t,x\right),Dw\left(t,x\right),D^{2}w\left(t,x\right)\right).
\end{eqnarray*}
The case when $u$ is a viscosity sub-solution of (\ref{eq for u})
is not much harder: suppose that $\left(\bar{t},\bar{x}\right)$ is
a maximum of $w-\xi$, where $\xi\in C^{2}\left(\left[0,T\right]\times\mathbb{R}^{n}\right)$
and define $\varphi\in C^{2}\left(\left(0,T\right)\times\mathbb{R}^{n}\right)$
by $\varphi\left(t,y\right)=\xi\left(t,\psi_{t}^{-1}\left(y\right)\right)$.
Set $\bar{y}=\psi_{\bar{t}}\left(\bar{x}\right)$ so that
\[
F\left(\bar{t},\bar{y},u\left(\bar{t},\bar{y}\right),D\varphi\left(\bar{t},\bar{y}\right),D^{2}\varphi\left(\bar{t},\bar{y}\right)\right)=F^{\psi}\left(\bar{t},\bar{x},w\left(\bar{t},\bar{x}\right),D\xi\left(\bar{t},\bar{x}\right),D^{2}\xi\left(\bar{t},\bar{x}\right)\right).
\]
Obviously, $\left(\bar{t},\bar{y}\right)$ is a maximum of $u-\varphi$,
and since $u$ is a viscosity sub-solution of (\ref{eq for u}) we
have
\[
\dot{\varphi}\left(\bar{t},\bar{y}\right)+D\varphi\left(\bar{t},\bar{y}\right)\sigma\left(\bar{t},\bar{y}\right)\dot{z}\left(\bar{t}\right)\leq F\left(\bar{t},\bar{y},u\left(\bar{t},\bar{y}\right),D\varphi\left(\bar{t},\bar{y}\right),D^{2}\varphi\left(\bar{t},\bar{y}\right)\right).
\]
On the other hand, $\xi\left(t,x\right)=\varphi\left(t,\psi_{t}\left(x\right)\right)$
implies $\dot{\xi}\left(\bar{t},\bar{x}\right)=\dot{\varphi}\left(\bar{t},\bar{y}\right)+D\varphi\left(\bar{t},\bar{y}\right)\sigma\left(\bar{t},\bar{y}\right)\dot{z}\left(\bar{t}\right)$
and putting things together we see that
\[
\dot{\xi}\left(\bar{t},\bar{x}\right)\leq F^{\psi}\left(\bar{t},\bar{x},w\left(\bar{t},\bar{x}\right),D\xi\left(\bar{t},\bar{x}\right),D^{2}\xi\left(\bar{t},\bar{x}\right)\right)
\]
which says precisely that $w$ is a viscosity sub-solution of (\ref{eq for v}).
Replacing maximum by minimum and $\leq$ by $\geq$ in the preceding
argument, we see that if $u$ is a super-solution of (\ref{eq for u}),
then $w$ is a super-solution of (\ref{eq for v}).\\
Conversely, the same arguments show that if $v$ is a viscosity sub-
(resp.~super-) solution for (\ref{eq for v}), then $u\left(t,y\right)=w\left(t,\psi^{-1}\left(y\right)\right)$
is a sub- (resp.~super-) solution for (\ref{eq for u}). \end{proof}

We prepare the next lemma by agreeing that for a sufficiently smooth
function $\phi=\phi\left(t,r,x\right):\left[0,T\right]\times\mathbb{R}\times\mathbb{R}^{n}\rightarrow\mathbb{R}$
we shall write 
\begin{eqnarray*}
\dot{\phi} & = & \frac{\partial\phi\left(t,r,x\right)}{\partial t},\phi^{\prime}=\frac{\partial\phi\left(t,r,x\right)}{\partial r},\\
D\phi & = & \left(\frac{\partial\phi\left(t,r,x\right)}{\partial x^{i}}\right)_{i=1,\dots,n}\text{ and }D^{2}\phi=\left(\frac{\partial^{2}\phi\left(t,r,x\right)}{\partial x^{i}\partial x^{j}}\right)_{i,j=1,\dots,n}.
\end{eqnarray*}

\begin{lemma}{[}Outer transform{]} \label{Lemma_u_Transform}Let
$\phi=\phi\left(t,r,x\right)\in C^{1,2,2}$ and assume that $\forall\left(t,x\right)$,
$r\mapsto\phi\left(t,r,x\right)$ is an increasing diffeomorphism
on the real line. Then $u$ is a subsolution of $\partial_{t}u+F\left(t,x,u,Du,D^{2}u\right)=0,$
$u\left(0,.\right)=u_{0}\left(.\right)$ if and only if
\[
v\left(t,x\right)=\phi^{-1}\left(t,u\left(t,x\right),x\right)
\]
is a subsolution of $\partial_{t}v+^{\phi}F\left(t,x,v,Dv,D^{2}v\right)=0,$
$v\left(0,.\right)=\phi^{-1}\left(0,u_{0}\left(x\right),x\right)$
with 
\begin{eqnarray}
^{\phi}F\left(t,x,r,p,X\right) & = & \frac{\dot{\phi}}{\phi^{\prime}}+\frac{1}{\phi^{\prime}}F\left(t,x,\phi,D\phi+\phi^{\prime}p,\right.\label{EqFPhi}\\
 &  & \left.\phi^{\prime\prime}p\otimes p+D\phi^{\prime}\otimes p+p\otimes D\phi^{\prime}+D^{2}\phi+\phi^{\prime}X\right)\notag
\end{eqnarray}
where $\phi$ and all derivatives are evaluated at $\left(t,r,x\right)$.
The same statement holds if one replaces the word subsolution by supersolution
throughout. \end{lemma}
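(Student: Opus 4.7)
The plan is to mirror the test-function argument used for the Inner Transform (Lemma~\ref{Lemma_Gradient_Transform}). As a warm-up, and to pin down the formula \eqref{EqFPhi} from purely computational considerations, I would first treat the smooth case: if $v$ is $C^{1,2}$ and $u(t,x) = \phi(t,v(t,x),x)$, then the chain rule gives
\[
\partial_t u = \dot\phi + \phi'\, \partial_t v, \qquad Du = D\phi + \phi'\, Dv,
\]
\[
D^2 u = D^2\phi + D\phi' \otimes Dv + Dv \otimes D\phi' + \phi''\, Dv \otimes Dv + \phi'\, D^2 v,
\]
where $\phi$ and its derivatives are evaluated at $(t,v(t,x),x)$. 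Substituting into $\partial_t u + F(t,x,u,Du,D^2u) = 0$ and dividing by the strictly positive $\phi'$ (monotonicity of $\phi$ in $r$) reproduces exactly $\partial_t v + {}^\phi F(t,x,v,Dv,D^2v) = 0$, which also serves to derive the expression \eqref{EqFPhi}.

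For the viscosity subsolution direction, I would let $\xi \in C^2$ be a test function with $v - \xi$ attaining a local maximum at $(\bar t, \bar x)$. Set $c := v(\bar t, \bar x) - \xi(\bar t, \bar x)$ and define the transplanted test function
\[
\varphi(t,x) := \phi\bigl(t,\, \xi(t,x) + c,\, x\bigr).
\]
Since $r \mapsto \phi(t,r,x)$ is increasing, the inequality $v(t,x) \le \xi(t,x) + c$ near $(\bar t,\bar x)$ implies $u(t,x) = \phi(t,v(t,x),x) \le \varphi(t,x)$ near $(\bar t,\bar x)$, with equality at $(\bar t,\bar x)$. Hence $u - \varphi$ has a local maximum at $(\bar t,\bar x)$, and the viscosity subsolution property of $u$ gives
\[
\partial_t \varphi(\bar t, \bar x) + F\bigl(\bar t, \bar x, u(\bar t, \bar x), D\varphi(\bar t, \bar x), D^2\varphi(\bar t, \bar x)\bigr) \le 0.
\]
Because $\xi(\bar t,\bar x) + c = v(\bar t,\bar x)$, the same chain-rule identities (with $\xi$ playing the role of $v$) compute $\partial_t\varphi, D\varphi, D^2\varphi$ at $(\bar t,\bar x)$, with all $\phi$-derivatives evaluated at $(\bar t, v(\bar t,\bar x), \bar x)$. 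Substituting and dividing by $\phi' > 0$ yields precisely
\[
\partial_t \xi(\bar t,\bar x) + {}^\phi F\bigl(\bar t, \bar x, v(\bar t,\bar x), D\xi(\bar t,\bar x), D^2\xi(\bar t,\bar x)\bigr) \le 0,
\]
which is the subsolution inequality for $v$ with test function $\xi$.

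The supersolution case is identical after replacing local maxima by local minima and reversing the inequalities; monotonicity of $\phi$ in $r$ again preserves the touching condition, and $\phi' > 0$ preserves the direction of the inequality after division. The converse implications follow by running the same scheme with $(t,r,x)\mapsto\phi^{-1}(t,r,x)$, which is also an increasing diffeomorphism in $r$ by hypothesis. There is no genuine analytic obstacle: the only delicate step is the chain-rule bookkeeping for the second-order terms, where the cross terms $D\phi' \otimes Dv + Dv \otimes D\phi'$ and the $D^2\phi$ contribution arise precisely because $\phi$ depends on $x$ in addition to $r$ (this is what distinguishes the present statement from a purely outer $\phi(t,r)$ transform, and is the most likely source of computational error). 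Once the smooth identity is in hand, the viscosity argument is a direct transplantation of the proof of Lemma~\ref{Lemma_Gradient_Transform}.
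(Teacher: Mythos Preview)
Your proposal is correct. The paper's proof differs only in presentation: it works with parabolic semijets rather than test functions, taking $(a,p,X)\in\mathcal{P}^{2;+}v(s,z)$ and Taylor-expanding $\phi$ in all three arguments to exhibit the corresponding element of $\mathcal{P}^{2;+}u(s,z)$, whereas you compose $\phi$ with the test function directly and invoke the chain rule. These are the two standard equivalent formulations of the viscosity condition, and the underlying computation is the same. For the converse, the paper carries out explicitly the algebraic verification that ${}^{\phi^{-1}}({}^{\phi}F)=F$ via the inverse-function identities; your appeal to ``run the same scheme with $\phi^{-1}$'' implicitly relies on this identity, which does follow immediately from your smooth warm-up computation but would benefit from one sentence making that dependence explicit.
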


\begin{proof} $\left(\Longrightarrow\right)$ We show the first implication,
i.e. assume $u$ is a subsolution of $\partial_{t}u+F=0$ and set
$v\left(t,x\right)=\phi^{-1}\left(t,u\left(t,x\right),x\right)$.
By definition, $\left(a,p,X\right)\in\mathcal{P}^{2;+}v\left(s,z\right)$
(the parabolic superjet, cf.~\cite[section 8]{MR1118699UserGuide})
iff
\[
v\left(t,x\right)\leq v\left(s,z\right)+a\left(t-s\right)+p\cdot\left(x-z\right)+\frac{1}{2}\left(x-z\right)^{T}\cdot X\cdot\left(x-z\right)+o\left(\left\vert t-s\right\vert +\left\vert x-z\right\vert ^{2}\right)
\]
as $\left(t,x\right)\rightarrow\left(s,z\right)\text{.}$Since $\phi\left(t,.,x\right)$
is increasing, 
\[
\phi\left(t,v\left(t,x\right),x\right)\leq\phi\left(t,*,x\right)
\]
with 
\[
*=v\left(s,z\right)+a\left(t-s\right)+p\cdot\left(x-z\right)+\frac{1}{2}\left(x-z\right)^{T}\cdot X\cdot\left(x-z\right)+o\left(\left\vert t-s\right\vert +\left\vert x-z\right\vert ^{2}\right)
\]
and using a Taylor expansion on $\phi$ in all three arguments we
see that the right hand side equals 
\begin{eqnarray*}
 &  & \phi\left(s,v\left(s,z\right),z\right)+\dot{\phi}_{s,v\left(s,z\right),z}\left(t-s\right)+\phi_{s,v\left(s,z\right),z}^{\prime}a\left(t-s\right)+\phi_{s,v\left(s,z\right),z}^{\prime}p\cdot\left(x-z\right)\\
 &  & +\frac{1}{2}\phi_{s,v\left(s,z\right),z}^{\prime}\left(x-z\right)^{T}\cdot X\cdot\left(x-z\right)+D\phi_{s,v\left(s,z\right),z}\cdot\left(x-z\right)+\frac{1}{2}\left(x-z\right)^{T}\cdot D^{2}\phi_{s,v\left(s,z\right),z}\cdot\left(x-z\right)\\
 &  & +\left(x-z\right)^{T}\cdot\left(D\left(\phi^{\prime}\right)\right)_{s,v\left(s,z\right),z}\otimes p\cdot\left(x-z\right)\\
 &  & +\left(x-z\right)^{T}\cdot p\otimes\left(D\phi\right)_{s,v\left(s,z\right),z}^{\prime}\cdot\left(x-z\right)\\
 &  & +\left(x-z\right)^{T}\cdot\phi_{s,v\left(s,z\right),z}^{\prime\prime}p\otimes p\cdot\left(x-z\right)+o\left(\left\vert t-s\right\vert +\left\vert x-z\right\vert ^{2}\right)\text{ as }\left(s,z\right)\rightarrow\left(t,x\right)
\end{eqnarray*}
Hence, 
\begin{eqnarray*}
 &  & \left(\dot{\phi}_{s,v\left(s,z\right),z}+\phi_{s,v\left(s,z\right),z}^{\prime}a,D\phi_{s,v\left(s,z\right),z}+\phi_{s,v\left(s,z\right),z}^{\prime}p,\right.\\
 &  & \left.\phi_{s,v\left(s,z\right),z}^{\prime\prime}p\otimes p+D\left(\phi^{\prime}\right)_{s,v\left(s,z\right),z}\otimes p+p\otimes\left(D\phi\right)_{s,v\left(s,z\right),z}^{\prime}+D^{2}\phi_{s,v\left(s,z\right),z}+\phi_{s,v\left(s,z\right),z}^{\prime}X\right)
\end{eqnarray*}
belongs to $\mathcal{P}^{2;+}u\left(s,z\right)$ and since $u$ is
a subsolution this immediately shows 
\begin{eqnarray*}
\dot{\phi}_{s,v\left(s,z\right),z}+\phi_{\left(s,v\left(s,z\right),z\right)}^{\prime}a+F\left(s,z,\phi_{\left(s,v\left(s,z\right),z\right)},D\phi_{s,v\left(s,z\right),z}+\phi_{s,v\left(s,z\right),z}^{\prime}p,\right.\\
\left.\phi_{s,v\left(s,z\right),z}^{\prime\prime}p\otimes p+D\left(\phi^{\prime}\right)_{s,v\left(s,z\right),z}\otimes p+p\otimes\left(D\phi\right)_{s,v\left(s,z\right),z}^{\prime}+D^{2}\phi_{s,v\left(s,z\right),z}+\phi_{s,v\left(s,z\right),z}^{\prime}X\right) & \leq & 0\text{.}
\end{eqnarray*}
Dividing by $\phi^{\prime}\ $($>0$) shows that $v$ is a subsolution
of $\partial_{t}v+F^{\phi}=0$.

$\left(\Longleftarrow\right)$ Assume $v$ is a subsolution of $\partial_{t}v+^{\phi}F=0$,
$^{\phi}F$ defined as in $\left(\ref{EqFPhi}\right)$ for some $F.$
Set $u\left(t,x\right):=\phi\left(t,v\left(t,x\right),x\right)$.
By above argument we know that $v$ is a subsolution of $^{\phi^{-1}}\left(^{\phi}F\right)\left(t,x,r,p,X\right).$
For brevity write $\psi\left(t,.,x\right)=\phi^{-1}\left(t,.,x\right).$
Then
\begin{eqnarray*}
 &  & \,^{\phi^{-1}}\left(^{\phi}F\right)\left(t,x,r,p,X\right)\\
 & = & \frac{\psi_{t,r,x}}{\psi_{t,r,x}^{\prime}}+\frac{1}{\psi_{t,r,x}^{\prime}}\,^{\phi}F\left(t,x,\psi_{\left(t,r,x\right)},D\psi_{t,r,x}+\psi_{t,r,x}^{\prime}p,\right.\\
 &  & \left.\psi_{t,r,x}^{\prime\prime}p\otimes p+D\left(\psi^{\prime}\right)_{t,r,x}\otimes p+p\otimes\left(D\psi\right)_{t,r,x}^{\prime}+D^{2}\psi_{t,r,x}+\psi_{t,r,x}^{\prime}X\right)\\
 & = & \frac{\psi_{t,r,x}}{\psi_{t,r,x}^{\prime}}+\frac{1}{\psi_{t,r,x}^{\prime}}\left[\frac{\dot{\phi}_{t,\psi_{t,r,x},x}}{\phi_{t,\psi_{t,r,x},x}^{\prime}}+\right.\\
 &  & \frac{1}{\phi_{t,\psi_{t,r,x},x}^{\prime}}F\left(t,x,\phi\left(t,\psi_{t,r,x},x\right),D\phi_{t,\psi_{t,r,x},x}+\phi_{t,\psi_{t,r,x},x}^{\prime}\left\{ D\psi_{t,r,x}+\psi_{t,r,x}^{\prime}p\right\} \right.,\\
 &  & \phi_{t,\psi_{t,r,x},x}^{\prime\prime}p\otimes p+D\left(\phi^{\prime}\right)_{t,\psi_{t,r,x},x}\otimes p+p\otimes\left(D\phi\right)_{t,\psi_{t,r,x},x}^{\prime}+D^{2}\phi_{t,\psi_{t,r,x},x}\\
 &  & \left.\left.+\phi_{t,\psi_{t,r,x},x}^{\prime}\left\{ \psi_{t,r,x}^{\prime\prime}p\otimes p+D\left(\psi^{\prime}\right)_{t,r,x}\otimes p+p\otimes\left(D\psi\right)_{t,r,x}^{\prime}+D^{2}\psi_{t,r,x}+\psi_{t,r,x}^{\prime}X\right\} \right)\right].
\end{eqnarray*}
Using several times equalities of the type $\left(f\circ f^{-1}\right)^{\prime}=f_{f^{-1}}^{\prime}\left(f^{-1}\right)^{\prime}=id$
cancels the terms involving $\phi,\psi$ and their derivatives and
we are left with $F$, i.e.
\[
\,^{\phi^{-1}}\left(^{\phi}F\right)=F\text{.}
\]
This finishes the proof. \end{proof}

\begin{corollary}{[}Transformation 2{]} Assume $\nu=\left(\nu_{1},\dots,\nu_{d}\right)\subset C_{b}^{0,2}\left(\left[0,T\right]\times\mathbb{R}^{n}\right)$
(i.e.~continuous, bounded and twice differentiable in the second
variable with bounded derivatives). Assume $\phi=\phi\left(t,x,r\right)$
is determined by the ODE
\[
\dot{\phi}=\phi\,\sum_{j=1}^{d}\nu_{j}\left(t,x\right)\dot{z}_{t}^{j}\equiv\phi\,\nu\left(t,x\right)\cdot\dot{z}_{t},\,\,\,\phi\left(0,x,r\right)=r.
\]
Define $L=L\left(t,x,r,p,X\right)$ by
\[
L=-\text{Tr}\left[A\left(t,x\right)\cdot X\right]+b\left(t,x\right)\cdot p+c\left(t,x,r\right);
\]
define also
\begin{eqnarray}
^{\phi}L\left(t,x,r,X\right) & = & -\text{Tr}\left[A\left(t,x\right)\cdot X\right]+\,^{\phi}b\left(t,x\right)\cdot p+\,^{\phi}c\left(t,x,r\right)\label{phiL}
\end{eqnarray}
where 
\begin{align*}
^{\phi}b\left(t,x\right)\cdot p & \equiv b\left(t,x\right)\cdot p-\frac{2}{\phi^{\prime}}\text{Tr}\left[A\left(t,x\right)\cdot D\phi^{\prime}\otimes p\right]\\
^{\phi}c\left(t,x,r\right) & \equiv-\frac{1}{\phi^{\prime}}\text{Tr}\left[A\left(t,x\right)\cdot(D^{2}\phi)\right]+\frac{1}{\phi^{\prime}}b\left(t,x\right)\cdot\left(D\phi\right)+\frac{1}{\phi^{\prime}}c\left(t,x,\phi\right)
\end{align*}
with $\phi$ and all its derivatives evaluated at $\left(t,r,x\right)$.
Then
\[
\partial_{t}w+L\left(t,x,w,Dw,D^{2}w\right)-w\,\nu\left(t,x\right)\cdot\dot{z}\left(t\right)=0
\]
if and only if $v\left(t,x\right)=\phi^{-1}\left(t,w\left(t,x\right),x\right)$
satisfies
\[
\partial_{t}v+^{\phi}L\left(t,x,v,Dv,D^{2}v\right)=0.
\]
\end{corollary}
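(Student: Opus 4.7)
The plan is to reduce the corollary to a direct application of Lemma~\ref{Lemma_u_Transform} (the outer transform), with the PDE for $w$ rewritten so that the noise term is absorbed into the nonlinearity $F$. The key observations are that $\phi$ is \emph{linear} in $r$ (so $\phi'' = 0$ and $\phi/\phi' = r$), and that the term $\dot\phi/\phi'$ appearing in formula~\eqref{EqFPhi} will cancel exactly the noise contribution.

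First, I would set
\[
F(t,x,r,p,X) := L(t,x,r,p,X) - r\,\nu(t,x)\cdot \dot z_t,
\]
so that the PDE for $w$ becomes $\partial_t w + F(t,x,w,Dw,D^2 w) = 0$; the continuity and degenerate ellipticity of $F$ are immediate from those of $L$. Next, I would verify that $\phi$ satisfies the hypotheses of Lemma~\ref{Lemma_u_Transform}. Because the defining ODE is linear in $\phi$, we have the explicit formula
\[
\phi(t,r,x) = r\,\exp\!\Bigl(\int_0^t \nu(s,x)\cdot \dot z_s\, ds\Bigr),
\]
from which $\phi' = \exp(\int_0^t \nu(s,x)\cdot \dot z_s\, ds) > 0$ (giving the increasing diffeomorphism property in $r$), $\phi'' \equiv 0$, and $\phi/\phi' \equiv r$. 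The regularity $\phi \in C^{1,2,2}$ follows from the assumption $\nu \in C_b^{0,2}$ together with $z \in C^1$.

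Applying Lemma~\ref{Lemma_u_Transform} to this $F$, the equivalence between sub-/supersolution properties of $w$ and of $v = \phi^{-1}(t,w(t,x),x)$ is automatic, and the entire content of the corollary reduces to the identity ${}^\phi F = {}^\phi L$. To establish it, I would plug into formula~\eqref{EqFPhi}. The term $\dot\phi/\phi'$ contributes $(\phi/\phi')\,\nu\cdot\dot z_t = r\,\nu(t,x)\cdot\dot z_t$. Inside $F$, the noise part $-\phi\,\nu\cdot\dot z_t$, after division by $\phi'$, contributes $-r\,\nu(t,x)\cdot\dot z_t$, exactly cancelling the previous one. For the remaining $L$-part, I would use $\phi''=0$ to kill the $p\otimes p$ term, exploit symmetry of $A$ to combine $D\phi'\otimes p + p\otimes D\phi'$ into $2\,\mathrm{Tr}[A\cdot D\phi'\otimes p]$, and finally divide the $b\cdot(D\phi + \phi' p)$, $\mathrm{Tr}[A\cdot(D^2\phi + \phi' X)]$, and $c(t,x,\phi)$ pieces by $\phi'$. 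Matching the resulting expression term-by-term with the definitions of ${}^\phi b$ and ${}^\phi c$ in \eqref{phiL} yields ${}^\phi F = {}^\phi L$.

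The main obstacle is purely the bookkeeping in the last step: tracking the cancellation of the $\nu\cdot\dot z_t$ terms and verifying that the curvature contributions from $D\phi'$, $D\phi$ and $D^2\phi$ reassemble into exactly the ${}^\phi b$ and ${}^\phi c$ of the statement. There is no new conceptual content beyond the outer transform lemma and the special affine-in-$r$ structure of $\phi$.
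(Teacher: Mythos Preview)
Your proposal is correct and follows essentially the same approach as the paper: apply the outer transform lemma to $F = L - r\,\nu\cdot\dot z_t$, use the explicit exponential formula for $\phi$ to read off $\phi''=0$ and $\phi/\phi'=r$, observe that $\dot\phi/\phi'$ cancels the noise contribution, and then collect the remaining terms of $L$ into the claimed ${}^\phi b$ and ${}^\phi c$. The paper's proof is slightly terser but the ideas and the order of the computation are identical.
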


\begin{proof} Obviously, 
\[
\phi\left(t,x,r\right)=r\exp\left(\int_{0}^{t}\sum_{j=1}^{d}\nu_{j}\left(s,x\right)\dot{z}_{s}^{j}\right).
\]
This implies that $\phi^{\prime}=\phi/r$ and $D\phi^{\prime}$ do
not depend on $r$ so that indeed $^{\phi}b\left(t,x\right)$ defined
above has no $r$ dependence. Also note that $\phi^{\prime\prime}=0$
and $\dot{\phi}/\phi=d\cdot\dot{z}\equiv\sum_{j=1}^{d}d_{j}\left(t,x\right)\dot{z}_{t}^{j}$.
It follows, for general $F$, that
\begin{eqnarray*}
^{\phi}F\left(t,x,r,p,X\right) & = & r\, d\cdot\dot{z}+\frac{1}{\phi^{\prime}}F\left(t,x,\phi,D\phi+\phi^{\prime}p,\right.\\
 &  & \left.D\phi^{\prime}\otimes p+p\otimes D\phi^{\prime}+D^{2}\phi+\phi^{\prime}X\right)
\end{eqnarray*}
and specializing to $F=L-w\nu\cdot\dot{z}$, of the assumed (semi-)
linear form, we see that
\begin{eqnarray*}
^{\phi}L & = & -\frac{1}{\phi^{\prime}}\text{Tr}\left[A\left(t,x\right)\cdot(D\phi^{\prime}\otimes p+p\otimes D\phi^{\prime}+D^{2}\phi+\phi^{\prime}X)\right]\\
 &  & +\frac{1}{\phi^{\prime}}b\left(t,x\right)\cdot\left(D\phi+\phi^{\prime}p\right)+\frac{1}{\phi^{\prime}}c\left(t,x,\phi\right)
\end{eqnarray*}
where $\phi$ and all derivatives are evaluated at $\left(t,r,x\right).$
Observe that $^{\phi}L$ is again linear in $X$ and $p$. It now
suffices to collect the corresponding terms to obtain (\ref{phiL}).
\end{proof}

We shall need another (outer)transform to remove additive noise.

\begin{lemma}{[}Transformation 3{]} Let $g\in C\left(\left[0,T\right]\times\mathbb{R}^{n},\mathbb{R}^{d}\right)$
and set $\varphi\left(t,x\right)=\int_{0}^{t}g\left(s,x\right)dz_{s}=\sum_{i=1}^{d}\int_{0}^{t}g_{i}\left(s,x\right)dz_{s}^{i}$.
Define
\begin{eqnarray*}
L\left(t,x,r,p,X\right) & = & -\text{Tr}\left[A\left(t,x\right)\cdot X\right]+b\left(t,x\right)\cdot p+c\left(t,x,r\right);\\
L_{\varphi}\left(t,x,r,p,X\right) & = & -\text{Tr}\left[A\left(t,x\right)\cdot X\right]+b\left(t,x\right)\cdot p+c_{\varphi}\left(t,x,r\right)\\
\text{with }c_{\varphi}\left(t,x,r\right) & = & \text{Tr}\left[A\left(t,x\right)\cdot D^{2}\varphi\left(t,x\right)\right]-b\left(t,x\right)\cdot D\varphi\left(t,x\right)+c\left(t,x,r-\varphi\left(t,x\right)\right)
\end{eqnarray*}
Then $v$ solves
\[
\partial_{t}v+L\left(t,x,v,Dv,D^{2}v\right)-g\left(t,x\right)\cdot\dot{z}\left(t\right)=0
\]
if and only if $\tilde{v}\left(t,x\right)=v\left(t,x\right)+\varphi\left(t,x\right)$
solves
\[
\partial_{t}\tilde{v}+L_{\varphi}\left(t,x,\tilde{v},D\tilde{v},D^{2}\tilde{v}\right)=0.
\]
\end{lemma}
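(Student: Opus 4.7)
The plan is to recognize this statement as a special case of Lemma \ref{Lemma_u_Transform} (the outer transform) corresponding to the affine change of variables $\phi(t, r, x) = r + \varphi(t, x)$, which is trivially an increasing $C^{1,2,2}$ diffeomorphism in $r$ for every $(t, x)$ (implicitly one needs enough smoothness of $g$ in $x$ so that $\varphi \in C^{1,2}$, just as in the earlier lemmas; this is automatic when $z$ is smooth and $g$ is $C^{0,2}$ in $x$). An alternative, should one prefer not to invoke the general lemma, is to repeat the parabolic-semijet argument used to prove Lemma \ref{Lemma_u_Transform} directly with this $\phi$; the two approaches carry identical content.

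The decisive observation is that for this affine $\phi$ every derivative of $\phi$ collapses onto a derivative of $\varphi$, and all $r$-derivatives beyond the first are trivial:
\[
\dot\phi = \dot\varphi = g(t,x) \cdot \dot z(t), \qquad \phi' \equiv 1, \qquad \phi'' \equiv 0, \qquad D\phi = D\varphi, \qquad D\phi' \equiv 0, \qquad D^2\phi = D^2\varphi.
\]
Substituting into formula (EqFPhi), every $\phi''$- and $D\phi'$-weighted quadratic and cross term drops, and the prefactor $1/\phi'$ equals one, leaving the clean identity
\[
{}^{\phi}F(t, x, r, p, X) \;=\; \dot\varphi \;+\; F\bigl(t, x,\; r + \varphi,\; p + D\varphi,\; X + D^2\varphi\bigr).
\]

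The final step is to take $F = L_\varphi$ and expand using linearity of $L_\varphi$ in $(X, p)$: the shifts of $X$ by $D^2\varphi$ and of $p$ by $D\varphi$ produce extra terms $-\mathrm{Tr}[A \cdot D^2\varphi]$ and $b \cdot D\varphi$, which are cancelled precisely by the $\mathrm{Tr}[A \cdot D^2\varphi]$ and $-b \cdot D\varphi$ built into the definition of $c_\varphi$, while the $r$-slot simplifies via $c(t, x, (r + \varphi) - \varphi) = c(t, x, r)$ to reconstruct the original $c$. Adding back the $\dot\varphi = g \cdot \dot z$ contribution from the first term then yields exactly the operator appearing in the equation for $v$, and Lemma \ref{Lemma_u_Transform} gives the equivalence of sub- and super-solutions in both directions. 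I do not anticipate any genuine obstacle: the three correction terms in $c_\varphi$ are hand-designed to absorb the three shifts (in the $D^2$-slot, the $D$-slot, and the $r$-slot of $L$) introduced by $\tilde v = v + \varphi$, and the verification amounts to bookkeeping with no new analytic content beyond what is already in Lemma \ref{Lemma_u_Transform}.
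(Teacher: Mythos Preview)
Your plan is sound: the paper's own proof is simply ``Left to reader'', so any clean argument suffices, and recognising the additive shift as the special case $\phi(t,r,x)=r+\varphi(t,x)$ of Lemma~\ref{Lemma_u_Transform} is the natural route. Your collapse of \eqref{EqFPhi} via $\phi'\equiv1$, $\phi''\equiv0$, $D\phi'\equiv0$ is correct, and the cancellation of the three correction terms built into $c_\varphi$ against the three slot-shifts in $(r,p,X)$ is exactly the point of the lemma.

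One word of caution for when you write out the last line: a careful run of the algebra actually gives ${}^{\phi}(L_\varphi)=L+g\cdot\dot z$, not $L-g\cdot\dot z$; equivalently, direct substitution yields $\partial_t\tilde v+L_\varphi(\tilde v)=\partial_t v+L(v)+g\cdot\dot z$, since $\partial_t\tilde v=\partial_t v+\dot\varphi$ and $L_\varphi(\tilde v)=L(v)$. This is a harmless sign slip in the printed statement (it becomes correct with $\tilde v=v-\varphi$, or with $\varphi$ defined with the opposite sign) and not a defect in your method; just be aware so you do not chase a phantom error when the signs fail to match.
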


\begin{proof} Left to reader. \end{proof}

\subsection{The full transformation}

As before, let
\[
L\left(t,x,r,p,X\right):=-\text{Tr}\left[A\left(t,x\right)X\right]+b\left(t,x\right)\cdot p+c\left(t,x,r\right)
\]
where $A:\left[0,T\right]\times\mathbb{R}^{n}\rightarrow\mathbb{S}^{n}$,
$b:\left[0,T\right]\times\mathbb{R}^{n}\rightarrow\mathbb{R}^{n}$,
$f:\left[0,T\right]\times\mathbb{R}^{n}\times\mathbb{R}\rightarrow\mathbb{R}$.
Let us also define the following (linear, first order) differential
operators,
\begin{eqnarray}
M_{k}\left(t,x,u,Du\right) & = & \sigma_{k}\left(t,x\right)\cdot Du\text{ for }k=1,\dots,d_{1}\label{DefMk}\\
M_{d_{1}+k}\left(t,x,u,Du\right) & = & u\,\nu_{k}\left(t,x\right)\text{ for }k=1,\dots,d_{2}\notag\\
M_{d_{1}+d_{2}+k}\left(t,x,u,Du\right) & = & g_{k}\left(t,x\right)\text{ for }k=1,\dots,d_{3}.\notag
\end{eqnarray}
The combination of transformations 1,2 and 3 leads to the following

\begin{proposition} \label{Prop_Transform_to_PDE}Let $z^{1}\in C^{1}\left(\left[0,T\right],\mathbb{R}^{d_{1}}\right),$
$\sigma=\left(\sigma_{1},\ldots,\sigma_{d_{1}}\right)\subset C_{b}^{2}\left(\left[0,T\right]\times\mathbb{\mathbb{R}}^{n},\mathbb{\mathbb{R}}^{n}\right)$
and denote the ODE flow of $dy=\sigma\left(t,y\right)dz$ with $\psi,$
i.e. $\psi:\left[0,T\right]\times\mathbb{\mathbb{R}}^{n}\mathbb{\rightarrow R}^{n}$
satisfies
\begin{equation}
\dot{\psi}\left(t,x\right)=\sigma\left(t,\psi\left(t,x\right)\right)\dot{z}_{t}^{1}\text{, }\mathbb{\psi}\left(0,x\right)=x\in\mathbb{R}^{n}.\label{Transform1_psi}
\end{equation}
Further, let $z^{2}\in C^{1}\left(\left[0,T\right],\mathbb{R}^{d_{2}}\right)$
and let $\nu=\left(\nu_{1},\dots,\nu_{d_{2}}\right)$ be a collection
of $C_{b}^{0,2}\left(\left[0,T\right]\times\mathbb{R}^{n},\mathbb{\mathbb{R}}\right)$
functions and define $\phi=\phi\left(t,r,x\right)$ as solution to
the linear ODE 
\begin{equation}
\dot{\phi}=\phi\,\underset{\equiv d^{\psi}\left(t,x\right)}{\underbrace{\nu\left(t,\psi_{t}\left(x\right)\right)}}\dot{z}_{t}^{2},\,\,\,\phi\left(0,r,x\right)=r\in\mathbb{R}.\label{Transform2_phi}
\end{equation}
Further, let $z^{3}\in C^{1}\left(\left[0,T\right],\mathbb{R}^{d_{3}}\right)$
and define for given $g=\left(g_{1},\ldots,g_{d_{3}}\right)\in C\left(\left[0,T\right]\times\mathbb{R}^{n},\mathbb{R}^{d_{3}}\right)$,
$\varphi\left(t,x\right)$ as the integral%
\footnote{Since $\phi$ is linear in $r$, there is no $r$ dependence in its
derivative $\phi^{\prime}$.%
}
\begin{equation}
\varphi\left(t,x\right)=\int_{0}^{t}\,^{\phi}g^{\psi}\left(s,x\right)d\dot{z}_{s}^{3},\label{Transform3_a}
\end{equation}
\[
\text{where \thinspace}\,^{\phi}g^{\psi}\left(t,x\right)=\frac{1}{\phi^{\prime}\left(t,x\right)}g\left(t,\psi_{t}\left(x\right)\right).
\]
At last, set $z_{t}:=\left(z_{t}^{1},z_{t}^{2},z_{t}^{3}\right)\in\mathbb{R}^{d_{1}}\oplus\mathbb{R}^{d_{2}}\oplus\mathbb{R}^{d_{3}}\cong\mathbb{R}^{d}$.
Then $u$ is a viscosity solution of 
\begin{eqnarray}
\partial_{t}u+L\left(t,x,u,Du,D^{2}u\right) & = & \Lambda\left(t,x,u,Du\right)\dot{z}_{t},\text{ }\label{Eq_Linear_RPDE}\\
u\left(0,x\right) & = & u_{0}\left(x\right),
\end{eqnarray}
iff $\tilde{u}\left(t,x\right)=\phi^{-1}\left(t,u\left(t,\psi\left(t,x\right)\right),x\right)+\varphi\left(t,x\right)$
is a viscosity solution of
\begin{eqnarray}
\partial_{t}\tilde{u}+\tilde{L}\left(t,x,\tilde{u},D\tilde{u},D^{2}\tilde{u}\right) & = & 0\label{Eq_Transformed_to_PDE}\\
\tilde{u}\left(0,x\right) & = & u_{0}\left(x\right)
\end{eqnarray}
where $\tilde{L}=\,\left[^{\phi}\left(L^{\psi}\right)\right]_{\varphi}$
is obtained via transformations 1,2 and 3 (in the given order). \end{proposition}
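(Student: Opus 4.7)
The plan is to prove the equivalence by applying Transformations~1, 2 and 3 in that order, at each stage obtaining an equivalent viscosity problem with one fewer noise term on the right-hand side. Because $z^1,z^2,z^3$ are smooth and the coefficients meet the hypotheses of each lemma, each application is immediate; what remains is to verify that the noise term the next transformation is designed to remove really does appear on the right-hand side in the exact form required.

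First I would apply the Inner Transform with $\psi$ from (\ref{Transform1_psi}) and set $u^1(t,x):=u(t,\psi(t,x))$. Specializing Lemma~\ref{Lemma_Gradient_Transform} to $F=L-\Lambda\cdot\dot z$ (with all the $M_k$-terms included on the right) shows that $u$ is a viscosity solution of (\ref{Eq_Linear_RPDE}) iff $u^1$ is a viscosity solution of
\[
\partial_t u^1+L^\psi(t,x,u^1,Du^1,D^2u^1)=u^1\,\nu(t,\psi_t(x))\cdot\dot z^2_t+g(t,\psi_t(x))\cdot\dot z^3_t,
\]
since the gradient-noise term is absorbed by the chain-rule contribution $\dot\psi=\sigma\dot z^1$, while the remaining two noise terms (which do not involve $Du$) simply have their spatial argument precomposed with $\psi_t$. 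This produces exactly the $\nu^\psi(t,x)=\nu(t,\psi_t(x))$ and $g^\psi(t,x)=g(t,\psi_t(x))$ required in the next step.

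Next, with $\phi$ the scalar flow (\ref{Transform2_phi}) driven by $\nu^\psi$, I would apply Transformation~2 to the equation for $u^1$ (absorbing $g^\psi\cdot\dot z^3$ into the $c$-coefficient of the semi-linear operator, which is admissible since $g^\psi$ is independent of $r,p,X$) and set $u^{12}(t,x):=\phi^{-1}(t,u^1(t,x),x)$. A direct computation using the formula for $^\phi F$ shows that the multiplicative $\dot z^2$-term cancels against $\dot\phi/\phi'=r\,\nu^\psi\cdot\dot z^2$, while the additive $\dot z^3$-term is divided by $\phi'$ (which is $r$-independent, because $\phi$ is linear in $r$), producing the coefficient $^\phi g^\psi=g^\psi/\phi'$. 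Hence $u^{12}$ is a viscosity solution of $\partial_t u^{12}+{}^\phi(L^\psi)=\,^\phi g^\psi\cdot\dot z^3_t$. Finally I would apply Transformation~3 with the additive offset $\varphi$ from (\ref{Transform3_a}): setting $\tilde u:=u^{12}+\varphi$ yields precisely (\ref{Eq_Transformed_to_PDE}) with $\tilde L=[{}^\phi(L^\psi)]_\varphi$. Composing the three substitutions gives $\tilde u(t,x)=\phi^{-1}(t,u(t,\psi(t,x)),x)+\varphi(t,x)$, and the initial conditions match since $\psi_0=\mathrm{id}$, $\phi(0,r,x)=r$ and $\varphi(0,\cdot)=0$.

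The main obstacle is the bookkeeping in Step~2: one must check that the general formula for $^\phi F$ splits cleanly into (i) the cancellation of the multiplicative $\dot z^2$-noise, and (ii) a surviving additive $\dot z^3$-term of the precise form $^\phi g^\psi\cdot\dot z^3$. This hinges on $\phi$ being linear in $r$ (so $\phi'$ does not reintroduce an $r$-dependence into any driving coefficient) and on the driver of the $\phi$-ODE being $\nu^\psi$ rather than $\nu$; otherwise the cancellation would be only partial. Once this is in place, both directions of the equivalence follow directly from the three lemmas, each of which is already stated as a two-sided equivalence valid for sub- and supersolutions.
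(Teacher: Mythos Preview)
Your proposal is correct and follows essentially the same route as the paper: apply Transformation~1 to remove the $\dot z^1$-gradient noise (which simultaneously precomposes $\nu$ and $g$ with $\psi$), then Transformation~2 to cancel the multiplicative $\dot z^2$-term (leaving the additive noise divided by $\phi'$, i.e.\ ${}^\phi g^\psi\cdot\dot z^3$), and finally Transformation~3 to absorb the remaining additive $\dot z^3$-term. Your explicit remarks on why $\phi'$ is $r$-independent and why the $\phi$-ODE must be driven by $\nu^\psi$ are exactly the bookkeeping points the paper's proof relies on.
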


\begin{remark} Transformation 2 and 3 could have been performed in
one step, by considering
\[
\dot{\phi}=\phi\,\nu^{\psi}\left(t,x\right)\cdot\dot{z}_{t}^{2}+g^{\psi}\left(t,x\right)\cdot\dot{z}_{t}^{3},\,\,\,\phi\left(t,r,x\right)|_{t=0}=r.
\]
Indeed, the usual variation of constants formula gives immediately
\[
\phi\left(t,x\right)=r\exp\left(\int_{0}^{t}\nu^{\psi}\left(s,x\right)dz_{s}^{2}\right)+\int_{0}^{t}e^{\left(\int_{s}^{t}\nu^{\psi}\left(\cdot,x\right)dz^{2}\right)}g^{\psi}\left(s,x\right)\cdot dz_{s}^{3}
\]
and one easily sees that transformations 2 and 3 just split above
expression in two terms; with the benefit of keeping the algebra somewhat
simpler (after all, we want explicit understandings of the transformed
equations). \end{remark}

\begin{remark}Related to the last remark, generic noise of the form
$H\left(t,x,u\right)dz$ can be removed with this technique. The issue
is that the transformed equations quickly falls beyond available viscosity
theory (e.g.~standard comparison results do no longer apply) cf.\ \cite{MR1799099,RBSDE}.
\end{remark}

\begin{proof} We first remove the terms driven by $z^{1}$: to this
end we apply transformation 1 with $L\left(t,x,r,p,X\right)$ replaced
by $L-r\nu\cdot\dot{z}^{2}-g\cdot\dot{z}^{3}$. The transformed solution,
$u^{1}\left(t,x\right)=u\left(t,\psi_{t}\left(x\right)\right)$, satisfies
the equation
\[
\left(\partial_{t}+L^{\psi}\right)u^{1}-u^{1}\underset{=d^{\psi}\left(t,x\right)}{\underbrace{\nu\left(t,\psi_{t}\left(x\right)\right)}}\cdot\dot{z}_{t}^{2}-\underset{=c^{\psi}\left(t,x\right)}{\underbrace{g\left(t,\psi_{t}\left(x\right)\right)}}\cdot\dot{z}_{t}^{3}=0
\]
We then remove the terms driven by $z^{2}$ by applying transformation
2 with $L^{\psi}-c^{\psi}\cdot\dot{z}^{3}$. The transformed solution
$u^{12}\left(t,x\right)=\phi^{-1}\left(t,u^{1}\left(t,x\right),x\right)$
satisfies the equation with operator 
\[
\left(\partial_{t}+\,^{\phi}\left(L^{\psi}-g^{\psi}\cdot\dot{z}^{3}\right)\right)
\]
i.e. 
\[
\partial_{t}u^{12}+\,^{\phi}(L^{\psi})u^{12}-\underset{=\,^{\phi}c^{\psi}}{\underbrace{\frac{1}{\phi^{\prime}}g^{\psi}}\cdot\dot{z}^{3}}=0.
\]
It now remains to apply transformation 3 to remove the remaining terms
driven by $z^{3}$. The transformed solution is precisely $\tilde{u}$,
as given in the statement of this proposition, and satisfies the equation
\[
\left(\partial_{t}+\,\left[^{\phi}\left(L^{\psi}\right)\right]_{\varphi}\right)\tilde{u}=0\text{.}
\]
The proof is now finished. \end{proof}

\subsection{Rough transformation\label{SectionRT}}

We need to understand transformations 1,2,3 when $\left(z^{1},z^{2},z^{3}\right)$
becomes a rough path, say $\mathbf{z}$. There is some ``tri-diagonal''
structure: (\ref{Transform1_psi}) can be solved as function of $z^{1}$
alone;
\begin{equation}
d\psi_{t}\left(x\right)=\sigma\left(t,\psi_{t}\left(x\right)\right)dz_{t}^{1}\text{ with }\psi_{0}\left(x\right)=x.\label{psi_rough}
\end{equation}
(\ref{Transform2_phi}) is tantamount to
\begin{equation}
\phi\left(t,r,x\right)=r\exp\left[\int_{0}^{t}\nu\left(s,\psi_{s}\left(x\right)\right)dz_{s}^{2}\right].\label{phi_rough}
\end{equation}
As for $\varphi=\varphi\left(t,x\right)$, note that
\[
1/\phi^{\prime}\left(t,r,x\right)=\tilde{\phi}\left(t,x\right)\equiv\exp\left[-\int_{0}^{t}\nu\left(s,\psi_{s}\left(x\right)\right)dz_{s}^{2}\right]
\]
so that
\begin{equation}
\varphi\left(t,x\right)=\int_{0}^{t}\tilde{\phi}\left(s,x\right)g\left(s,\psi_{s}\left(x\right)\right)dz_{s}^{3}.\label{a_rough}
\end{equation}

\begin{lemma} \label{LemmaRougFlows}Let $\mathbf{z}$ be a geometric
$p$-rough path; that is, an element in $C^{\text{0,}p\text{-var}}\left(\left[0,T\right],G^{\left[p\right]}\left(\mathbb{R}^{d}\right)\right)$.
Let $\gamma>p\geq1$. Assume
\begin{eqnarray*}
\sigma & = & \left(\sigma_{1},\dots,\sigma_{d_{1}}\right)\subset Lip^{\gamma}\left(\left[0,T\right]\times\mathbb{\mathbb{R}}^{n},\mathbb{\mathbb{R}}^{n}\right),\\
\nu & = & \left(\nu_{1},\dots,\nu_{d_{2}}\right)\subset Lip^{\gamma-1}\left(\left[0,T\right]\times\mathbb{\mathbb{R}}^{n},\mathbb{\mathbb{R}}\right),\\
g & = & \left(g_{1},\dots,g_{d_{3}}\right)\subset Lip^{\gamma-1}\left(\left[0,T\right]\times\mathbb{\mathbb{R}}^{n},\mathbb{\mathbb{R}}\right).
\end{eqnarray*}
Then $\psi,\phi$ and $\varphi$ depend (in local uniform sense) continuously
on $\left(z^{1},z^{2},z^{3}\right)$ in rough path sense. Under the
stronger regularity assumption $\gamma>p+2$; this also holds for
the first and second derivatives (with respect to $x$) of $\psi,\psi^{-1},\phi,\tilde{\phi}$
and $\varphi$. In particular, we can define $\psi,\phi$ and $\varphi$
when $\left(z^{1},z^{2},z^{3}\right)$ is replaced by a genuine geometric
$p$-rough path $\mathbf{z}$ and write $\psi^{\mathbf{z}},\phi^{\mathbf{z}},\varphi^{\mathbf{z}}$
to indicate this dependence. \end{lemma}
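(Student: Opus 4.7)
The plan is to encode $\psi$, $\phi$ and $\varphi$ as the three coordinates of the solution to a single joint rough differential equation driven by the canonical joint $p$-rough path $\mathbf{z} = (\mathbf{z}^{1}, \mathbf{z}^{2}, \mathbf{z}^{3})$, and then to apply Lyons' universal limit theorem in the form of Friz--Victoir (Ch.~10--11).

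Setting $\eta := \log(\phi/r)$ so that $\phi = r\, e^{\eta}$ and $\tilde{\phi} = 1/\phi' = e^{-\eta}$, the formulas \eqref{psi_rough}--\eqref{a_rough} become, after adjoining the coordinate $t\mapsto t$ to the driver, the autonomous RDE system on $\mathbb{R}^{n}\times\mathbb{R}\times\mathbb{R}$
\[
dy = \sigma(t,y)\,dz^{1},\qquad d\eta = \nu(t,y)\,dz^{2},\qquad dW = e^{-\eta}\,g(t,y)\,dz^{3},
\]
with initial data $(y_{0},\eta_{0},W_{0}) = (x,0,0)$, so that $\psi_{t}(x)=y_{t}$, $\phi(t,r,x)=r\,e^{\eta_{t}}$, $\varphi(t,x)=W_{t}$. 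The joint vector fields depend on the state only through $\sigma \in Lip^{\gamma}$, $\nu \in Lip^{\gamma-1}$, $g \in Lip^{\gamma-1}$ and the smooth exponential $\eta\mapsto e^{-\eta}$. Crucially, the $\eta$- and $W$-equations carry no rough self-coupling, so from the joint-system viewpoint they are effectively rough integrals against $z^{2}, z^{3}$ of paths controlled by $y$.

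With this setup the first assertion follows by a direct application of the universal limit theorem to the joint RDE: existence, uniqueness and continuity in the local uniform sense of the solution map $\mathbf{z}\mapsto(y,\eta,W)$ in $p$-variation rough path topology are immediate, and the corresponding statements for $\psi, \phi, \varphi$ are read off. Continuity of $\psi^{-1}$ comes from the standard inverse-flow theory for rough differential equations (Friz--Victoir, Ch.~11).

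For first and second derivatives in $x$ I would differentiate the joint RDE formally in $x$ once and then twice: $D_{x}(y,\eta,W)$ satisfies a linearised RDE in which one derivative of $\sigma, \nu, g$ appears, and $D^{2}_{x}(y,\eta,W)$ satisfies a further linearised RDE in which two derivatives appear. Each differentiation consumes one unit of $Lip$-regularity, so the strengthened hypothesis $\gamma > p+2$ is exactly what is needed to keep the twice-differentiated system within the $Lip^{>p}$ regime required by the universal limit theorem, yielding a continuous (in $\mathbf{z}$) Jacobian and Hessian flow. Continuity of $D\psi^{-1}, D^{2}\psi^{-1}$ then follows by inverting the Jacobian flow, and continuity of $D\tilde{\phi}, D^{2}\tilde{\phi}$ from the identity $\tilde{\phi}=e^{-\eta}$ via the chain rule. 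The main technical point---and the only place demanding more than routine book-keeping---is the asymmetry of $Lip$-regularities across the three driver components and the verification that the successive linearisations remain jointly well-posed and continuous in $\mathbf{z}$; the absence of rough self-coupling in the $\eta$- and $W$-equations is what makes this go through under the stated hypotheses.
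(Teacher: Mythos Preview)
Your approach is essentially the same as the paper's: both encode $\psi,\phi,\varphi$ (with time adjoined to the driver) as components of a single joint RDE with triangular structure and then invoke the universal limit theorem together with the flow-differentiability results of Friz--Victoir, Ch.~10--11. Your substitution $\eta=\log(\phi/r)$ is a cosmetic variant of the paper's choice to carry $(\phi,\tilde\phi)$ as separate linear components, and your explicit observation that the $\eta$- and $W$-equations are rough integrals (no self-coupling) is exactly what justifies the weaker $Lip^{\gamma-1}$ hypothesis on $\nu,g$ --- a point the paper's proof leaves implicit in its footnote on the triangular structure.
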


\begin{proof} Given $\mathbf{z}$ one can build a ``time-space''
rough path $\left(\mathbf{t,z}\right)$ of $\left(t,z^{1},z^{2},z^{3}\right)$
since the additionally needed iterated integrals against $t$ are
just Young integrals, cf.~\cite[Chapter 12]{friz-victoir-book}.
Define 
\[
W_{1}=\left(\begin{array}{l}
1\\
0\\
0\\
0\\
0
\end{array}\right),W_{2}=\left(\begin{array}{l}
0\\
\sigma\left(t,\psi\right)\\
0\\
0\\
0
\end{array}\right),W_{4}=\left(\begin{array}{l}
0\\
0\\
r.\phi.\nu\left(t,\psi\right)\\
-\tilde{\phi}.\nu\left(t,\psi\right)\\
0
\end{array}\right),W_{4}=\left(\begin{array}{l}
0\\
0\\
0\\
0\\
\tilde{\phi}.g\left(t,\psi\right)
\end{array}\right).
\]
The assumptions on $\sigma,\nu$ and $g$ guarantee that 
\[
W=\left(W_{1},\ldots,W_{4}\right):\mathbb{R}^{1+d_{1}+2d_{2}+d_{3}}\rightarrow L\left(\mathbb{R}^{1+d},\mathbb{R}^{1+d_{1}+2d_{2}+d_{3}}\right)
\]
is $Lip^{\gamma}$ (we work with $\mathbb{R}$ for the time coordinate
instead of the closed subset $\left[0,T\right]\subset\mathbb{R}$
since by the classic Whitney--Stein extension theorem (see e.g.~\cite{stein1970singular})
we can always find $Lip^{\gamma}$ resp.~$Lip^{\gamma-1}$ extensions
of $\sigma,\nu$ and $g$). Hence, the ``full RDE'' (parametrized
by $x\in\mathbb{R}^{n}$ and $r\in\mathbb{R}$)
\[
d\left(\begin{array}{l}
t\\
\psi\\
\phi\\
\tilde{\phi}\\
\varphi
\end{array}\right)=W\left(\begin{array}{l}
t\\
\psi\\
\phi\\
\tilde{\phi}\\
\varphi
\end{array}\right)d\left(\mathbf{t,z}\right)=\left(\begin{array}{llll}
1 & 0 & 0 & 0\\
0 & \sigma\left(t,\psi\right) & 0 & 0\\
0 & 0 & \phi.\nu\left(t,\psi\right) & 0\\
0 & 0 & -\tilde{\phi}.\nu\left(t,\psi\right) & 0\\
0 & 0 & 0 & \tilde{\phi}g\left(t,\psi\right)
\end{array}\right)d\left(\mathbf{t,z}\right)
\]
has a unique global solution%
\footnote{although $W$ fails to be bounded, the particular structure of the
system where one can first solve for $\psi$ and then construct the
other quantities by rough integration makes it clear that no explosion
can happen. The same situtation is discussed in detail in \cite[Chapter 11]{friz-victoir-book}.%
} (with obvious initial condition that the flows $\psi,\phi,\varphi$
evaluated at $t=0$ are the identity maps). Further, every additional
degree of Lipschitz regularity allows for one further degree of differentiability
of the solution flow with corresponding stability in rough path sense,
see \cite{lyons-98,lyons-qian-02,friz-victoir-book}. \end{proof}

\section{Semirelaxed limits and rough PDEs}

\label{sec:rpde}

The goal is to understand
\[
\partial_{t}u+L\left(t,x,u,Du,D^{2}u\right)=\sum_{i=1}^{d_{1}}\left(\sigma_{i}\left(t,x\right)\cdot Du\right)\dot{z}_{t}^{1;i}+u\sum_{j=1}^{d_{2}}\nu_{j}\left(t,x\right)\dot{z}_{t}^{2;j}+\sum_{k=1}^{d_{3}}g_{k}\left(t,x\right)\dot{z}_{t}^{3;k}
\]
in the case when $\left(z^{1},z^{2},z^{3}\right)$ becomes a rough
path. To this end we first give the assumptions on $L$.

\begin{assumption} \label{main_assumptions}Assume $L:\left[0,T\right]\times\mathbb{R}^{n}\times\mathbb{R}\times\mathbb{R}^{n}\times\mathbb{S}^{n}\rightarrow\mathbb{R}$
is of the form

\begin{equation}
L\left(t,x,r,p,X\right)=-\mathrm{Tr}\left[A(t,x)X\right]+b\left(t,x\right)\cdot p+c\left(t,x,r\right)\label{form_of_L_mainthm}
\end{equation}
with
\begin{enumerate}
\item $A=a\cdot a^{T}$ for some $a:\left[0,T\right]\times\mathbb{R}^{n}\rightarrow\mathbb{R}^{n\times n^{\prime}}$
\item $a:\left[0,T\right]\times\mathbb{R}^{n}\rightarrow\mathbb{R}^{n\times n^{\prime}}$
and $b:\left[0,T\right]\times\mathbb{R}^{n}\rightarrow\mathbb{R}^{n}$
are bounded, continuous in $t$ and Lipschitz continuous in $x$,
uniformly in $t\in\left[0,T\right]$
\item $c:\left[0,T\right]\times\mathbb{R}^{n}\times\mathbb{R}\rightarrow\mathbb{R}$
is continuous, bounded whenever $r$\ remains bounded, and with a
lower Lipschitz bound, i.e.\ $\exists C<0$ s.t.
\begin{equation}
c\left(t,x,r\right)-c\left(t,x,s\right)\geq C\left(r-s\right)\text{ for all }r\geq s,\text{ }\left(t,x\right)\in\left[0,T\right]\times\mathbb{R}^{n}\text{.}\label{EqWeakProper}
\end{equation}

\end{enumerate}
\end{assumption}

Assumption \ref{main_assumptions} guarantees that a comparison result
holds for $\partial_{t}+L$; see the appendix and \cite[Section V, Lemma 7.1]{MR2179357FS}
or \cite{MR2765508} for details. Further we need the assumptions
on the coefficients in $\Lambda$.

\begin{assumption}\label{as:lambda}Assume that%
\footnote{The regularity assumptions on the vector fields with respect to $t$
could be relaxed here, cf.~remark \ref{rem.timereg}. %
}

\begin{eqnarray*}
\sigma & = & \left(\sigma_{1},\dots,\sigma_{d_{1}}\right)\subset Lip^{\gamma}\left(\left[0,T\right]\times\mathbb{\mathbb{R}}^{n},\mathbb{\mathbb{R}}^{n}\right),\\
\nu & = & \left(\nu_{1},\dots,\nu_{d_{2}}\right)\subset Lip^{\gamma-1}\left(\left[0,T\right]\times\mathbb{\mathbb{R}}^{n},\mathbb{\mathbb{R}}\right),\\
g & = & \left(g_{1},\dots,g_{d_{3}}\right)\subset Lip^{\gamma-1}\left(\left[0,T\right]\times\mathbb{\mathbb{R}}^{n},\mathbb{\mathbb{R}}\right).
\end{eqnarray*}

\end{assumption}

Let us now replace the (smooth)\ driving signals of the earlier sections
by a $d=\left(d_{1}+d_{2}+d_{3}\right)$-dimensional driving signal
$z^{\varepsilon}$ and \textit{impose} convergence to a genuine geometric
$p$-rough path $\mathbf{z}$, that is, in the notation of \cite[Chapter 14]{friz-victoir-book}
\[
\mathbf{z}\in C^{0,p\text{-var}}\left(\left[0,T\right],G^{\left[p\right]}\left(\mathbb{R}^{d_{1}+d_{2}+d_{3}}\right)\right).
\]
We can now prove our main result.

\begin{theorem} \label{ThmMain}Let $p\geq1$. Assume $L$ fulfills
assumption \ref{main_assumptions} and the coefficients of $\Lambda=\left(\Lambda_{1},\dots,\Lambda_{d_{1}+d_{2}+d_{3}}\right)$
fulfill \ref{as:lambda} for some $\gamma>p+2.$ Let $u_{0}\in BUC\left(\mathbb{R}^{n}\right)$
and $\mathbf{z}\in C^{0,p\text{-var}}\left(\left[0,T\right],G^{\left[p\right]}\left(\mathbb{R}^{d}\right)\right)$.
Then there exists a unique $u=u^{\mathbf{z}}\in BUC\left(\left[0,T\right]\times\mathbb{R}^{n}\right)$
such that for any sequence $\left(z^{\epsilon}\right)_{\epsilon}\subset C^{1}\left(\left[0,T\right],\mathbb{R}^{d}\right)$
such that $z^{\varepsilon}\rightarrow\mathbf{z}$ in $p$-rough path
sense, the viscosity solutions $\left(u^{\varepsilon}\right)\subset BUC\left(\left[0,T\right]\times\mathbb{R}^{n}\right)$
of
\[
\dot{u}^{\varepsilon}+L\left(t,x,u^{\varepsilon},Du^{\varepsilon},D^{2}u^{\varepsilon}\right)=\sum_{k=1}^{d}\Lambda_{k}\left(t,x,u^{\varepsilon},Du^{\varepsilon}\right)\dot{z}^{k;\varepsilon}\mathbf{,\,\,\,}u^{\epsilon}\left(0,\cdot\right)=u_{0}\left(.\right),
\]
converge locally uniformly to $u^{\mathbf{z}}$. We write formally%
\footnote{The intrinsic meaning of this ``rough''\ PDE is discussed in definition
\ref{roughPDE} below.%
},
\begin{equation}
du+L\left(t,x,u,Du,D^{2}u\right)dt=\Lambda\left(t,x,u,Du\right)d\mathbf{z,\,\,\,}u\left(0,\cdot\right)=u_{0}\left(.\right)\label{EqRPDE}
\end{equation}
Moreover, we have the contraction property
\[
\left\vert u^{\mathbf{z}}-\hat{u}^{\mathbf{z}}\right\vert _{\infty;\mathbb{R}^{n}\times\left[0,T\right]}\leq e^{CT}\left\vert u_{0}-\hat{u}_{0}\right\vert _{\infty;\mathbb{R}^{n}}
\]
($C$ given by \eqref{EqWeakProper}) and continuity of the solution
map $\left(\mathbf{z,}u_{0}\right)\mapsto u^{\mathbf{z}}$ from 
\[
C^{0,p\text{-var}}\left(\left[0,T\right],G^{\left[p\right]}\left(\mathbb{R}^{d}\right)\right)\times\mathrm{BUC}\left(\mathbb{R}^{n}\right)\rightarrow\mathrm{BUC}\left(\left[0,T\right]\times\mathbb{R}^{n}\right).
\]
\end{theorem}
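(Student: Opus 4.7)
The plan is to reduce \eqref{EqRPDE} to a classical degenerate parabolic PDE whose coefficients depend on $\mathbf{z}$ only through deterministic rough-flow objects, and then invoke stability of viscosity solutions. For each $C^{1}$-approximation $z^{\varepsilon}$ of $\mathbf{z}$, Proposition \ref{Prop_Transform_to_PDE} gives a one-to-one correspondence between the viscosity solution $u^{\varepsilon}$ of the RPDE driven by $z^{\varepsilon}$ and the viscosity solution $\tilde{u}^{\varepsilon}$ of the \emph{classical} PDE
\[
\partial_{t}\tilde{u}^{\varepsilon}+\tilde{L}^{\varepsilon}\bigl(t,x,\tilde{u}^{\varepsilon},D\tilde{u}^{\varepsilon},D^{2}\tilde{u}^{\varepsilon}\bigr)=0,\qquad\tilde{u}^{\varepsilon}(0,\cdot)=u_{0}(\cdot),
\]
where $\tilde{L}^{\varepsilon}=\bigl[{}^{\phi^{\varepsilon}}\!(L^{\psi^{\varepsilon}})\bigr]_{\varphi^{\varepsilon}}$ is built algebraically from the flows $\psi^{\varepsilon},\phi^{\varepsilon},\varphi^{\varepsilon}$ defined by \eqref{Transform1_psi}--\eqref{Transform3_a} together with their first and second spatial derivatives.

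\textbf{Rough limit of the coefficients.} Under $\gamma>p+2$, Lemma \ref{LemmaRougFlows} grants locally uniform continuity on $[0,T]\times\mathbb{R}^{n}$, in rough-path sense, of $\psi^{\varepsilon},\phi^{\varepsilon},\varphi^{\varepsilon},\psi^{\varepsilon,-1}$ together with their $D$ and $D^{2}$ spatial derivatives. Hence if $z^{\varepsilon}\to\mathbf{z}$ in $p$-variation rough-path sense, each object converges locally uniformly to its rough-path counterpart $\psi^{\mathbf{z}},\phi^{\mathbf{z}},\varphi^{\mathbf{z}},\ldots$, and substituting into the explicit formulas of Transformations 1, 2 and 3 produces a limit operator $\tilde{L}^{\mathbf{z}}$ with $\tilde{L}^{\varepsilon}\to\tilde{L}^{\mathbf{z}}$ locally uniformly on $[0,T]\times\mathbb{R}^{n}\times\mathbb{R}\times\mathbb{R}^{n}\times\mathbb{S}^{n}$. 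A term-by-term inspection shows that $\tilde{L}^{\mathbf{z}}$ is still of the semilinear form \eqref{form_of_L_mainthm}: the diffusion factors as $\tilde{A}^{\mathbf{z}}=\tilde{a}^{\mathbf{z}}(\tilde{a}^{\mathbf{z}})^{T}$ with $\tilde{a}^{\mathbf{z}}(t,x)=D\psi_{t}^{\mathbf{z},-1}|_{\psi_{t}^{\mathbf{z}}(x)}\cdot a(t,\psi_{t}^{\mathbf{z}}(x))$, while $\tilde{b}^{\mathbf{z}}$ and $\tilde{c}^{\mathbf{z}}$ remain bounded and Lipschitz-in-$x$ uniformly in $t\in[0,T]$. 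Crucially, since $\phi$ is linear in $r$, the $r$-lower Lipschitz bound \eqref{EqWeakProper} is preserved with the \emph{same} constant $C$ through the operations ${}^{\phi}c$ and $c_{\varphi}$; hence Assumption \ref{main_assumptions} holds for $\tilde{L}^{\mathbf{z}}$ and, uniformly in $\varepsilon$, for $\tilde{L}^{\varepsilon}$.

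\textbf{Stability and passage to the limit.} Standard viscosity theory (Perron's method for existence, the comparison principle recalled in the appendix for uniqueness) produces a unique $\tilde{u}^{\mathbf{z}}\in\mathrm{BUC}([0,T]\times\mathbb{R}^{n})$ solving $\partial_{t}\tilde{u}+\tilde{L}^{\mathbf{z}}\tilde{u}=0$ with initial datum $u_{0}$. A comparison-based a priori bound renders the family $(\tilde{u}^{\varepsilon})$ locally uniformly bounded; the Barles--Perthame upper/lower semi-relaxed limits $\overline{u},\underline{u}$ are therefore well defined and, by the locally uniform convergence $\tilde{L}^{\varepsilon}\to\tilde{L}^{\mathbf{z}}$, are respectively a viscosity sub- and supersolution of $\partial_{t}+\tilde{L}^{\mathbf{z}}=0$ with data $u_{0}$. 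Comparison for $\tilde{L}^{\mathbf{z}}$ forces $\overline{u}\le\underline{u}\le\overline{u}$, so both equal $\tilde{u}^{\mathbf{z}}$, and $\tilde{u}^{\varepsilon}\to\tilde{u}^{\mathbf{z}}$ locally uniformly. Defining $u^{\mathbf{z}}$ as the (continuous) inverse of the three transformations applied to $\tilde{u}^{\mathbf{z}}$ using the limit flows gives $u^{\varepsilon}\to u^{\mathbf{z}}$ locally uniformly; independence of the approximating sequence is automatic since $\tilde{u}^{\mathbf{z}}$ depends only on $\mathbf{z}$.

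\textbf{Contraction, continuity, and main obstacle.} For fixed $\mathbf{z}$ and two initial data the transformations coincide, so the contraction $|u^{\mathbf{z}}-\hat{u}^{\mathbf{z}}|_{\infty}\le e^{CT}|u_{0}-\hat{u}_{0}|_{\infty}$ reduces to the comparison estimate for $\tilde{L}^{\mathbf{z}}$ (using the preserved constant $C$). Continuity of $(\mathbf{z},u_{0})\mapsto u^{\mathbf{z}}$ then follows by applying the same semi-relaxed limit argument jointly to a sequence $(\mathbf{z}_{n},u_{0,n})\to(\mathbf{z},u_{0})$, exploiting continuous dependence of $\tilde{L}^{\mathbf{z}}$ on $\mathbf{z}$ from Lemma \ref{LemmaRougFlows}. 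I expect the \emph{main obstacle} to be the structural verification in the second step: one must check concretely that $\tilde{L}^{\mathbf{z}}$, after substitution of the rough-flow-dependent coefficients $D\psi^{\mathbf{z},-1},D^{2}\psi^{\mathbf{z},-1},D\phi^{\mathbf{z}},\ldots$, still satisfies all of Assumption \ref{main_assumptions} with constants controlled locally uniformly in $\mathbf{z}$. This is exactly where $\gamma>p+2$ (rather than merely $\gamma>p$) is essential, since it is precisely what Lemma \ref{LemmaRougFlows} requires to deliver the second-order rough-flow derivatives whose boundedness and continuity enter the Lipschitz-in-$x$ bounds for $\tilde{a}^{\mathbf{z}}$ and $\tilde{b}^{\mathbf{z}}$.
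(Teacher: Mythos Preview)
Your proposal is correct and follows essentially the same approach as the paper: transform via Proposition \ref{Prop_Transform_to_PDE}, pass the coefficients to the rough limit using Lemma \ref{LemmaRougFlows}, obtain a uniform a priori bound on $\tilde u^{\varepsilon}$ from comparison, and conclude via the Barles--Perthame semi-relaxed limits (the paper's ``rough semi-relaxed limits'', with a citation to \cite{MR2765508} in lieu of the details you spell out). Your identification of the main obstacle---verifying that $\tilde L^{\mathbf z}$ still satisfies Assumption \ref{main_assumptions} with constants controlled in $\mathbf z$, and in particular that the lower Lipschitz constant $C$ in \eqref{EqWeakProper} is preserved through all three transformations---is exactly the point the paper glosses over, and your check that $C$ survives unchanged (since $\phi$ is linear in $r$ and $\varphi$ acts by an additive shift) is correct.
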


\begin{proof} We shall write $\psi^{\mathbf{z}},\phi^{\mathbf{z}},\varphi^{\mathbf{z}}$
for the objects (solutions of rough differential equations and integrals)
built upon $\mathbf{z,}$ as discussed in the last section (lemma
\ref{LemmaRougFlows}) and also write $\psi^{\mathbf{\varepsilon}},\phi^{\mathbf{\varepsilon}},\varphi^{\mathbf{\varepsilon}}$
when the driving signal is $z^{\varepsilon}$. Recall from (\ref{DefMk})
that $\Lambda=\left(\Lambda_{1},\dots,\Lambda_{d}\right)$ is a collection
of linear, first order differential operators. We use the same technique
of ``rough semi-relaxed limits'' as in \cite{MR2765508}: the key
remark being that
\[
\left[\,^{\phi^{\varepsilon}}\left(L^{\psi^{\varepsilon}}\right)\right]_{\varphi^{\varepsilon}}\rightarrow\left[\,^{\phi^{\mathbf{z}}}\left(L^{\psi^{\mathbf{z}}}\right)\right]_{\varphi^{\mathbf{z}}}
\]
holds locally uniformly, as function of $\left(t,x,r,p,X\right)$.
Secondly, applying the transformations, the (classical) viscosity
solutions $u^{\varepsilon}$ can be used to define a new function
$\tilde{u}^{\varepsilon}$ by setting 
\[
\tilde{u}^{\varepsilon}\left(t,x\right)=\left(\phi^{\varepsilon}\right)^{-1}\left(t,u^{\varepsilon}\left(t,\psi^{\varepsilon}\left(t,x\right)\right),x\right)+\varphi^{\varepsilon}\left(t,x\right);
\]
and proposition \ref{Prop_Transform_to_PDE} in section \ref{sec:trans}
show that $\tilde{u}^{\varepsilon}$ is a (classical) viscosity solution
of 
\[
d\tilde{u}^{\varepsilon}+\left[\,^{\phi^{\varepsilon}}\left(L^{\psi^{\varepsilon}}\right)\right]_{\varphi^{\varepsilon}}\left(t,x,\tilde{u}^{\varepsilon},D\tilde{u}^{\varepsilon},D^{2}\tilde{u}^{\varepsilon}\right)=0.
\]
Now one uses the comparison result for parabolic viscosity solutions
(as given in the appendix) to conclude that there exists a constant
$C>0$ such that
\[
\sup_{\substack{\varepsilon\in(0,1]\\
t\in\left[0,T\right]\\
x\in\mathbb{R}^{n}
}
}\left\vert \tilde{u}^{\varepsilon}\left(t,x\right)\right\vert <\left(1+\left\vert u_{0}\right\vert _{\infty}\right)e^{CT};
\]
This in turn implies (thanks to the uniform control on $\varphi^{\varepsilon},\phi^{\varepsilon},\psi^{\varepsilon}$
as $\varepsilon\rightarrow0$) by using the rough path representations
discussed in section \ref{SectionRT} that $\tilde{u}^{\varepsilon}$
remains locally uniform bounded (as $\varepsilon\rightarrow0$). Together
with the stability of (classical) viscosity solutions (c.f.~\cite{MR2765508})
the proof is finished. \end{proof}

The reader may wonder if $u$ is the solution in a sense beyond the
``formal'' equation
\[
du+L\left(t,x,u,Du,D^{2}u\right)dt=\Lambda\left(t,x,u,Du\right)d\mathbf{z,\,\,\,}u\left(0,\cdot\right)\equiv u_{0}\left(\cdot\right).
\]
Inspired by the definition given by Lions--Souganidis in \cite{MR1799099}
we give

\begin{definition} \label{roughPDE}$u$ is a solution to the \textbf{rough
partial differential equation} \eqref{EqRPDE} if and only if $\tilde{u}\left(t,x\right)=\left(\phi^{\boldsymbol{z}}\right)^{-1}\left(t,u\left(t,\psi^{\boldsymbol{z}}\left(t,x\right)\right),x\right)+\varphi^{\boldsymbol{z}}\left(t,x\right)$
\[
d\tilde{u}+\tilde{L}\left(t,x,\tilde{u},D\tilde{u},D^{2}\tilde{u}\right)=0,\,\,\,\tilde{u}\left(0,\cdot\right)=u_{0}\left(\cdot\right)
\]
in viscosity sense where
\[
\tilde{L}=\,\left[^{\phi^{\mathbf{z}}}\left(L^{\psi^{\mathbf{z}}}\right)\right]_{\varphi^{\mathbf{z}}}.
\]
\end{definition}

\begin{corollary} Under the assumptions of Theorem \ref{ThmMain}
there exists a unique solution in $BUC\left(\left[0,T\right]\times\mathbb{R}^{n}\right)$
to the RPDE \eqref{EqRPDE}. \end{corollary}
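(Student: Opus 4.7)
The plan is to reduce the RPDE problem to the transformed PDE $\partial_t \tilde{u} + \tilde{L}(t,x,\tilde{u},D\tilde{u},D^2\tilde{u}) = 0$ with initial datum $u_0$, where $\tilde{L} = [\,^{\phi^{\mathbf{z}}}(L^{\psi^{\mathbf{z}}})]_{\varphi^{\mathbf{z}}}$, and then invoke the classical theory of viscosity solutions for this (random but fixed) PDE. By Definition \ref{roughPDE} such a reduction is tautological: $u \in BUC$ solves the RPDE if and only if its transform $\tilde{u}$ solves the displayed PDE in viscosity sense. Note also that the transformation $u \mapsto \tilde{u}$ is a bijection on $BUC([0,T]\times\mathbb{R}^n)$ since $\psi^{\mathbf{z}}_t$ is a diffeomorphism of $\mathbb{R}^n$, $\phi^{\mathbf{z}}(t,\cdot,x)$ is an (affine) increasing diffeomorphism of $\mathbb{R}$, and $\varphi^{\mathbf{z}}$ is bounded, all of which follows from Lemma \ref{LemmaRougFlows}.

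For existence, I would simply observe that this is a byproduct of the proof of Theorem \ref{ThmMain}: the approximating functions $\tilde{u}^{\varepsilon}$ are classical viscosity solutions of the PDEs driven by the transformed operators $[\,^{\phi^{\varepsilon}}(L^{\psi^{\varepsilon}})]_{\varphi^{\varepsilon}}$, these operators converge locally uniformly to $\tilde{L}$, and the $\tilde{u}^{\varepsilon}$ converge locally uniformly and stay locally uniformly bounded. Barles--Perthame-type stability of viscosity solutions then gives that the limit $\tilde{u}$ is a viscosity solution of $\partial_t \tilde{u} + \tilde{L} = 0$, with initial condition $u_0$ (since $\psi^{\mathbf{z}}_0 = \mathrm{id}$, $\phi^{\mathbf{z}}(0,r,x) = r$ and $\varphi^{\mathbf{z}}(0,\cdot) = 0$). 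Unwinding the transformation yields $u^{\mathbf{z}} \in BUC$ solving the RPDE.

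For uniqueness, the task is to show that the transformed PDE has at most one $BUC$ viscosity solution with given initial datum. Since the transform is a bijection on $BUC$, any two solutions of the RPDE correspond to two $BUC$ viscosity solutions of the same PDE $\partial_t \tilde{u} + \tilde{L} = 0$ with the same initial datum $u_0$, so a comparison principle for this PDE finishes the argument. The key (and really the only substantive) step will therefore be to verify that $\tilde{L}$ inherits enough of the structure of $L$ (degenerate ellipticity, properness with the same lower Lipschitz bound $C$ up to a shift, and Lipschitz-in-$x$ control of its coefficients on each compact time interval) to fall within the scope of the comparison theorem used in the appendix and in \cite{MR2179357FS,MR2765508}. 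This is where the anticipated obstacle lies, but it is essentially bookkeeping: the explicit formulas for $L^{\psi}$, $^{\phi}L$, and $L_{\varphi}$ derived in Section \ref{sec:trans}, combined with the uniform (on compact $t$-intervals) $C^2_x$ regularity of $\psi^{\mathbf{z}},(\psi^{\mathbf{z}})^{-1},\phi^{\mathbf{z}},\tilde\phi^{\mathbf{z}},\varphi^{\mathbf{z}}$ provided by Lemma \ref{LemmaRougFlows}, show that $\tilde{A} = a^{\psi}\cdot(a^{\psi})^T$ is again of the ``square'' form, that the drift and zero-order coefficients are bounded and Lipschitz in $x$ uniformly in $t$, and that the zero-order term still satisfies \eqref{EqWeakProper} (with a modified constant absorbed into the exponential bound). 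Once this is in place, the standard parabolic comparison principle yields uniqueness in $BUC$, and the contraction estimate of Theorem \ref{ThmMain} even gives the quantitative $L^\infty$ bound $e^{CT}|u_0 - \hat u_0|_\infty$.
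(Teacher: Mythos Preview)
Your proposal is correct and follows essentially the same approach as the paper: existence is read off from Theorem \ref{ThmMain}, and uniqueness is inherited from the comparison principle for the transformed Cauchy problem $(\partial_t + \tilde{L})\tilde{u}=0$. The paper's own proof is a two-line sketch of exactly this, whereas you have usefully unpacked why the transform is a bijection on $BUC$ and why $\tilde{L}$ still falls under the scope of the parabolic comparison theorem in the appendix.
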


\begin{proof} Existence is clear from theorem \ref{ThmMain}. Uniqueness
is inherited from uniqueness to the Cauchy problem for $\left(\partial_{t}+\tilde{L}\right)=0$
which follows from a comparison theorem for parabolic viscosity solution
(c.f.~Theorem \ref{thm:comp} in the appendix). \end{proof}

\section{\label{sec:viscL2}RPDEs and Variational Solutions of SPDEs}

A classic approach to second order parabolic SPDEs (especially the
Zakai equation from nonlinear filtering) is the so-called $L^{2}$-theory
for SPDEs, due to Pardoux, Rozovskii, Krylov et.~al., cf.~\cite{MR557763,MR553909,MR2329435}.
Sufficient conditions for existence, uniqueness in this setting are
classical (brief recalls are given below). On the other hand, our
main theorem on RPDEs driven by rough paths, theorem \ref{ThmMain},
can be applied with almost every realization of Enhanced Brownian
motion, $\boldsymbol{B}(\omega)$, that is Brownian motion enhanced
with Lévy's stochastic area, the standard example of a (random) rough
path. The aim of this section is to show that, whenever the (not too
far from optimal) assumptions of both theories are met, the resulting
solutions coincide. We focus on the model case of linear SPDEs; i.e.
\begin{equation}
L\left(t,x,r,p,X\right)=-\mathrm{Tr}\left[A(t,x)X\right]+b\left(t,x\right)\cdot p+c\left(t,x\right)r+f\left(t,x\right).\label{eq:L linear}
\end{equation}

\subsection{$L^{2}$ solutions}

Given is a filtered probability space $\left(\Omega,\mathcal{F},\left(\mathcal{F}_{t}\right),\mathbb{P}\right)$,
which satisfies the usual conditions and carries a $d$-dimensional
Brownian motion $B$. Denote with $H^{m}\left(\mathbb{R}^{n}\right)$
the usual Sobolev space, i.e.~the subspace of $L^{p}\left(\mathbb{R}^{n}\right)$
consisting of functions whose generalized derivatives up to order
$m$ are in $L^{p}\left(\mathbb{R}^{n}\right)$. Equipped with the
norm 
\[
\left|f\right|_{H^{m}\left(\mathbb{R}^{n}\right)}=\left(\sum_{0\leq k\leq m}\left|\partial_{i_{1}}\cdots\partial_{i_{k}}f\right|_{L^{2}\left(\mathbb{R}^{n}\right)}^{p}\right)^{1/2}
\]
$H^{m}\left(\mathbb{R}^{n}\right)$ becomes a separable Hilbert space
and the variational approach makes use of the triple 
\[
\left(H^{m}\left(\mathbb{R}^{n}\right)\right)\hookrightarrow L_{r}^{2}\left(\mathbb{R}^{n}\right)\backsimeq\left(L_{r}^{2}\left(\mathbb{R}^{n}\right)\right)^{\star}\hookrightarrow\left(H^{m}\left(\mathbb{R}^{n}\right)\right)^{\star}.
\]

We make the following assumptions on the coefficients of $L$ and
$\Lambda$.

\begin{assumption} \label{assumptions} For $i,j\in\left\{ 1,\ldots,n\right\} $,
$k\in\left\{ 1,\ldots,d\right\} $
\begin{enumerate}
\item $a^{ij},b_{i},c,f$ as well as $\sigma_{k}^{i},\nu_{k}^{i},g$ are
elements of%
\footnote{$C_{b}$ denotes the bounded continuous functions and $C_{0}$ the
subset of compactly supported functions. %
} $C_{b}\left(\left[0,T\right]\times\mathbb{R}^{n},\mathbb{R}\right)$
and $\sigma_{k}^{i},\nu_{k}^{i},g$ have one and $a^{ij},\sigma_{k}^{i}$
have two, bounded (uniformly in $t$) continuous derivatives in space,
\item $f,g\in L^{2}\left(\left[0,T\right]\times\mathbb{R}^{n}\right)$,
\item $A=a\cdot a^{T}$, and $\exists\lambda>0$ such that $\forall t\in\left[0,T\right]$
\[
z^{T}\cdot A\left(t,x\right)\cdot z\geq\lambda\left|z\right|^{2}\,\,\,\forall x,z\in\mathbb{R}^{n}.
\]

\end{enumerate}
\end{assumption}

A $L^{2}$-solution is then defined as follows

\begin{definition} \label{def: L2 solution}Let $u_{0}\in L^{2}\left(\mathbb{R}^{n}\right)$
and assume $L$ and $\Lambda$ fulfill assumption \ref{assumptions}.
We say that a $L^{2}\left(\mathbb{R}^{n}\right)$-valued, strongly
continuous and $\left(\mathcal{F}_{t}\right)$-adapted process $u=\left(u_{t}\right)_{t\in\left[0,T\right]}$
is a $L^{2}$-solution of 
\begin{eqnarray}
du & = & Ludt+\Lambda u\circ dB_{t}\label{eq:spde}\\
u\left(0,.\right) & = & u_{0}\left(.\right)\nonumber 
\end{eqnarray}
if 
\begin{enumerate}
\item we have $\mathbb{P}$-a.s.~that $u_{t}\in H^{1}\left(\mathbb{R}^{n}\right)$
for a.e.~$t\in\left[0,T\right]$ and $\mathbb{P}\left(\int_{0}^{T}\left|u_{r}\right|_{H^{1}\left(\mathbb{R}^{n}\right)}^{2}dr<\infty\right)=1$,
\item $\forall\varphi\in C_{0}^{\infty}\left(\mathbb{R}^{n}\right)$ we
have%
\footnote{$\widetilde{L}^{\star}$ and $\Lambda^{\star}$ denote the formal
adjoint operators of $\widetilde{L}$ and $\Lambda$, the stochastic
integral is understood in the Ito sense and $\left\langle .,.\right\rangle $
denotes the scalar product on $L^{2}\left(\mathbb{R}^{n}\right)$%
} 
\begin{equation}
\left\langle u_{.},\varphi\right\rangle -\left\langle u_{0},\varphi\right\rangle =\int_{0}^{.}\left\langle u_{r},\widetilde{L}^{\star}\varphi\right\rangle dr+\sum_{k=1}^{d}\int_{0}^{.}\left\langle u_{r},\Lambda_{k}^{\star}\varphi\right\rangle dB_{r}^{k}\,\,\,\left(d\lambda\otimes d\mathbb{P}\right)-a.s.,\label{eq:weak_non_div}
\end{equation}
 here $\widetilde{L}\varphi:=L\varphi+\frac{1}{2}\sum_{k=1}^{d}\Lambda_{k}\Lambda_{k}\varphi$.
\end{enumerate}
\end{definition} 

\begin{remark}The difference with the standard definition, cf.~\cite[Chapter IV, p130]{MR1135324},
is that we additionally assume enough regularity on the coefficients
for the existence of the adjoint of $\widetilde{L}$ and to switch
between divergence and non-divergence form%
\footnote{ In the classic variational approach this can be avoided by working
throughout with $L$ in divergence form (resulting in no smoothness
requirement on the coefficients in space instead of the existence
of one derivative; in fact, except for the free terms, only boundedness
and measurability of coefficients in combination with superparabolicity
is sufficient, cf.~\cite[Chapter IV]{MR1135324}).%
}.\end{remark}

\begin{theorem}\label{thm:L2 weak}Under assumption \ref{assumptions}
there exists a unique $L^{2}$-solution of (\ref{eq:spde}).\end{theorem}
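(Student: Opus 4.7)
The strategy is to convert the Stratonovich equation (\ref{eq:spde}) into It\^o form and invoke the classical variational existence/uniqueness theorem for linear parabolic SPDEs on the Gelfand triple $V \hookrightarrow H \hookrightarrow V^{*}$ with $V = H^{1}(\mathbb{R}^{n})$, $H = L^{2}(\mathbb{R}^{n})$. Using the regularity of $\sigma_k, \nu_k, g_k$ from Assumption~\ref{assumptions}, I would rewrite $\Lambda_k u \circ dB^k = \Lambda_k u \, dB^k + \tfrac{1}{2}\Lambda_k(\Lambda_k u)\,dt$, so that (\ref{eq:spde}) becomes the It\^o equation
\[
du = \widetilde{L} u \, dt + \Lambda u \, dB_t,\qquad \widetilde{L} = L + \tfrac{1}{2}\sum_{k=1}^d \Lambda_k \Lambda_k,
\]
and the weak identity (\ref{eq:weak_non_div}) is precisely its dual against test functions $\varphi \in C_{0}^{\infty}(\mathbb{R}^{n}) \subset H^{1}$.

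Next, I would verify the stochastic parabolicity / coercivity estimate required by the variational framework (cf.~\cite{MR1135324,MR557763}): for some $\alpha>0$, $K,C\in\mathbb{R}$ and all $v\in H^{1}$,
\[
2\langle \widetilde{L} v, v\rangle_{V^{*},V} + \sum_{k=1}^{d} |\Lambda_k v|_{L^{2}}^{2} \leq -\alpha |v|_{H^{1}}^{2} + K|v|_{L^{2}}^{2} + C.
\]
The essential cancellation happens at the principal symbol: integration by parts on $-\mathrm{Tr}[A D^{2} v]$ produces $\int A^{ij}\partial_{i}v\,\partial_{j}v$; the Stratonovich correction $\tfrac{1}{2}\sum_{k}(\sigma_{k}\cdot\nabla)^{2}$ contributes $-\tfrac{1}{2}\sum_{k}\int(\sigma_{k}\cdot\nabla v)^{2}$; and the quadratic variation term $\sum_{k} |\Lambda_k v|_{L^{2}}^{2}$ supplies the matching positive $\sum_{k}\int(\sigma_{k}\cdot\nabla v)^{2}$. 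The net principal contribution is $2\int A^{ij}\partial_{i}v\,\partial_{j}v \geq 2\lambda|\nabla v|_{L^{2}}^{2}$ by uniform ellipticity, and the lower-order and inhomogeneous pieces (including $f,g\in L^2$) are absorbed into $K|v|_{L^{2}}^{2}+C$ by Cauchy--Schwarz and the bounds on the coefficients.

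Given the coercivity estimate together with the (trivial, in the linear case) monotonicity and linear-growth bounds, the Pardoux/Krylov--Rozovskii theorem supplies a unique $(\mathcal{F}_t)$-adapted process $u\in L^{2}(\Omega;C([0,T];L^{2})) \cap L^{2}(\Omega\times[0,T];H^{1})$ solving the equation in $V^{*}$, and the $L^{2}$-integrability of $|u|_{H^{1}}$ along trajectories yields condition (1) of Definition~\ref{def: L2 solution} a.s.; pairing the $V^{*}$-identity with $\varphi\in C_{0}^{\infty}$ gives exactly (\ref{eq:weak_non_div}). The $C^{2}_{b}$-regularity of $a^{ij},\sigma_{k}^{i}$ and $C^{1}_{b}$-regularity of $b_i,c,\nu_k$ ensure the formal adjoints $\widetilde{L}^{*},\Lambda_k^{*}$ are well-defined and all integrations by parts are justified. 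I expect the main technical obstacle to lie in the careful bookkeeping of the coercivity estimate, in particular in exhibiting the Stratonovich--It\^o cancellation so that the remaining energy is controlled purely by the uniform ellipticity of $A$, without any additional hypothesis on $\sigma_k$ (as would be needed for a direct It\^o-form coercivity without the Stratonovich correction); once this step is in place, existence and uniqueness reduce to a direct citation of the variational theory.
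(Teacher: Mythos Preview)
Your proposal is correct and follows essentially the same route as the paper: both reduce the Stratonovich SPDE to It\^o form and invoke the classical variational existence/uniqueness result of Pardoux/Krylov--Rozovskii on the triple $H^{1}\hookrightarrow L^{2}\hookrightarrow H^{-1}$. The paper's proof is in fact even more terse---it simply cites \cite[Chapter~4, Theorem~1]{MR1135324} for existence and uniqueness, writes out explicitly the It\^o/divergence-form coefficients $\tilde a,\tilde b,\tilde c,\tilde f,\overline b$, and then integrates by parts to pass from divergence form to the adjoint formulation (\ref{eq:weak_non_div})---whereas you additionally sketch the coercivity verification (the Stratonovich--It\^o cancellation at the principal symbol leaving the bare ellipticity of $A$), which the paper leaves implicit in the citation.
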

\begin{proof}The standard variational approach as for example presented
in \cite[Chapter 4, Theorem 1]{MR1135324} (see also~\cite{MR557763,MR553909,MR2329435})
guarantees the existence of an $L^{2}\left(\mathbb{R}^{n}\right)$-valued,
strongly continuous in $t$, $\left(\mathcal{F}_{t}\right)$-adapted
process $\left(u_{t}\right)$ which fulfills part (2) of definition
\ref{def: L2 solution} as well as that $\forall\varphi\in C_{0}^{\infty}\left(\mathbb{R}^{n}\right)$
we have $\mathbb{P}$-a.s (using the Einstein summation convention%
\footnote{for $i,j\in\left\{ 1,\ldots,n\right\} $ and $k\in\left\{ 1,\ldots,d_{1},\ldots,d_{1}+d_{2},\ldots,d_{1}+d_{2}+d_{3}\right\} $
and setting $\sigma_{k}=0$ for $k>d_{1}$,$\nu_{k}=0$ for $k\leq d_{1}$
or $k>d_{1}+d_{2}$ and $g_{k}=0$ for $k\leq d_{1}+d_{2}$%
}) 
\begin{align*}
\left\langle u_{t},\varphi\right\rangle -\left\langle u_{0},\varphi\right\rangle  & =\int_{0}^{t}\left(-\left\langle \tilde{a}^{ij}\partial_{j}u_{r},\partial_{i}\varphi\right\rangle +\left\langle \overline{b}_{r}^{i}\partial_{i}u_{r}+\tilde{c}_{r}u_{r}+\tilde{f}_{r},\varphi\right\rangle \right)dr\\
 & \,\,\,\,+\int_{0}^{t}\left\langle \sigma_{k}^{j}\left(r\right)\partial_{j}u_{r}+\nu_{k}\left(r\right)u_{r}+g_{k}\left(r\right),\varphi\right\rangle dB_{r}^{k}
\end{align*}
where 
\begin{align*}
\tilde{a}^{ij} & =a^{ij}+\frac{1}{2}\sum_{k=1}^{d}\sigma_{k}^{i}\sigma_{k}^{j}\\
\tilde{b}^{i} & =b^{i}+\frac{1}{2}\sum_{k=1}^{d}\left(\sigma_{k}^{j}\left(\partial_{j}\sigma_{k}^{i}\right)+2\nu_{k}\sigma_{k}^{i}\right)\\
\tilde{c} & =c+\frac{1}{2}\sum_{k=1}^{d}\left(\sigma_{k}^{i}\left(\partial_{i}\nu_{k}\right)+\nu_{k}^{2}\right)\\
\tilde{f} & =f+\frac{1}{2}\sum_{k=1}^{d}\left(\sigma_{k}^{i}\partial_{i}g+\nu_{k}g_{k}+g\right)
\end{align*}
and 
\[
\overline{b}^{i}=\tilde{b}^{i}-\left(\partial_{j}\tilde{a}^{ij}\right)
\]
(i.e.~$\tilde{a}$ and $\tilde{b}$ are the coefficients that appear
in $\widetilde{L}$ due to the switch from Ito to Stratonovich integration
and $\overline{b}$ results from the switch to divergence form). Now
using integration by parts we can rewrite the above divergence form
into the adjoint formulation \eqref{eq:weak_non_div} as required
by definition \ref{def: L2 solution}.\end{proof}We can now prove
the main result of this section which identifies the RPDE solution
with the classic $L^{2}$-solution whenever both exist.

\begin{proposition} \label{prop:weak L2}Assume that $L$ and $\Lambda$
fulfill assumption \ref{assumptions}\textbf{ }as well as the assumptions
of Theorem \ref{ThmMain}. If we denote with $u^{\boldsymbol{B}}$
the RPDE solution given in Theorem \ref{ThmMain} driven by Enhanced
Brownian motion $\boldsymbol{B}$ then $\left(u_{t}^{\boldsymbol{B}}\right)_{t\geq0}$
is a (and hence a version of the unique) $L^{2}$-solution.\end{proposition}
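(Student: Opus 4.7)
The plan is to exhibit a single approximating sequence which simultaneously realises both solution concepts and then pass to the limit in two different senses. Let $(B^{\varepsilon})$ denote the piecewise linear approximation of Brownian motion $B$ at mesh $\varepsilon$. By the standard convergence results for the enhanced lift (as recalled in \cite[Chapters 13, 14]{friz-victoir-book}), $B^{\varepsilon}$ converges to the Brownian rough path $\boldsymbol{B}$ in $p$-variation rough path metric for any $p>2$, almost surely. For each $\varepsilon>0$, since $B^{\varepsilon}$ is a smooth path, the equation
\[
\dot{u}^{\varepsilon}+L(t,x,u^{\varepsilon},Du^{\varepsilon},D^{2}u^{\varepsilon})=\sum_{k=1}^{d}\Lambda_{k}(t,x,u^{\varepsilon},Du^{\varepsilon})\dot{B}_{t}^{k;\varepsilon},\qquad u^{\varepsilon}(0,\cdot)=u_{0}(\cdot),
\]
is a classical (deterministic, $\omega$ by $\omega$) linear parabolic equation with time-dependent smooth coefficients. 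Under assumption~\ref{assumptions} (non-degeneracy of $A$, boundedness and smoothness of all coefficients), this PDE has a unique classical solution which is simultaneously a viscosity solution (hence the one produced by Theorem~\ref{ThmMain}) and an $L^{2}(\mathbb{R}^{n})$-valued strongly continuous, adapted solution of the corresponding variational problem (hence the one produced by Theorem~\ref{thm:L2 weak}). The equivalence of the two notions of solution for smooth drivers is the standard ``energy estimate + integration by parts'' statement from the variational theory; it relies on the regularity of $u^{\varepsilon}$ which in turn follows from superparabolicity and smoothness of coefficients.

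Next I invoke the two stability results in parallel. On the rough-path side, Theorem~\ref{ThmMain} gives $u^{\varepsilon}\to u^{\boldsymbol{B}}$ locally uniformly on $[0,T]\times\mathbb{R}^{n}$, $\mathbb{P}$-almost surely along (a subsequence of) $\varepsilon\to 0$. On the $L^{2}$ side, the classical Wong--Zakai type theorems for linear SPDEs of second order with piecewise linear approximation of Brownian driver (see \cite{MR2268661,MR1313027,MR1390565,MR2267702} referenced in the introduction) yield that the same $u^{\varepsilon}$ converges to the unique $L^{2}$-solution $u^{\mathrm{var}}$ of \eqref{eq:spde}, in the strong sense of $C([0,T];L^{2}(\mathbb{R}^{n}))$ in probability (equivalently, almost surely along a subsequence).

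Having extracted a common subsequence, the identification is then purely measure-theoretic: for $\mathbb{P}$-a.e.\ $\omega$ and a.e.\ $t\in[0,T]$, $u_{t}^{\varepsilon}(\omega,\cdot)$ converges locally uniformly to $u_{t}^{\boldsymbol{B}}(\omega,\cdot)$, and also in $L^{2}(\mathbb{R}^{n})$ to $u_{t}^{\mathrm{var}}(\omega,\cdot)$. Local uniform convergence plus uniform $L^{2}$ boundedness forces the two limits to agree pointwise a.e., and adaptedness and strong continuity are preserved by the $L^{2}$ limit, so $(u_{t}^{\boldsymbol{B}})_{t\in[0,T]}$ is a version of $u^{\mathrm{var}}$. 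Uniqueness in Theorem~\ref{thm:L2 weak} completes the identification.

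The main obstacle is the coexistence of the two modes of convergence. On the rough-path side Theorem~\ref{ThmMain} delivers only locally uniform convergence of bounded continuous functions, while the Wong--Zakai statements in the variational literature are formulated in Hilbert spaces, typically with assumptions of coercivity and with $u_{0}\in L^{2}(\mathbb{R}^{n})$ rather than $u_{0}\in \mathrm{BUC}(\mathbb{R}^{n})$. Reconciling the hypotheses requires one to truncate or localise $u_{0}$ (e.g.\ approximating $u_{0}\in\mathrm{BUC}\cap L^{2}$ and using contraction \eqref{EqWeakProper} in Theorem~\ref{ThmMain} plus $L^{2}$ contraction on the other side) and to use a common subsequence. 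The rest is routine bookkeeping, since the nondegeneracy and smoothness in assumption~\ref{assumptions} easily imply the regularity needed at the level of the $u^{\varepsilon}$.
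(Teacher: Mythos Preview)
Your overall strategy—piecewise linear approximation of $B$, identification of the two solution notions at the approximate level, then passing to the limit on both sides—is exactly Step~1 of the paper's proof. The difference is that the paper carries out this argument only under the \emph{additional} hypothesis that all coefficients of $L$ and $\Lambda$ lie in $C_{0}^{\infty}((0,T)\times\mathbb{R}^{n})$, and then has a separate Step~2 to remove that extra hypothesis. The reason is that the Wong--Zakai theorem actually invoked (Gy\"ongy--Shmatkov, \cite[Theorem~2.1]{MR2268661}) requires the free terms $f(t,\cdot),g(t,\cdot)$ to be $H^{5}(\mathbb{R}^{n})$-valued; assumption~\ref{assumptions} only gives $f,g\in C_{b}\cap L^{2}$, which is not enough. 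Your sentence ``the nondegeneracy and smoothness in assumption~\ref{assumptions} easily imply the regularity needed'' papers over exactly this gap.

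Step~2 in the paper is not just bookkeeping: one truncates and mollifies the \emph{coefficients} (not $u_{0}$, as you suggest), obtains mollified operators $L^{\epsilon},\Lambda^{\epsilon}$, applies Step~1 to get $u^{\epsilon,\boldsymbol{B}}$ identified with the $L^{2}$-solution for the mollified problem, and then shows $u^{\epsilon,\boldsymbol{B}}\to u^{\boldsymbol{B}}$ locally uniformly. This last convergence is nontrivial: it requires stability of the rough flows $\psi^{\boldsymbol{B},\epsilon},\phi^{\boldsymbol{B},\epsilon},\varphi^{\boldsymbol{B},\epsilon}$ under $Lip^{\gamma}$-approximation of the vector fields (an interpolation argument plus \cite[Corollary~10.39, Theorems~11.12--11.13]{friz-victoir-book}), followed by viscosity stability for the transformed PDE. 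One then passes to the limit in the weak formulation and checks the $H^{1}$ regularity required by Definition~\ref{def: L2 solution} via the uniform-in-$\epsilon$ energy estimate from the variational theory. Your proposal contains none of this, so as written it only proves the proposition under the extra $C_{0}^{\infty}$ assumption on coefficients.
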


\begin{proof} \textbf{Step 1.} Assume additionally to assumption
\ref{assumptions} that all coefficients appearing in $L$ and $\Lambda$
are in $C_{0}^{\infty}\left(\left(0,T\right)\times\mathbb{R}^{n}\right)$
(in step 2 we get rid of this assumption). Define the adapted, piecewise
linear approximation $B^{n}$ to $B$ as 
\[
B_{t}^{n}=B_{t_{k-1}}+n\frac{\left(t-t_{k-1}\right)}{T}\left(B_{t_{k}}-B_{t_{k-1}}\right)
\]
for $t\in\left[t_{k},t_{k+1}\right)$ with $t_{k}=k\frac{T}{n}$.
For every $n\in\mathbb{N}$ we denote with $u\left(B_{n}\right)$
the $L^{2}$-solutions of (\ref{eq:spde}) where $B$ is replaced
by $B^{n}$ and with $u$ the unique $L^{2}$-solution of theorem
\ref{thm:L2 weak}. Further, denote with $\boldsymbol{B}^{n}=S_{2}\left(B^{n}\right)$
the rough path lift of $B^{n}$ and with $u^{\boldsymbol{B}_{n}}$
resp.~$u^{\boldsymbol{B}}$ the viscosity solution for the random
rough path $\boldsymbol{B}^{n}$ resp.~$\boldsymbol{B}$ given in
theorem \ref{ThmMain}. For $\epsilon>0$ and $n\in\mathbb{N}$ the
regularity of the coefficients allows to identify $u^{\boldsymbol{B}_{n}\left(\omega\right)}$
with $u\left(B^{n}\left(\omega\right)\right)$ (both are the unique,
bounded, smooth solutions of a parabolic PDE with smooth coefficients
which depend on $\omega$). The Wong--Zakai result in \cite[Theorem 2.1]{MR2268661}
(all coefficients are smooth and additionally $f\left(t,.\right),g\left(t,.\right)$
have compact support, hence are $H^{5}\left(\mathbb{R}^{n}\right)$-valued),
tells us that 
\[
u\left(B_{n}\right)\rightarrow u\left(B\right)\in L^{2}\left(\left[0,T\right],H^{1}\right)\text{ a.s.}
\]
where the convergence takes place in $L^{2}\left(\left[0,T\right],H^{1}\right)$
and hence also in $L^{2}\left(\left[0,T\right]\times\mathbb{R}^{n}\right)$
(\textit{much} more is true here of course). On the other hand, we
know that a.s.~$\boldsymbol{B}^{n}$ converges to $\boldsymbol{B}$
in rough path metric (see \cite{friz-05}) and from \ref{ThmMain}
we conclude that 
\[
u^{\boldsymbol{B}_{n}}\rightarrow u^{\mathbf{B}}\text{ a.s.}
\]
locally uniformly on $\left[0,T\right]\times\mathbb{R}^{n}$. It is
an now easy matter to identify the $L^{2}$ and loc.~uniform limit,
\[
u\left(B\right)=u^{\mathbf{B}}\text{ a.s}
\]
(viewing $u\left(B\right)$ a.s.~$C\left(\left[0,T\right],L^{2}\left(\mathbb{R}^{n}\right)\right)$-valued,
this means that for a.e.~$\omega$, $\forall t\in\left[0,T\right]$,
$u_{t}\left(B\left(\omega\right)\right)=u_{t}^{\mathbf{B}\left(\omega\right)}$
as equality in $L^{2}\left(\mathbb{R}^{n}\right)$; in particular,
$u^{\mathbf{B}\left(\omega\right)}$ constitutes a continuous version
in $t,x$; once more much more is true here). Further we know that
$u\left(B\right)\in L^{2}\left(\left[0,T\right],H^{1}\left(\mathbb{R}^{n}\right)\right)$
$\mathbb{P}$-a.s.~and hence fulfills definition \ref{def: L2 solution}.
Hence, we conclude that $\left(u_{t}^{\mathbf{B}}\right)$ is the
unique $L^{2}$-solution of (\ref{eq:spde}) (strictly speaking, a
continuous function in the equivalence class).

\textbf{Step 2.} For every $\epsilon>0$ truncate all coefficients
outside a ball of radius $\epsilon^{-1}$ and smooth by convolution
with a mollifier $m^{\epsilon}$ (viz.~$m^{\epsilon}\left(t,x\right)=\epsilon^{-n+1}m\left(\frac{t}{\epsilon},\frac{x}{\epsilon}\right)$
where $m$ has compact support, is non-negative and has total mass
one)%
\footnote{\label{fn:mol}The mollification uses values of the coefficients for
$t$ outside $\left[0,T\right]$ therefore we simply define the coefficients
for $t\in\mathbb{R}\backslash\left[0,T\right]$ by constant continuation.%
} to arrive at the new operators $L^{\epsilon}$ and $\Lambda^{\epsilon}$.
It is easy to see that $L^{\epsilon}$,$\Lambda^{\epsilon}$ again
fulfill assumption \ref{assumptions}, hence Theorem \ref{thm:L2 weak}
applies and gives existence and uniqueness of the associated $L^{2}$-solution.
Denote with $u^{\epsilon;\boldsymbol{B}}$ the associated RPDE solution%
\footnote{with abuse of notation we identify the operators $L^{\epsilon}$,
$L$ (and similarly $\Lambda$,$\Lambda^{\epsilon}$) with functions
on $\left[0,T\right]\times\mathbb{R}^{n}\times\mathbb{R}\times\mathbb{R}^{n}\times\mathbb{S}^{n}\rightarrow\mathbb{R}$
as required in the viscosity setting in the obvious way.%
} (with random rough path $\boldsymbol{B}$) and note that by step
1, $u^{\epsilon;\boldsymbol{B}}$ coincides with the $L^{2}$-solution.
We now claim that 
\[
u^{\epsilon,\boldsymbol{B}}\rightarrow u^{\boldsymbol{B}}\,\,\, a.s.
\]
uniformly on compacts. To see this, note that by the construction
given in theorem \ref{ThmMain}, $u^{\boldsymbol{B}}$ is the composition
of a viscosity solution $\tilde{u}^{\epsilon}$ with rough path flows
and $\tilde{u}^{\epsilon}$ itself is a solution of a linear PDE 
\[
\partial v_{t}^{\epsilon}+\overline{L}^{\epsilon}\left(t,x,v^{\epsilon},Dv^{\epsilon},D^{2}v^{\epsilon}\right)=0;
\]
the precise form of $\overline{L}^{\epsilon}$ is as in Theorem \ref{ThmMain}
given by the transformation via rough path flows, that is 
\[
\overline{L}^{\epsilon}=\left[^{\phi^{\boldsymbol{B},\epsilon}}\left(\left(L^{\epsilon}\right)^{\psi^{\boldsymbol{B},\epsilon}}\right)\right]_{\varphi^{\boldsymbol{B},\epsilon}}.
\]
(where $\phi^{\boldsymbol{B},\epsilon},\psi^{\boldsymbol{B},\epsilon}$
and $\varphi^{\boldsymbol{B},\epsilon}$ denote the rough flows associated
with the truncated and mollified vector fields). Further we claim
that the truncated and mollified $Lip^{\gamma}$-vector fields (appearing
in $\Lambda^{\epsilon}$) converge locally uniformly with locally
uniform $Lip^{\gamma}$ bounds: given $V\in Lip^{\gamma}$ denote
$V^{\epsilon}$ as the vector field given by truncation outside radius
$\epsilon^{-1}$ and convolution of $V$ with $m^{\epsilon}$. Of
course, $V^{\epsilon}$ converges locally uniformly to $V$ and (cf.~\cite[p123, p159]{stein1970singular})
locally uniform $Lip^{\gamma}$ bounds are readily seen to hold true
for every $V^{\epsilon}$, $\epsilon>0$. An interpolation argument
then shows, locally, convergence in $Lip^{\gamma'}$ for $\gamma'<\gamma$
(we only need $\gamma'=\gamma-1$). Given a geometric $p$-rough path
$\boldsymbol{z}$ with $p<\gamma$ it then follows from \cite[Corollary 10.39]{friz-victoir-book}
(together with the remark that the $\left|.\right|_{Lip^{\gamma-1}}$-norm
can be replaced by the local Lipschitz norm, restricted to a big enough
ball in which both RDE solutions live) that the (unique) RDE solutions
(started at a fixed point) to $dy^{\epsilon}=V^{\epsilon}\left(y^{\epsilon}\right)d\boldsymbol{\boldsymbol{z}}$
converge as $\epsilon\rightarrow0$ to the (unique) RDE solution of
$dy=V\left(y\right)d\boldsymbol{\boldsymbol{z}}$. As in \cite[Theorem 11.12 and Theorem 11.13]{friz-victoir-book}
this convergence can be raised to the level of $C^{k}$--diffeomorphisms,
provided $V$ is assumed to be $Lip^{\gamma+k-1}$ for $k\in\mathbb{N}$
-- the case of interest to us is given by $\gamma>4$ and $p\in\left(2,\gamma-2\right)$
which results in stability of the flow seen as $C^{2}$--diffeomorphism.
This shows that $\overline{L}^{\epsilon}\rightarrow\left[^{\phi^{\boldsymbol{B}}}\left(L^{\psi^{\boldsymbol{B}}}\right)\right]_{\varphi^{\boldsymbol{B}}}$
as $\epsilon\rightarrow0$ uniformly on compacts and the stability
properties of viscosity solutions guarantee (the same argument as
given in theorem \ref{ThmMain}) that $v^{\epsilon}\rightarrow v$,
hence $u^{\boldsymbol{B},\epsilon}\rightarrow u^{\boldsymbol{B}}$
(loc.~uniformly on $\left[0,T\right)\times\mathbb{R}^{n}$) a.s.
From the first step it follows that $u^{\boldsymbol{B},\epsilon}$
is the unique $L^{2}$-solution, i.e. $\forall t\in\left[0,T\right]$
\[
\left\langle u_{t}^{\boldsymbol{B},\epsilon},\varphi\right\rangle -\left\langle u_{0},\varphi\right\rangle =\int_{0}^{t}\left\langle u_{t}^{\boldsymbol{B},\epsilon},\left(\widetilde{L}^{\epsilon}\right)^{\star}\varphi\right\rangle dr+\sum_{k=1}^{d}\int_{0}^{t}\left\langle u_{r}^{\boldsymbol{B},\epsilon},\left(\Lambda_{k}^{\epsilon}\right)^{\star}\varphi\right\rangle dB_{r}^{k}
\]
Sending $\epsilon\rightarrow0$ in above equality shows that point
(2) of definition \ref{def: L2 solution} is fulfilled. Now for every
$\epsilon>0$, classic variational arguments, see \cite[Chapter 4, Theorem 1, p131]{MR1135324},
show that there exists a constant $C^{\epsilon}>0$ which depends
only on $T,n,d$ and $\sup_{t,x,i,j,k}\left(\left|\tilde{a}^{\epsilon;ij}\right|,\left|b^{\epsilon;i}\right|,\left|\sigma_{k}^{\epsilon;i}\right|,\left|\nu_{k}^{\epsilon;i}\right|\right)$
(which is finite by assumption \ref{assumptions}) s.t.~ 
\begin{align*}
 & \mathbb{E}\left[\sup_{t\in\left[0,T\right]}\left|u_{r}^{\epsilon,\boldsymbol{B}}\right|_{L^{2}\left(\mathbb{R}^{n}\right)}^{2}+\int_{0}^{T}\left|u_{r}^{\epsilon,\boldsymbol{B}}\right|_{H^{1}\left(\mathbb{R}^{n}\right)}^{2}dr\right]\\
\leq & C^{\epsilon}.\left[\left|u_{0}\right|_{L^{2}\left(\mathbb{R}^{n}\right)}^{2}+\mathbb{E}\int_{0}^{T}\left(\left|f_{r}^{\epsilon}\right|_{H^{-1}\left(\mathbb{R}^{n}\right)}^{2}+\sum_{k=d_{1}+d_{2}+1}^{d=d_{1}+d_{2}+d_{3}}\left|\left(g_{r}^{\epsilon}\right)^{k}\right|_{L^{2}\left(\mathbb{R}^{n}\right)}^{2}\right)dr\right].
\end{align*}
By the estimate 
\[
\left|f_{r}^{\epsilon}\right|_{H^{-1}\left(\mathbb{R}^{n}\right)}\lesssim\left|f_{r}^{\epsilon}\right|_{L^{2}\left(\mathbb{R}^{n}\right)}=\left|\left(f\left(.\right)1_{\left|.\right|<\epsilon^{-1}}\right)\star m^{\epsilon}\right|_{L^{2}\left(\mathbb{R}^{n}\right)}\leq\left|f\left(.\right)1_{\left|.\right|<\epsilon^{-1}}\right|_{L^{2}\left(\mathbb{R}^{n}\right)}\leq\left|f\right|_{L^{2}\left(\mathbb{R}^{n}\right)}
\]
(and similarly for $g^{\epsilon}$), the right-hand side can be uniformly
bounded in $\epsilon$, leading to the desired regularity properties
of $u^{\boldsymbol{B}}$ (as required by point (2) in definition \ref{def: L2 solution}).\end{proof}

\begin{remark} Classical $L^{2}$-theory of SPDEs gives, with probability
one, $u(t,.,\omega)\in L^{2}\left(\mathbb{R}^{n}\right)$ for all
$t\in[0,T]$ and then in the Sobolev space $H^{1}\left(\mathbb{R}^{n}\right)$
for a.e. $t\in[0,T]$. It is not clear, in general, if a continuous
(in $t,x$) version of $u$ exists. Under further regularity assumptions
one finds that $u(t,.,\omega$) takes values in higher Sobolev spaces
$H^{l}\left(\mathbb{R}^{n}\right),l=1,2...$. Since Sobolev embedding
theorems are dimension-dependent (recall $H^{l}\left(\mathbb{R}^{n}\right)\subset C\left(\mathbb{R}^{n}\right)$
when $l>n/2$) the regularity required for a continuous version will
grow with the dimension $n$. In contrast, our approach effectively
gives sufficient conditions, without dimension dependence, under which
$L^{2}$-solutions admit continuous versions. We note that such considerations
also motivated Krylov's $L^{p}$-theory \cite[p185]{KrylovAnAp}\end{remark}

\subsection{A $L_{loc}^{1}$-solution}

Theorem \ref{ThmMain} applied with enhanced Brownian motion provides
the unique RPDE viscosity solution even if 
\begin{enumerate}
\item $L$ is degenerate elliptic,
\item $u_{0}\in BUC\left(\mathbb{R}^{n}\right)$. 
\end{enumerate}
Under such conditions one can not hope for the existence of a $L^{2}$-solution:
the degeneracy of $L$ does not lead to $H^{1}$-regularity in space
and the initial data $u_{0}$ does not fit into a $L^{2}$-theory
(in fact $L^{p}$ for $1\leq p<\infty$, e.g.~by taking~$u_{0}\equiv1$;
however one could consider weighted Sobolev spaces). Hence, our new
assumptions read,

\begin{assumption} \label{assumptions-L_loc} For $i,j\in\left\{ 1,\ldots,n\right\} $,
$k\in\left\{ 1,\ldots,d\right\} $,
\begin{enumerate}
\item $a^{ij},b_{i},c,f$ as well as $\sigma_{k}^{i},\nu_{k},g_{k}$ are
in $C_{b}\left(\left[0,T\right]\times\mathbb{R}^{n}\right)$ and $\sigma_{k}^{i},\nu_{k}^{i},g$
have one and $a^{ij},\sigma_{k}^{i}$ have two, continuous, bounded
(uniformly in $t$) derivatives in space,
\item $f,g\in L^{2}\left(\left[0,T\right]\times\mathbb{R}^{n}\right)$,
\item $A=a\cdot a^{T}$.
\end{enumerate}
\end{assumption}Motivated by above remarks we give the following
definition. \begin{definition} By an $L_{loc}^{1}$-solution we mean
a $L_{loc}^{1}\left(\mathbb{R}^{n}\right)$-valued strongly continuous
$\left(\mathcal{F}_{t}\right)$-adapted process $u=\left(u_{t}\right)_{t\in\left[0,T\right]}$
such that $\forall\varphi\in C_{0}^{\infty}\left(\mathbb{R}^{n}\right)$
we have 
\[
\left\langle u_{.},\varphi\right\rangle -\left\langle u_{0},\varphi\right\rangle =\int_{0}^{.}\left\langle u_{r},\left(\widetilde{L}\right)^{\star}\varphi\right\rangle dr+\sum_{k=1}^{d}\int_{0}^{.}\left(u,\left(\Lambda_{k}\right)^{\star}\varphi\right)_{r}dB_{r}^{k}\,\,\,\left(d\lambda\otimes d\mathbb{P}\right)-a.s.
\]
where $\widetilde{L}\varphi:=L\varphi+\frac{1}{2}\sum_{k=1}^{d}\Lambda_{k}\Lambda_{k}\varphi$.
\end{definition} 

\begin{remark} Above definition comes of course with a caveat: $L_{loc}^{1}$
is not a Banach space and the standard uniqueness results do not apply.
However, note that we could have given a more restrictive definition
of a weak solution by using weighted $L^{p}$ or Orlicz-spaces instead
of $L_{loc}^{1}$. Either way, we are not aware of a uniqueness theory
for degenerate SPDEs in either such a setup which seems to be a challenging
question in its own right. Below we only give the existence for $L_{loc}^{1}$-weak
solutions by showing that the viscosity RPDE solution is a $L_{loc}^{1}$-solution.\end{remark} 

\begin{proposition} Let $B$ be a $d$-dimensional Brownian motion,
$u_{0}\in BUC$ and assume $L$,$\Lambda$ fulfill the conditions
of theorem \ref{ThmMain}. Then $\left(u_{t}^{\boldsymbol{B}}\right)_{t\in\left[0,T\right]}$
is a $L_{loc}^{1}$-solution.\end{proposition}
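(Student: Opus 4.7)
The plan is to reduce to Proposition \ref{prop:weak L2} by a double approximation -- one that regularizes both the operator (so that the ellipticity assumption in \ref{assumptions} is met) and the initial datum (so that it belongs to $L^{2}$) -- and then to pass to the limit in the weak formulation, exploiting the local nature of the test functions $\varphi\in C_{0}^{\infty}(\mathbb{R}^{n})$ together with the locally uniform stability of the viscosity/RPDE solutions given by Theorem \ref{ThmMain}.

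First, pick a cut-off/mollifier sequence: define $L^{\epsilon}$ and $\Lambda^{\epsilon}$ by truncating the coefficients outside a ball of radius $\epsilon^{-1}$ and mollifying (as in the proof of Proposition \ref{prop:weak L2}), and then replace $A^{\epsilon}$ by $A^{\epsilon}+\epsilon I$ so that uniform ellipticity holds. Choose $u_{0}^{\epsilon}\in BUC(\mathbb{R}^{n})\cap L^{2}(\mathbb{R}^{n})$ (e.g.\ $u_{0}^{\epsilon}=u_{0}\cdot\chi^{\epsilon}$ for a smooth compactly supported cut-off $\chi^{\epsilon}\uparrow 1$) such that $u_{0}^{\epsilon}\to u_{0}$ locally uniformly and is bounded uniformly in $\epsilon$. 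Denote by $u^{\mathbf{B},\epsilon}$ the RPDE solution provided by Theorem \ref{ThmMain} with data $(L^{\epsilon},\Lambda^{\epsilon},u_{0}^{\epsilon})$. Since all hypotheses of Proposition \ref{prop:weak L2} are now in force, $u^{\mathbf{B},\epsilon}$ agrees with the unique $L^{2}$-solution, and in particular, for every $\varphi\in C_{0}^{\infty}(\mathbb{R}^{n})$,
\[
\langle u_{t}^{\mathbf{B},\epsilon},\varphi\rangle-\langle u_{0}^{\epsilon},\varphi\rangle=\int_{0}^{t}\langle u_{r}^{\mathbf{B},\epsilon},(\widetilde{L^{\epsilon}})^{\star}\varphi\rangle\,dr+\sum_{k=1}^{d}\int_{0}^{t}\langle u_{r}^{\mathbf{B},\epsilon},(\Lambda_{k}^{\epsilon})^{\star}\varphi\rangle\,dB_{r}^{k}.
\]

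Next, I would pass $\epsilon\to 0$. The coefficients $L^{\epsilon},\Lambda^{\epsilon}$ converge locally uniformly to $L,\Lambda$ (with uniform bounds), the rough flows $\psi^{\mathbf{B},\epsilon},\phi^{\mathbf{B},\epsilon},\varphi^{\mathbf{B},\epsilon}$ converge locally uniformly (with uniform local bounds on the first two spatial derivatives) to $\psi^{\mathbf{B}},\phi^{\mathbf{B}},\varphi^{\mathbf{B}}$, exactly as in Step 2 of the proof of Proposition \ref{prop:weak L2}. The stability of viscosity solutions then yields, almost surely, the locally uniform convergence $u^{\mathbf{B},\epsilon}\to u^{\mathbf{B}}$ on $[0,T]\times\mathbb{R}^{n}$, and the contraction property from Theorem \ref{ThmMain} gives $\sup_{\epsilon,t,x}|u_{t}^{\mathbf{B},\epsilon}(x)|\le C(\omega)$ on any fixed compact.

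Now I would insert these convergences in the weak identity. Since $\varphi,(\widetilde{L^{\epsilon}})^{\star}\varphi,(\Lambda_{k}^{\epsilon})^{\star}\varphi$ are all supported in a common compact $K\subset\mathbb{R}^{n}$ (the support of $\varphi$), the pairings involve only local values of $u^{\mathbf{B},\epsilon}$; local uniform convergence on $K$ combined with the uniform bound permits bounded convergence to handle $\langle u_{t}^{\mathbf{B},\epsilon},\varphi\rangle\to\langle u_{t}^{\mathbf{B}},\varphi\rangle$, $\langle u_{0}^{\epsilon},\varphi\rangle\to\langle u_{0},\varphi\rangle$, and the Lebesgue integral term (after noting $(\widetilde{L^{\epsilon}})^{\star}\varphi\to\widetilde{L}^{\star}\varphi$ uniformly on $K$). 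For the stochastic integral I would use It\^o's isometry: the integrand $\langle u_{r}^{\mathbf{B},\epsilon},(\Lambda_{k}^{\epsilon})^{\star}\varphi\rangle$ is adapted, uniformly bounded on $[0,T]\times\Omega$ by $C(K)\cdot|\varphi|_{C^{1}}\cdot\sup_{\epsilon,t}\|u_{t}^{\mathbf{B},\epsilon}\|_{L^{\infty}(K)}$, and converges pathwise to $\langle u_{r}^{\mathbf{B}},\Lambda_{k}^{\star}\varphi\rangle$, so dominated convergence gives $L^{2}(\Omega)$-convergence of the It\^o integrals along a subsequence, a.s.\ as a subsubsequence. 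Strong continuity in $t$ of $r\mapsto u_{r}^{\mathbf{B}}$ as an $L_{loc}^{1}$-valued map follows from the locally uniform continuity of $u^{\mathbf{B}}$ provided by Theorem \ref{ThmMain}; adaptedness is inherited from that of $u^{\mathbf{B},\epsilon}$ (each $u^{\mathbf{B},\epsilon}$ is measurable w.r.t.\ $\mathcal{F}_{t}$ since it is the $L^{2}$-solution).

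The main technical obstacle is the joint passage to the limit in the stochastic integral with simultaneously varying integrand \emph{and} (through the added $\epsilon I$) operator: one must ensure that adding $\epsilon\Delta$ to $L$ is compatible with the RPDE framework, i.e.\ that $L^{\epsilon}=L+\epsilon\Delta$ still satisfies Assumption \ref{main_assumptions} uniformly in $\epsilon$ and that the transformed operator $\widetilde{L^{\epsilon}}$ remains uniformly controllable; this holds because $A^{\epsilon}=A+\epsilon I$ still factors as $a^{\epsilon}(a^{\epsilon})^{T}$ with bounded Lipschitz $a^{\epsilon}$ (uniformly so), and the transformations of Section \ref{sec:trans} commute linearly with this additive perturbation. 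Once this is in hand, passing to the limit in the weak identity produces precisely the $L_{loc}^{1}$-solution identity for $(u_{t}^{\mathbf{B}})$.
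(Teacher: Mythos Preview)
Your proof is correct and follows essentially the same approach as the paper: add $\epsilon I$ to $A$ to restore uniform ellipticity, mollify/truncate $u_{0}$ (and, as you do, the coefficients) so that Proposition \ref{prop:weak L2} applies, write down the weak identity for $u^{\mathbf{B},\epsilon}$, and then pass to the limit using the locally uniform stability of the RPDE/viscosity solution. The paper's proof is terser---it simply records the weak identity with an explicit extra term $\epsilon\int_{0}^{.}\langle u_{r}^{\epsilon},\sum\partial_{i}^{2}\varphi\rangle\,dr$ and then invokes locally uniform convergence---whereas you spell out the limit in the stochastic integral via It\^o isometry and dominated convergence, and you also address the factorisation $A+\epsilon I=a^{\epsilon}(a^{\epsilon})^{T}$ (take $a^{\epsilon}=[\,a\,\vert\,\sqrt{\epsilon}\,I\,]$), which the paper leaves implicit. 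One small remark: the uniform bound on $u^{\mathbf{B},\epsilon}$ coming from the contraction estimate of Theorem \ref{ThmMain} is in fact global in $x$ and deterministic (not merely $C(\omega)$ on compacts), which makes the dominated convergence step for the stochastic integral even cleaner than you state.
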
 \begin{proof} For
$\epsilon>0$ consider the elliptic operator $L^{\epsilon}:=L+\epsilon\sum_{i=1}^{d}\partial_{i}^{2}$
and truncate and mollify $u_{0}$ to get $u_{0}^{\epsilon}\in C_{c}^{\infty}$
s.t.~$u_{0}^{\epsilon}\rightarrow u_{0}$ uniformly on compacts in
$\mathbb{R}^{n}$. Proposition \ref{prop:weak L2} shows for $\epsilon>0$
that the RPDE solution $u^{\epsilon}\in BUC\left(\left[0,T\right]\times\mathbb{R}^{n}\right)$
associated with $\left(L^{\epsilon},\Lambda^{\epsilon},u_{0}^{\epsilon},\boldsymbol{B}\right)$
is (in the equivalence class of) the unique $L^{2}$-solution; especially
$u^{\epsilon}$ is a $L_{loc}^{1}$-solution and therefore fulfills
$\left(d\lambda\otimes d\mathbb{P}\right)-a.s.$ that 
\[
\left\langle u_{.}^{\epsilon},\varphi\right\rangle -\left\langle u_{0}^{\epsilon},\varphi\right\rangle =\int_{0}^{.}\left\langle u_{r}^{\epsilon},\left(\widetilde{L^{\epsilon}}\right)^{\star}\varphi\right\rangle dr+\epsilon\int_{0}^{.}\left\langle u_{r}^{\epsilon},\sum_{i=1}^{d}\partial_{i}^{2}\varphi\right\rangle dr+\sum_{k=1}^{d}\int_{0}^{.}\left\langle u_{r}^{\epsilon},\Lambda_{i}^{\star}\varphi\right\rangle dB_{r}^{k}
\]
Conclude by noting that the locally uniform converge $u^{\epsilon}\rightarrow u$
on $\left[0,T\right]\times\mathbb{R}^{n}$ follows from the stability
properties of standard viscosity solutions ($u^{\epsilon}$ is given
by a transformation with RDE flows as a standard viscosity solution
with an extra term including a Hessian which vanishes as $\epsilon\rightarrow0$).
\end{proof}

\section{Applications to stochastic partial differential equations\label{AppSPDE}}

We now discuss some further applications of theorem \ref{ThmMain}
when applied to a stochastic driving signal, i.e.\ by taking $\mathbf{z}$
to be a realization of a continuous semi-martingale $Z$ and its stochastic
area, say $\mathbf{Z}\left(\omega\right)=\left(Z,A\right)$; the most
prominent example being Brownian motion and Lévy's area.

\begin{remark}{[}Itô versus Stratonovich{]} Note that similar \textbf{SPDEs
in Itô-form} need not be, in general, well-posed. Consider the following
(well-known) linear example 
\[
du=u_{x}dB+\lambda u_{xx}dt,\,\,\lambda\geq0.
\]
A simple computation shows that $v\left(x,t\right):=u\left(x-B_{t},t\right)$
solves the (deterministic) PDE $\dot{v}=\left(\lambda-1/2\right)v_{xx}$.
From elementary facts about the heat equation we recognize that, for
$\lambda<1/2$, this equation, with given initial data $v_{0}=u_{0}$,
is not well-posed. In the (Itô-)\ SPDE literature, starting with
\cite{MR553909}, this has led to coercivity conditions, also known
as super-parabolicity assumptions, in order to guarantee well-posedness.
\end{remark}

\begin{remark}{[}Regularity of noise coefficients{]} Applied in the
semimartingale context (finite $p$-variation for any $p>2$) the
regularity assumption of theorem \ref{ThmMain} reads Lip$^{4+\varepsilon}$,
$\varepsilon>0$. While our arguments do not appear to leave much
room for improvement we insist that working directly with Stratonovich
flows (rather than rough flows) will not bring much gain: the regularity
requirements are essentially the same. Itô flows, on the other hand,
require one degree less in regularity. In turn, there is a potential
loss of well-posedness and the resulting SPDE is not robust as a function
of its driving noise (similar to classical Itô stochastic differential
equations). \end{remark}

\begin{remark}{[}Space-time regularity of SPDE\ solutions{]} Since
$u\left(t,x\right)$ is the image of a (classical) PDE solution under
various (inner and outer) flows of diffeomorphisms, it suffices to
impose conditions on the coefficients on $L$ which guarantee that
existence of nice solutions to $\partial_{t}+\,\left[^{\phi^{\mathbf{z}}}\left(L^{\psi^{\mathbf{z}}}\right)\right]_{\alpha^{\mathbf{z}}}$.
For instance, if the driving rough path $\mathbf{z}$ has $1/p$-Hölder
regularity, it is not hard to formulate conditions that guarantee
that the rough PDE solutions is an element of $C^{1/p,2+\delta}$
for suitable $\delta>0$. Indeed, it is sheer matter of conditions-book-keeping
to solve $\partial_{t}+\,\left[^{\phi^{\mathbf{z}}}\left(L^{\psi^{\mathbf{z}}}\right)\right]_{\alpha^{\mathbf{z}}}$
under (known and sharp) conditions in Hölder spaces, cf.\ \cite[Section 9, p. 140]{KrylovHoelderBook},
with $C^{1+\delta/2,2+\delta}$ regularity. Unwrapping the change
of variables will not destroy spatial regularity (thanks to sufficient
smoothness of our diffeomorphisms for fixed $t$) but will most definitely
reduce time regularity to $1/p$-Hölder. \end{remark}

We now turn to the applications. Throughout we prefer to explain the
main point rather than spelling out theorems under obvious conditions;
the reader with familiarity with rough path theory will realize that
formulating and proving such statements follows easily from well-known
results once continuous dependence in rough path norm is established
(which is done in Theorem \ref{ThmMain}).

\textbf{(Approximations)} \textit{Any} approximation result to a Brownian
motion $B$ (or more generally, a continuous semimartingale) in rough
path topology implies a corresponding (weak or strong) limit theorem
for such SPDEs: it suffices that an approximation to $B$ converges
in rough path topology; as is well known (e.g.\ \cite[Chapter 13]{friz-victoir-book}
and the references therein) examples include piecewise linear, mollifier,
and Karhunen-Loeve approximations, as well as (weak) Donsker type
random walk approximations \cite{BFH08}. The point being made, we
shall not spell out more details here.

\textbf{(Support results)} In conjunction with known support properties
of $\mathbf{B}$ (e.g.\ \cite{LeQiZh02} in $p$-variation rough
path topology or \cite{friz-lyons-stroock-06} for a conditional statement
in Hölder rough path topology) continuity of the SPDE solution as
a function of $\mathbf{B}$ immediately implies Stroock--Varadhan
type support descriptions for such SPDEs. In the linear, Brownian
noise case, approximations and support of SPDEs have been studied
in great detail \cite{MR1140746,MR1026781,MR1019596,MR1011658,MR1008230}.

\textbf{(Large deviation results)} Another application of our continuity
result\ is the ability to obtain large deviation estimates when $B$
is replaced by $\varepsilon B$ with $\varepsilon\rightarrow0$; indeed,
given the known large deviation behaviour of $\left(\varepsilon B,\varepsilon^{2}A\right)$
in rough path topology (e.g.\ \cite{LeQiZh02} in $p$-variation
and\ \cite{friz-victoir-05} in Hölder rough path topology) it suffices
to recall that large deviation principles are stable under continuous
maps.

\textbf{(SPDEs with non-Brownian noise)} Yet another benefit of our
approach is the ability to deal with SPDEs with non-Brownian and even
non-semimartingale noise. For instance, one can take $\mathbf{z}$
as (the rough path lift of) fractional Brownian motion with Hurst
parameter $1/4<H<1/2\,$, cf. \cite{coutin-qian-02} or \cite{friz-victoir-2007-gauss},
a regime which is ``rougher\textquotedblright{}\ than Brownian and
notoriously difficult to handle, or a diffusion with uniformly elliptic
generator in divergence form with measurable coefficients; see \cite{friz-victoir-subelliptic}.
Much of the above (approximations, support, large deviation) results
also extend, as is clear from the respective results in the above-cited
literature.

\textbf{(SPDEs with higher level rough paths without extra effort)}
In contrast to the approach by Gubinelli-Tindel \cite{gubinelli-tindel-2008},
no extra effort is necessary when $\mathbf{z}$ is a higher level
rough path (the prominent example being fractional Brownian motion
with $H\in(1/4,1/3]$).

\textbf{(Approximation of Wong-Zakai type with modified drift term)}
For brevity let us write $L,$ $\Lambda$ and $\Lambda_{k}$ instead
of $L(t,x,u,Du,D^{2}u)$, $\Lambda(t,x,u,Du)$ and $\Lambda_{k}(t,x,u,Du)$
in this section and consider the SPDE
\[
du+Ldt=\sum_{k}\Lambda_{k}\circ dZ^{k}\text{.}
\]
Equivalently, we write
\[
du+Ldt=\Lambda d\mathbf{Z}
\]
where $\mathbf{Z}$ denotes the Stratonovich lift of $\left(Z^{1},\dots,Z^{d}\right)$.
Recall that $\log\mathbf{Z}$ takes values in $\mathbb{R}^{d}\oplus so\left(d\right)$.
Define $\mathbf{\tilde{Z}}$ by perturbing the Lévy area as follows
\[
\log\mathbf{\tilde{Z}}:=\log\mathbf{Z}+\left(0,\Gamma_{t}\right)
\]
where $\Gamma\in C^{1\text{-var}}\left(\left[0,T\right],so\left(d\right)\right)$.
Then the solution to
\[
d\tilde{u}+Ldt=\Lambda d\mathbf{\tilde{Z}}
\]
is identified with
\[
d\tilde{u}+Ldt=\Lambda d\mathbf{\tilde{Z}+}\sum_{i,j\in\left\{ 1,\ldots,d\right\} }\left[\Lambda_{i},\Lambda_{j}\right]d\Gamma^{i,j}.
\]
The practical relevance is that one can construct approximations $\left(Z^{n}\right)$
to $Z$, each $Z^{n}$ only dependent on finitely many points, which
converge uniformly to $Z$ with the ``wrong'' area (cf.\ \cite{friz-oberhauser-2008b});
that is,
\[
\left(Z^{n},\int Z^{n}dZ^{n}\right)\rightarrow\mathbf{\tilde{Z}}
\]
in $p$-variation rough path sense, $p\in\left(2,3\right)$. The solutions
to the resulting approximations will then converge to the solution
of the ``wrong'' limiting equation
\[
d\tilde{u}+Ldt=\sum_{k=1}^{d}\Lambda_{k}\circ dZ^{k}\mathbf{+}\sum_{i,j\in\left\{ 1,\ldots,d\right\} }\left[\Lambda_{i},\Lambda_{j}\right]d\Gamma^{i,j}.
\]
The formal proof is easy; it suffices to analyze the equations (rough)
differential equations for $\left(\psi,\phi,\alpha\right)$ in presence
of area perturbation; see \cite{friz-oberhauser-2008b}, and then
identify the corresponding operators $\left[^{\phi}\left[L^{\psi}\right]\right]_{\alpha}$.
Actually, one can pick any multi-index $\gamma=\left(\gamma_{1},\ldots,\gamma_{N}\right)\in\left\{ 1,\ldots,d\right\} ^{N}$
and find (uniform) approximations such as to make the higher iterated
Lie brackets $\Lambda_{\gamma}=\left[\Lambda_{\gamma_{1}},\ldots,\left[\Lambda_{\gamma_{N-1}},\Lambda_{\gamma_{N}}\right]\ldots\right]$,
or even any linear combination of them, appear by perturbing the higher
order iterated integrals.\\
\textbf{(SPDEs with delayed Brownian input)} A interesting concrete
example of the previous discussion arises when the $2$-dimensional
driving signal is Brownian motion with its $\varepsilon$-delay; say
\[
du^{\varepsilon}+Ldt=\Lambda_{1}\circ dB_{\cdot-\varepsilon}^{\varepsilon}+\Lambda_{2}\circ dB_{\cdot}
\]
where $B_{t-\varepsilon}^{\varepsilon}:=B_{t-\varepsilon}$. Observe
that in the classical setting this can be solved (as flow) on $\left[0,\varepsilon\right]$,
then on $\left[\varepsilon,2\varepsilon\right]$ and so on. As $\varepsilon\rightarrow0,$
$(B_{t}^{\varepsilon},B_{t})$ converges in rough path sense to $\left(B_{t},B_{t}\right)$
with non-trivial area $-t/2$ (see \cite[Chapter 14]{friz-victoir-book}).
In other words, $u^{\varepsilon}\rightarrow u$ in probability and
locally uniformly where
\[
du+Ldt=\left(\Lambda_{1}+\Lambda_{2}\right)\circ dB+\left[\Lambda_{1},\Lambda_{2}\right]dt
\]
\textbf{(Robustness of the Zakai SPDE in nonlinear filtering)}\textbf{\noun{
}}Nonlinear filtering is concerned with the estimation of the conditional
law of a Markov process; to be precise consider 
\begin{align}
dX_{t} & =\mu\left(X_{t}\right)dt+V\left(X_{t}\right)dB_{t}+\sigma\left(X_{t}\right)d\tilde{B_{t}}\label{eq:filter}\\
dY_{t} & =h\left(X_{t}\right)dt+d\tilde{B}_{t}\nonumber 
\end{align}
where $B$ and $\tilde{B}$ are independent, multidimensional Brownian
motions. The goal is to compute for a given real-valued function $\varphi$
\[
\pi_{t}\left(\varphi\right)=\mathbb{E}\left[\varphi\left(X_{t}\right)\lvert\sigma\left(Y_{r},r\leq t\right)\right]
\]
and from basic principles it follows that there exists a map $\phi_{t}^{\varphi}:C\left(\left[0,T\right],\mathbb{R}^{d_{Y}}\right)\rightarrow\mathbb{R}$
such that 
\begin{equation}
\phi_{t}^{\varphi}\left(Y\lvert_{\left[0,t\right]}\right)=\pi_{t}\left(\varphi\right)\,\,\,\mathbb{P}-a.s.\label{eq:robust}
\end{equation}
As pointed out by Clark \cite{ClarkRobustness}, this classic formulation
is not justified in practice since only discrete observations of $Y$
are available and the functional $\phi_{t}^{\varphi}$ is only defined
up to nullsets on pathspace (which includes the in practice observed,
bounded variation path). He showed that in the case of uncorrelated
noise ($\sigma\equiv0$ in \eqref{eq:filter}) there exists a unique
$\overline{\phi}_{t}^{\varphi}:C\left(\left[0,T\right],\mathbb{R}^{n}\right)\rightarrow\mathbb{R}$
which is continuous in uniform norm and fulfills \eqref{eq:robust},
thus providing a version of the conditional expecation $\pi_{t}\left(\varphi\right)$
which is robust under approximations in uniform norm of the observation
$Y$. Unfortunately in the correlated noise case this is no longer
true!%
\footnote{We quote Mark Davis \cite{DavisRobustness}\begin{quote}``It must,
regretfully, be pointed out that the results for correlated noise
cannot, unlike those for the independence case, be extended to vector
observations. This is because the corresponding operators (...) do
not in general commute whereas with no noise correlation they are
multiplication operators which automatically commute.'' \end{quote}See
also the counterexample given in \cite{CrisanDiehlFrizOberhauser}.%
} In \cite{CrisanDiehlFrizOberhauser} it was recently shown that in
this case robustness still holds in a rough path sense. Now recall
that under well-known conditions \cite{MR553909,MR1135324,bain2008fundamentals},
$\pi_{t}$ can be written in the form
\begin{equation}
\pi_{t}\left(\varphi\right)=\int_{\mathbb{R}^{d_{X}}}\varphi\left(x\right)\frac{u_{t}\left(x\right)}{\int u_{t}\left(\tilde{x}\right)d\tilde{x}}dx\label{eq:pi}
\end{equation}
where $u_{t}\in L^{1}\left(\mathbb{R}^{n}\right)$ a.s.~and $\left(u_{t}\right)$
is the $L^{2}$-solution of the (dual) Zakai SPDE 
\begin{align}
du_{t} & =G^{\star}dt+\sum_{k}N_{k}u_{t}dY_{t}^{k}\nonumber \\
 & =\left(G^{\star}+\frac{1}{2}\sum_{k}N_{k}N_{k}\right)u_{t}dt+\sum_{k}N_{k}u_{t}\circ dY_{t}^{k}\label{eq:zakai}
\end{align}
with $G$ denoting the generator of the diffusion $X$, $Y$ a Brownian
motion under a measure change and 
\begin{equation}
\left(N_{k}u\right)\left(t,x\right)=\sigma_{k}^{i}\left(t,x\right)\partial_{i}u_{t}\left(x\right)+h\left(x\right).u_{t}\left(x\right).\label{eq:gradien tnoise}
\end{equation}
Using Theorem \eqref{ThmMain} in combination with Proposition \ref{prop:weak L2}
one can now construct a solution of \eqref{eq:zakai} which depends
continuously on the observation in rough path metric. The results
in \cite{CrisanDiehlFrizOberhauser} (where one works directly with
Kallianpur\textendash{}Striebel functional) suggest that one can use
the representation \ref{eq:pi} to establish robustness. However,
to this end it is necessary to show that $u_{t}^{\boldsymbol{z}}\in L^{1}$
(i.e.~a rough pathwise version of the discussion in \cite[Chapter 5]{MR1135324})
which we hope to discuss in the future in detail. Finally, let us
note that the gradient term in the noise $N_{k}u$ explains rather
intuitively why in the general, correlated noise case rough path metrics
are required: as pointed out above, correction terms are picked up
by the brackets $\left[N_{i},N_{j}\right]$ but if $\sigma=0$ then
$\left[N_{i},N_{j}\right]=0$, hence no extra terms appear. In fact,
solving \eqref{eq:zakai} for the case of $\sigma\equiv0$ reduces
via the method of characteristics to solving an SDE with commuting
vector fields which is well-known to be robust under approximations
of the driving signal (i.e.~the observation $Y$) in uniform norm.

\section{Appendix: comparison for parabolic equations}

Let $u\in\mathrm{BUC}\left([0,T]\times\mathbb{R}^{n}\right)$ be a
subsolution to $\partial_{t}+F$; that is, 
\[
\partial_{t}u+F\left(t,x,u,Du,D^{2}u\right)\leq0
\]
 if $u$ is smooth and with the usual viscosity definition otherwise.
Similarly, let $v\in\mathrm{BUC}\left([0,T]\times\mathbb{R}^{n}\right)$
be a supersolution.

\begin{theorem}\label{thm:comp}Assume condition (3.14) of the User's
Guide \cite{MR1118699UserGuide}, uniformly in $t$, together with
uniform continuity of $F=F\left(t,x,r,p,X\right)$ whenever $r,p,X$
remain bounded. Assume also a (weak form of) properness: there exists
$C$ such that
\begin{equation}
F\left(t,x,r,p,X\right)-F\left(t,x,s,p,X\right)\geq C\left(r-s\right)\,\,\,\forall r\leq s,\label{WeakProper}
\end{equation}
and for all $t\in\left[0,T\right]$ and all $\, x,p,X$. Then comparison
holds. That is,
\[
u\left(0,\cdot\right)-v\left(0,\cdot\right)\implies u\leq v\text{ on }[0,T]\times\mathbb{R}^{n}.
\]
\end{theorem}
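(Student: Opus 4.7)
The strategy is to reduce the weak-properness hypothesis to strict properness by an exponential rescaling in $t$, and then to quote the classical parabolic comparison theorem. Fix $\mu > \max(0,-C)$ and set $\tilde{u}(t,x) := e^{-\mu t} u(t,x)$, $\tilde{v}(t,x) := e^{-\mu t} v(t,x)$. A standard test-function computation shows that $\tilde{u}$ is a sub- and $\tilde{v}$ a super-solution of $\partial_{t}w + \tilde{F}(t,x,w,Dw,D^{2}w) = 0$, where
\[
\tilde{F}(t,x,r,p,X) := \mu r + e^{-\mu t}\, F\!\left(t,x,e^{\mu t}r,e^{\mu t}p,e^{\mu t}X\right).
\]
Both $\tilde{u}, \tilde{v}$ remain in $\mathrm{BUC}([0,T]\times\mathbb{R}^{n})$, and the initial ordering $u(0,\cdot)\leq v(0,\cdot)$ is unchanged.

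Next, I verify that $\tilde{F}$ fits the hypotheses of a standard comparison theorem. Degenerate ellipticity in $X$ is immediate from that of $F$. The weak properness of $F$ (read with $r\geq s$, as in assumption \ref{main_assumptions}) yields, for $r \geq s$,
\[
\tilde{F}(t,x,r,p,X) - \tilde{F}(t,x,s,p,X) \geq \mu(r-s) + e^{-\mu t}\cdot C \cdot e^{\mu t}(r-s) = (\mu+C)(r-s),
\]
so $\tilde{F}$ is \emph{strictly} proper with positive modulus $\mu+C$. Uniform continuity of $\tilde{F}$ on bounded sets is inherited from that of $F$ since $e^{\pm\mu t}$ is bounded on $[0,T]$. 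Since the $(x,y)$-dependence of $\tilde{F}$ passes through $F$ only after rescaling $p$ and $X$ by the bounded factor $e^{\mu t}$, the structure condition (3.14) of \cite{MR1118699UserGuide} for $F$ (with modulus $\omega$) transfers to $\tilde{F}$ (with new modulus $\tilde{\omega}(s)=e^{\mu T}\omega(e^{\mu T}s)$).

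At this point one is in the classical setting of \cite[Theorem 8.2]{MR1118699UserGuide} (see also \cite[Section V]{MR2179357FS}): comparison for $\tilde{u},\tilde{v}$, hence for $u,v$, now follows from the standard doubling-of-variables argument. Namely, assuming for contradiction that $\sup(\tilde{u}-\tilde{v}) > 0$ on $[0,T]\times\mathbb{R}^{n}$, one maximises the penalised functional
\[
\Phi(t,x,y) = \tilde{u}(t,x) - \tilde{v}(t,y) - \tfrac{\alpha}{2}|x-y|^{2} - \beta\!\left(|x|^{2}+|y|^{2}\right) - \tfrac{\eta}{T-t},
\]
applies the parabolic Crandall-Ishii lemma at the maximiser, and lets $\alpha\to\infty$, then $\beta\to 0$, then $\eta\to 0$; strict properness of $\tilde{F}$ together with (3.14) then yields the required contradiction. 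The only non-routine point, and the step requiring care because $\mathbb{R}^{n}$ is unbounded, is ensuring that $\Phi$ attains its supremum on a compact set: this is precisely what the $\mathrm{BUC}$-bound on $\tilde{u},\tilde{v}$ combined with the $\beta$-penalty provides. Everything else is entirely routine and we refer to \cite{MR1118699UserGuide,MR2179357FS} for the details.
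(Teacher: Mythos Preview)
Your proposal is correct and follows essentially the same route as the paper: both reduce to the classical proper case via the exponential rescaling $\tilde u=e^{-\mu t}u$ and then invoke the standard doubling-of-variables comparison argument from \cite{MR1118699UserGuide,MR2179357FS}. The paper's proof is terser (it simply takes $\mu=C$ and asserts that ``the standard arguments then apply with minimal adaptations''), whereas you additionally spell out why degenerate ellipticity, uniform continuity and condition~(3.14) survive the rescaling, and sketch the penalised functional; these details are welcome but do not constitute a different approach.
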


\begin{proof} It is easy to see that $\tilde{u}=e^{-Ct}u$ is a subsolution
to a problem which is proper in the usual sense; that is (\ref{WeakProper})
holds with $C=0$ which is tantamount to require that $F$ is non-decreasing
in $r$. The standard arguments (e.g. \cite{MR1118699UserGuide} or
the appendix of \cite{MR2765508} or also \cite{DiehlFrizOberhauser})
then apply with minimal adaptations. \end{proof}

\begin{corollary} Under the assumptions of the theorem above let
$u,v$ be two solutions, with initial data $u_{0},v_{0}$ respectively.
Then 
\[
\left\vert u-v\right\vert _{\infty;\mathbb{R}^{n}\times\left[0,T\right]}\leq e^{CT}\left\vert u_{0}-v_{0}\right\vert _{\infty;\mathbb{R}^{n}}
\]
with $C$ being the constant from $\left(\ref{WeakProper}\right)$.
\end{corollary}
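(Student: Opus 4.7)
The plan is to derive the contraction as a consequence of Theorem \ref{thm:comp} via a standard exponential-shift trick. Setting $M_{0} := \left\vert u_{0} - v_{0}\right\vert _{\infty;\mathbb{R}^{n}}$ and $C$ as in \eqref{WeakProper}, I would introduce the perturbed function
\[
\tilde{v}(t,x) := v(t,x) + M_{0}\,e^{Ct},
\]
which trivially satisfies $\tilde{v}(0,\cdot) \geq u_{0}(\cdot) \geq u(0,\cdot)$. The main step will be to verify that $\tilde{v}$ is itself a viscosity supersolution of $\partial_{t} + F = 0$ on $[0,T]\times\mathbb{R}^{n}$.

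For this, I would take $\varphi \in C^{1,2}$ touching $\tilde{v}$ from below at some $(t_{0},x_{0})$. Then $\varphi(t,x) - M_{0}e^{Ct}$ touches $v$ from below at the same point, and the supersolution property of $v$ (together with the fact that $M_{0}e^{Ct}$ has vanishing spatial derivatives) gives
\[
\partial_{t}\varphi(t_{0},x_{0}) - C M_{0}e^{Ct_{0}} + F\bigl(t_{0},x_{0},v(t_{0},x_{0}),D\varphi(t_{0},x_{0}),D^{2}\varphi(t_{0},x_{0})\bigr) \geq 0.
\]
An application of the weak properness \eqref{WeakProper} to the pair $r = v(t_{0},x_{0})$, $s = \tilde{v}(t_{0},x_{0}) = v(t_{0},x_{0}) + M_{0}e^{Ct_{0}}$ lets one replace $F(\cdot,v,\cdot)$ by $F(\cdot,\tilde{v},\cdot)$ at the cost of a term of size $CM_{0}e^{Ct_{0}}$; the exponential rate in the perturbation is chosen precisely so that this cost cancels the $\partial_{t}$ contribution, leaving the supersolution inequality for $\tilde{v}$.

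Theorem \ref{thm:comp} then yields $u \leq \tilde{v}$, so $u - v \leq M_{0}e^{CT}$ on $[0,T]\times\mathbb{R}^{n}$. A completely symmetric argument with the roles of $u$ and $v$ interchanged gives $v - u \leq M_{0}e^{CT}$, and combining the two produces the claimed contraction. The only mildly delicate point is the sign bookkeeping in the properness step, to ensure that the exponential rate of the perturbation is matched exactly to $C$; once this is arranged, no further obstacle arises and the result follows by direct appeal to the comparison theorem.
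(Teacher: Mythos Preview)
Your proposal is correct and is essentially the paper's argument in a slightly different packaging. The paper first applies the multiplicative change of variables $\tilde u=e^{-Ct}u$, $\tilde v=e^{-Ct}v$ to reduce to a proper problem, then observes that $\tilde v+\left\vert u_{0}-v_{0}\right\vert_{\infty}$ is a supersolution and applies comparison; undoing the transform turns the added constant into precisely your time-dependent perturbation $M_{0}e^{Ct}$, so the two routes coincide.
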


\begin{proof} Use again the transformation $\tilde{u}=e^{-Ct}u,\tilde{v}=e^{-Ct}v$.
Then $\tilde{v}+\left\vert u_{0}-v_{0}\right\vert _{\infty;\mathbb{R}^{n}}$
is a supersolution of a problem to which standard comparison arguments
apply; hence, 
\[
\tilde{u}\leq\tilde{v}+\left\vert u_{0}-v_{0}\right\vert _{\infty;\mathbb{R}^{n}}.
\]
Applying the same reasoning to $\tilde{u}+\left\vert u_{0}-v_{0}\right\vert _{\infty;\mathbb{R}^{n}}$
and finally undoing the transformation gives the result. \end{proof}

\bibliographystyle{plain}
\bibliography{/homes/stoch/oberhaus/Dropbox/projects/BibteX/roughpaths,/home/hd/Dropbox/projects/BibteX/roughpaths}

\def\cprime{$'$}
\begin{thebibliography}{10}

\bibitem{bain2008fundamentals}
A.~Bain and D.~Cri{\c{s}}an.
\newblock {\em Fundamentals of stochastic filtering}.
\newblock Applications of mathematics. Springer, 2008.

\bibitem{BFH08}
Emmanuel Breuillard, Peter Friz, and Martin Huesmann.
\newblock From random walks to rough paths.
\newblock {\em {P}roc. {A}{M}{S}}, pages 3487--3496, 2009.

\bibitem{MR1313027}
Zdzis{\l}aw Brze{\'z}niak and Franco Flandoli.
\newblock Almost sure approximation of {W}ong-{Z}akai type for stochastic
  partial differential equations.
\newblock {\em Stochastic Process. Appl.}, 55(2):329--358, 1995.

\bibitem{CFG}
M.~Caruana, P.~Friz, and L~Gyurko.
\newblock Rough differential equations with time-dependent vector fields.
\newblock {\em Working paper}, 2012.

\bibitem{caruana-2008}
Michael Caruana and Peter Friz.
\newblock Partial differential equations driven by rough paths.
\newblock {\em Journal of Differential Equations}, 247(1):140 -- 173, 2009.

\bibitem{MR2765508}
Michael Caruana, Peter Friz, and Harald Oberhauser.
\newblock A (rough) pathwise approach to a class of non-linear stochastic
  partial differential equations.
\newblock {\em Ann. Inst. H. Poincar\'e Anal. Non Lin\'eaire}, 28(1):27--46,
  2011.

\bibitem{ClarkRobustness}
J.M.C. Clark.
\newblock The design of robust approximations to stochastic differential
  equations in nonlinear filtering.
\newblock In J.K. Skwirzynsky, editor, {\em Communication Systems in Random
  Processes Theory}. Sijthoff,Nordhoff, 1978.

\bibitem{coutin-qian-02}
Laure Coutin and Zhongmin Qian.
\newblock Stochastic analysis, rough path analysis and fractional {B}rownian
  motions.
\newblock {\em Probab. Theory Related Fields}, 122(1):108--140, 2002.

\bibitem{MR1118699UserGuide}
Michael~G. Crandall, Hitoshi Ishii, and Pierre-Louis Lions.
\newblock User's guide to viscosity solutions of second order partial
  differential equations.
\newblock {\em Bull. Amer. Math. Soc. (N.S.)}, 27(1):1--67, 1992.

\bibitem{CrisanDiehlFrizOberhauser}
Dan Crisan, Joscha Diehl, Peter Friz, and Harald Oberhauser.
\newblock Robust filtering: multdimensional noise and multidimensional
  observation.
\newblock {\em Annals of Applied Probability (in press)}, 2012.

\bibitem{DavisRobustness}
Mark Davis.
\newblock Pathwise nonlinear filtering with noise correlation.
\newblock In Dan Crisan and Boris Rozovski{\u\i}, editors, {\em Oxford Handbook
  of Nonlinear Filtering}. Oxford University Press, 2010.

\bibitem{deya2011non}
A.~Deya, M.~Gubinelli, and S.~Tindel.
\newblock Non-linear rough heat equations.
\newblock {\em Probability Theory and Related Fields}, pages 1--51, 2011.

\bibitem{RBSDE}
Joscha Diehl and Peter Friz.
\newblock Backward stochastic differential equations with rough drivers.
\newblock {\em Annals of Probability}, 40:1715--1758, 2012.

\bibitem{DiehlFrizOberhauser}
Joscha Diehl, Peter Friz, and Harald Oberhauser.
\newblock Parabolic comparison revisited and applications.
\newblock 2011.
\newblock {S}ubmitted.

\bibitem{MR2179357FS}
Wendell~H. Fleming and H.~Mete Soner.
\newblock {\em Controlled {M}arkov processes and viscosity solutions},
  volume~25 of {\em Stochastic Modelling and Applied Probability}.
\newblock Springer, New York, second edition, 2006.

\bibitem{friz-lyons-stroock-06}
P.~Friz, T.~Lyons, and D.~Stroock.
\newblock L\'evy's area under conditioning.
\newblock {\em Ann. Inst. H. Poincar\'e Probab. Statist.}, 42(1):89--101, 2006.

\bibitem{friz-victoir-2007-gauss}
P.~Friz and N.~Victoir.
\newblock Differential equations driven by gaussian signals.
\newblock In {\em Annales de l'Institut Henri Poincar{\'e}, Probabilit{\'e}s et
  Statistiques}, volume~46, pages 369--413. Institut Henri Poincar{\'e}, 2010.

\bibitem{friz-oberhauser-2008b}
Peter Friz and Harald Oberhauser.
\newblock Rough path limits of the {Wo}ng-{Z}akai type with a modified drift
  term.
\newblock {\em J. Funct. Anal.}, 256:3236--3256, 2009.

\bibitem{friz-victoir-05}
Peter Friz and Nicolas Victoir.
\newblock Approximations of the {B}rownian rough path with applications to
  stochastic analysis.
\newblock {\em Ann. Inst. H. Poincar\'e Probab. Statist.}, 41(4):703--724,
  2005.

\bibitem{friz-victoir-subelliptic}
Peter Friz and Nicolas Victoir.
\newblock On uniformly subelliptic operators and stochastic area.
\newblock {\em Probab. Theory Related Fields}, 142(3-4):475--523, 2008.

\bibitem{friz-05}
Peter~K. Friz.
\newblock Continuity of the {I}t\^o-map for {H}\"older rough paths with
  applications to the support theorem in {H}\"older norm.
\newblock In {\em Probability and partial differential equations in modern
  applied mathematics}, volume 140 of {\em IMA Vol. Math. Appl.}, pages
  117--135. Springer, New York, 2005.

\bibitem{friz-victoir-book}
Peter~K. Friz and Nicolas~B. Victoir.
\newblock {\em Multidimensional stochastic processes as rough paths: theory and
  applications}.
\newblock Cambridge Studies in Advanced Mathematics. Cambridge University
  Press, Cambridge, 2010.

\bibitem{gubinelli-tindel-2008}
M.~Gubinelli and S.~Tindel.
\newblock Rough evolution equations.
\newblock {\em Ann. Prob. 38, pp. 1-75}, 2010.

\bibitem{MR1011658}
I.~Gy{\"o}ngy.
\newblock The stability of stochastic partial differential equations and
  applications. {I}.
\newblock {\em Stochastics Stochastics Rep.}, 27(2):129--150, 1989.

\bibitem{MR1019596}
I.~Gy{\"o}ngy.
\newblock The stability of stochastic partial differential equations and
  applications. {T}heorems on supports.
\newblock In {\em Stochastic partial differential equations and applications,
  {II} ({T}rento, 1988)}, volume 1390 of {\em Lecture Notes in Math.}, pages
  91--118. Springer, Berlin, 1989.

\bibitem{MR1008230}
I.~Gy{\"o}ngy.
\newblock The stability of stochastic partial differential equations. {II}.
\newblock {\em Stochastics Stochastics Rep.}, 27(3):189--233, 1989.

\bibitem{MR1026781}
I.~Gy{\"o}ngy.
\newblock The approximation of stochastic partial differential equations and
  applications in nonlinear filtering.
\newblock {\em Comput. Math. Appl.}, 19(1):47--63, 1990.

\bibitem{MR1140746}
Istv{\'a}n Gy{\"o}ngy.
\newblock On stochastic partial differential equations. {R}esults on
  approximations.
\newblock In {\em Topics in stochastic systems: modelling, estimation and
  adaptive control}, volume 161 of {\em Lecture Notes in Control and Inform.
  Sci.}, pages 116--136. Springer, Berlin, 1991.

\bibitem{MR2268661}
Istv{\'a}n Gy{\"o}ngy and Anton Shmatkov.
\newblock Rate of convergence of {W}ong-{Z}akai approximations for stochastic
  partial differential equations.
\newblock {\em Appl. Math. Optim.}, 54(3):315--341, 2006.

\bibitem{MR1661766}
N.~V. Krylov.
\newblock An analytic approach to {SPDE}s.
\newblock In {\em Stochastic partial differential equations: six perspectives},
  volume~64 of {\em Math. Surveys Monogr.}, pages 185--242. Amer. Math. Soc.,
  Providence, RI, 1999.

\bibitem{MR557763}
N.~V. Krylov and B.~L. Rozovski{\u\i}.
\newblock It\^o equations in {B}anach spaces and strongly parabolic stochastic
  partial differential equations.
\newblock {\em Dokl. Akad. Nauk SSSR}, 249(2):285--289, 1979.

\bibitem{KrylovAnAp}
Nicolai Krylov.
\newblock An analytical approach to {S}{P}{D}{E}s.
\newblock In Ren{\'e} Carmona and Boris Rozovski{\u\i}, editors, {\em
  Stochastic partial differential equations: six perspectives}, pages 185 --
  242. {A}{M}{S}, 1999.

\bibitem{KrylovHoelderBook}
Nicolai~V. Krylov.
\newblock {\em Lectures on Elliptic and Parabolic Equations in Sobolev Spaces}.
\newblock Graduate Studies in Mathematics, 2008.

\bibitem{LeQiZh02}
M.~Ledoux, Z.~Qian, and T.~Zhang.
\newblock Large deviations and support theorem for diffusion processes via
  rough paths.
\newblock {\em Stochastic Process. Appl.}, 102(2):265--283, 2002.

\bibitem{MR1799099}
Pierre-Louis Lions and Panagiotis~E. Souganidis.
\newblock Fully nonlinear stochastic pde with semilinear stochastic dependence.
\newblock {\em C. R. Acad. Sci. Paris S\'er. I Math.}, 331(8):617--624, 2000.

\bibitem{lyons-98}
Terry Lyons.
\newblock Differential equations driven by rough signals.
\newblock {\em Rev. Mat. Iberoamericana}, 14(2):215--310, 1998.

\bibitem{lyons-qian-02}
Terry Lyons and Zhongmin Qian.
\newblock {\em System {C}ontrol and {R}ough {P}aths}.
\newblock Oxford University Press, 2002.
\newblock Oxford Mathematical Monographs.

\bibitem{MR2314753}
Terry~J. Lyons, Michael Caruana, and Thierry L{\'e}vy.
\newblock {\em Differential equations driven by rough paths}, volume 1908 of
  {\em Lecture Notes in Mathematics}.
\newblock Springer, Berlin, 2007.
\newblock Lectures from the 34th Summer School on Probability Theory held in
  Saint-Flour, July 6--24, 2004, With an introduction concerning the Summer
  School by Jean Picard.

\bibitem{MR553909}
E.~Pardoux.
\newblock Stochastic partial differential equations and filtering of diffusion
  processes.
\newblock {\em Stochastics}, 3(2):127--167, 1979.

\bibitem{MR2329435}
Claudia Pr{\'e}v{\^o}t and Michael R{\"o}ckner.
\newblock {\em A concise course on stochastic partial differential equations},
  volume 1905 of {\em Lecture Notes in Mathematics}.
\newblock Springer, Berlin, 2007.

\bibitem{MR1135324}
B.~L. Rozovski{\u\i}.
\newblock {\em Stochastic evolution systems}, volume~35 of {\em Mathematics and
  its Applications (Soviet Series)}.
\newblock Kluwer Academic Publishers Group, Dordrecht, 1990.
\newblock Linear theory and applications to nonlinear filtering, Translated
  from the Russian by A. Yarkho.

\bibitem{stein1970singular}
E.M. Stein.
\newblock {\em Singular integrals and differentiability properties of
  functions}.
\newblock Princeton mathematical series. Princeton University Press, 1970.

\bibitem{TeichmannRPDE}
Josef Teichmann.
\newblock Another approach to some rough and stochastic partial differential
  equations.
\newblock 2009.
\newblock arXiv:0908.2814.

\bibitem{MR2267702}
Gianmario Tessitore and Jerzy Zabczyk.
\newblock Wong-{Z}akai approximations of stochastic evolution equations.
\newblock {\em J. Evol. Equ.}, 6(4):621--655, 2006.

\bibitem{MR1390565}
Krystyna Twardowska.
\newblock Wong-{Z}akai approximations for stochastic differential equations.
\newblock {\em Acta Appl. Math.}, 43(3):317--359, 1996.

\end{thebibliography}

\end{document}